\documentclass[11pt,reqno,oneside]{amsart}
\usepackage{graphicx, subfigure}
\usepackage{amssymb}
\usepackage{epstopdf}
\usepackage{amssymb}
\usepackage[a4paper, total={6in, 9.1in}]{geometry}

\usepackage{bm}
\usepackage{amsmath,amsfonts,amsthm,mathrsfs,amssymb,cite}
\usepackage[usenames]{color}

\usepackage{graphics}
\usepackage{framed}
\usepackage{appendix}

\usepackage{enumerate}
\usepackage{hyperref}
\usepackage{doi}
\usepackage{tikz}
\usepackage{xcolor}
\hypersetup{
  colorlinks,
  linkcolor={blue!80!black},
  urlcolor={blue!80!black},
  citecolor={blue!80!black}
}
\usepackage{zref-clever}
\zcsetup{cap,abbrev=false}
\usetikzlibrary{arrows.meta,calc}

\AddToHook{env/theorem/begin}{%
\zcsetup{countertype={theorem=theorem}}}
\AddToHook{env/lemma/begin}{%
\zcsetup{countertype={theorem=lemma}}}
\AddToHook{env/proposition/begin}{%
\zcsetup{countertype={theorem=proposition}}}
\AddToHook{env/corollary/begin}{%
\zcsetup{countertype={theorem=corollary}}}
\AddToHook{env/definition/begin}{%
\zcsetup{countertype={theorem=definition}}}
\AddToHook{env/assumption/begin}{%
\zcsetup{countertype={theorem=assumption}}}
\AddToHook{env/example/begin}{%
\zcsetup{countertype={theorem=example}}}
\AddToHook{env/remark/begin}{%
\zcsetup{countertype={theorem=remark}}}
\AddToHook{env/exo/begin}{%
\zcsetup{countertype={theorem=exercise}}}

\newtheorem{theorem}{Theorem}[section]
\newtheorem{lemma}[theorem]{Lemma}
\newtheorem{proposition}[theorem]{Proposition}

\theoremstyle{definition}

\newtheorem*{notation*}{Notation}
\newtheorem{assumption}[theorem]{Assumption}
\zcRefTypeSetup{assumption}{
Name-sg = Assumption ,
name-sg = assumption ,
Name-pl = Assumptions ,
name-pl = assumptions ,
}

\theoremstyle{remark}
\newtheorem{remark}[theorem]{Remark}

\newcommand{\thmref}[1]{\hyperref[{#1}]{\nameref*{#1} \zcref*{#1}}}

\newcounter{saveeqn}

\newcommand{\eqnref}[1]{(\ref {#1})}

\DeclareMathAlphabet{\mathbbold}{U}{bbm}{m}{n}

\DeclareFontFamily{U}{wncy}{}
    \DeclareFontShape{U}{wncy}{m}{n}{<->wncyr10}{}
    \DeclareSymbolFont{mcy}{U}{wncy}{m}{n}
\DeclareMathSymbol{\Sh}{\mathord}{mcy}{"58} 

\makeatletter
\DeclareRobustCommand\widecheck[1]{{\mathpalette\@widecheck{#1}}}
\def\@widecheck#1#2{%
    \setbox\z@\hbox{\m@th$#1#2$}%
    \setbox\tw@\hbox{\m@th$#1%
       \widehat{%
          \vrule\@width\z@\@height\ht\z@
          \vrule\@height\z@\@width\wd\z@}$}%
    \dp\tw@-\ht\z@
    \@tempdima\ht\z@ \advance\@tempdima2\ht\tw@ \divide\@tempdima\thr@@
    \setbox\tw@\hbox{%
       \raise\@tempdima\hbox{\scalebox{1}[-1]{\lower\@tempdima\box
\tw@}}}%
    {\ooalign{\box\tw@ \cr \box\z@}}}
\makeatother

\newcommand{\ba}{\begin{array}}
\newcommand{\ea}{\end{array}}
\newcommand{\bsa}{\begin{subarray}}
\newcommand{\esa}{\end{subarray}}
\newcommand{\dvg}{{\rm div}\,}

\newcommand{\bea}{\begin{eqnarray*}}
\newcommand{\eea}{\end{eqnarray*}}
\newcommand{\bean}{\begin{eqnarray}}
\newcommand{\eean}{\end{eqnarray}}

\newcommand{\tendvers}[2]{\underset{{#1}\rightarrow {#2}}{\longrightarrow}}

\newcommand{\setdef}[2]{\left\lbrace {#1} \; \middle| \; {#2} \right\rbrace}

\newcommand{\R}{\mathbb{R}}
\newcommand{\C}{\mathbb{C}}

\newcommand{\N}{\mathbb{N}}

\renewcommand{\S}{\mathbb{S}}

\newcommand{\Sw}{\mathcal{S}}
\newcommand{\Cont}{\mathcal{C}}

\newcommand{\Lop}{\mathscr{L}}

\newcommand{\n}{\nabla}
\newcommand{\dist}{{\rm dist}}

\renewcommand{\Re}{{\rm Re}}

\renewcommand{\a}{\alpha}
\renewcommand{\b}{\beta}
\renewcommand{\d}{\delta}
\newcommand{\e}{\varepsilon}
\newcommand{\g}{\gamma}
\newcommand{\vf}{\varphi}
\newcommand{\lm}{\lambda}

\renewcommand{\t}{\theta}

\newcommand{\p}{\partial}
\renewcommand{\O}{\Omega}
\newcommand{\ds}{\displaystyle}

\title[stability for polyhedral inclusions]{On the Stability of Inverse Conductivity Problem for Polyhedral Inclusions under a Single Measurement}

\author{Chun-Hsiang Tsou}
\address{National Central University, Taoyuan, Taiwan}
\email{chtsou@math.ncu.edu.tw, chun.hsiang.tsou@gmail.com}

\date{} 
\begin{document}

\begin{abstract}

In this paper, we study the stability of the inverse conductivity problem of determining a convex polyhedral inclusion embedded in a homogeneous isotropic medium from a single boundary measurement. The main tools in our analysis are singularity decomposition for elliptic equations in non-smooth domains, propagation of smallness, and microlocal analysis. Combining these tools, we establish a double-logarithmic stability estimate for the Hausdorff distance between inclusions in terms of the measurement error.

\noindent{\bf Keywords:} Calder\'on's problem, polyhedral inclusion, double-logarithmic stability, singularity decomposition, propagation of smallness, single measurement.

\noindent{\bf 2020 Mathematics Subject Classification:} Primary 35R30, secondary 35B35, 31B20

\end{abstract}

\thanks{This research was partially supported by National Science and Technology Council through grant number NSTC 113-2115-M-008-003-MY2.}

\maketitle

\section{Introduction}

We first present the mathematical formulation of the problem and the principal result. We then briefly situate our work within the existing literature. Before presenting the proofs, we describe the technical ingredients and the structure of the paper.

\subsection{Mathematical formulation and main result}

In this paper, we consider the classical Calder\'on problem with the following conductivity equation.

\begin{align}\label{eq:calderon}
    \left\lbrace  \ba{lcc} \dvg(\sigma \n u)=0 & {\rm in}& \O, \\ u=f & {\rm on} & \p\O, \ea \right.
\end{align}
 where $\O$ denotes a bounded Lipschitz domain in $\R^3$.

 We focus on the case where the conductivity distribution $\sigma$ is piecewise constant with a jump across an interface. More precisely, we assume that 
 \begin{align*}
        \sigma=1+(k-1)\chi_D,
 \end{align*}
where $k\in \R_+$ is a positive constant representing the conductivity contrast between the inclusion and the surrounding medium, and $\chi_D$ denotes the indicator function of the subdomain $D\Subset \O$, which takes the value $1$ inside $D$ and $0$ outside. We are particularly interested in the case where $D$ has a special geometric shape, namely a convex polyhedron. On the boundary $\p\O$, we impose a non-constant Dirichlet condition $f\in H^{1/2}(\p\O)$. Standard elliptic theory implies that the forward problem \eqref{eq:calderon} is well posed. Equivalently, this equation can be rewritten with transmission conditions as follows.

 \begin{align}\label{eq:transmission}
    \left\lbrace \ba{lcc} \Delta u=0 & {\rm in} & D\cup (\O\setminus \overline{D}) \\ u|_-=u|_+ & {\rm on} & \p D \\ k\p_\nu u|_-=\p_\nu u|_+ & {\rm on} & \p D \\ u=f & {\rm on} & \p\O. \ea \right.
 \end{align}
 Here, the subscripts $-$ and $+$ represent the limits from the interior and exterior of $D$ respectively. The normal derivative $\p_\nu$ is defined as $\p_\nu u=\n u\cdot \nu$ where $\nu$ is the unit outward normal vector on $\p D$.
 
The inverse problem considered here is to determine the unknown inclusion $D$ from a single partial boundary measurement $\p_\nu u|_{\Gamma_0}$. Here, $\Gamma_0$ is an open subset of $\p\O$ representing the accessible part of the boundary where measurements are available. The main result of this paper is a quantitative estimate of the reconstruction error for $D$ in terms of the measurement error.

Before stating the main result rigorously, we clarify the geometric framework of our analysis. Many parameters may influence the stability estimate, but we focus only on its dependence on the measurement error. We therefore introduce the following assumptions to fix the geometric setting and avoid unnecessary complications. All these parameters are treated as {\it a priori} data, meaning that they are known in advance and are not part of the stability estimate.

 \medskip

 \begin{assumption}[Geometric]\label{assumption:geometric}
    Throughout this paper, we impose the following geometric conditions, and the corresponding parameters are regarded as {\it a priori} data.
 \begin{itemize}
    \item The Lipschitz domain $\O$ is arcwise connected, with Lipschitz constant $L_\O>0$.
    \item The conductivity contrast $k>1$, i.e., the inclusion is more conductive than the surrounding medium.
    \item The inclusions are at distance at least $\d_0>0$ from the boundary $\p\O$, i.e., $D\subset \O_{\d_0}$ where $\O_{\d_0}:=\setdef{x\in\O}{\dist(x,\p\O)>\d_0}$.
    \item The interior domain $\O_{\d_0}$ is also arcwise connected.
    \item Each inclusion $D$ is a convex polyhedron with edges of length at least $l_0>0$.
    \item The openings of all vertices are assumed to lie in a fixed range $[\vartheta_m,\vartheta_M]\subset (0,2\pi)$.
    \item The openings of all edges are assumed to lie in a fixed range $[a_m,a_M]\subset (0,\pi)$.
    \item The Dirichlet boundary condition $f\in H^{1/2}(\p\O)$ is nontrivial, with $\|f\|_{H^{1/2}(\p\O)}>0$.
 \end{itemize}
 \end{assumption}
 
 \medskip

 \noindent{\bf Notations.} 
 \begin{itemize}
    \item We will use the symbols ``$\lesssim$'' and ``$\gtrsim$'' to denote the inequality up to a multiplicative constant depending only on {\it a priori} data.
    \item We denote by $d_H(D,D')$ the Hausdorff distance between $D$ and $D'$, which is defined as follows,
    \begin{align*}
        d_H(D,D')=\max\left\lbrace \sup_{x\in D}\dist(x,D'),\; \sup_{x'\in D'}\dist(x',D) \right\rbrace.
    \end{align*}
 \end{itemize}  

 \medskip

 The following theorem is the principal result of this paper.
 \begin{theorem}\label{th:main}
    Assume the geometrical and analytical conditions \zcref{assumption:geometric,assumption:singularity} hold. Let $u$ and $u'$ be solutions to \eqref{eq:calderon} with respect to $D$ and $D'$. We suppose that $\|\p_\nu(u-u')\|_{H^{-1/2}(\Gamma_0)}\leq \e$ for $\e>0$ being sufficiently small. Then, there exists positive constants $\kappa$ and $C$ depending only on {\it a priori} data such that
    \begin{align}\label{eq:stable}
        d_H(D,D')\leq C \left(\ln|\ln \e|\right)^{-\kappa}.
    \end{align}
 \end{theorem}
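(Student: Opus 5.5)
Our plan is the standard Alessandrini--Di~Cristo strategy for single-measurement inclusion stability, adapted to polyhedral corners: we derive an upper bound from unique continuation and a lower bound from corner singularities. Set $\delta=d_H(D,D')$ and let $w=u-u'$, which is harmonic in the unbounded-type component $G$ of $\O\setminus\overline{D\cup D'}$ connected to $\p\O$. Since $D,D'$ are convex, $\O\setminus\overline{D\cup D'}$ is connected. By convexity we may also find, without loss of generality, a vertex or an edge point $x_0$ of $D$ (or of $D'$) with $\dist(x_0,D')\gtrsim\delta$. The polyhedral structure, together with the lower bounds on edge lengths and openings, guarantees that the extremal point realizing the Hausdorff distance can be taken on the one-skeleton, up to a constant.

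\emph{Step 1 (smallness near $x_0$).} Since $w$ has zero Dirichlet data and Neumann data of size $\e$ on $\Gamma_0$, Cauchy-data stability near $\Gamma_0$ gives $\|w\|$ small in a small ball in $\O\setminus\overline{\O_{\d_0}}$. We then propagate this smallness by a chain of three-sphere inequalities along a path in $G$ up to a point $z$ at distance $\sim r$ from $x_0$, with $r\lesssim\delta$. The path leaves a cone of aperture controlled by $\vartheta_m,a_m$ near the corner. Because the path must pass near the nonsmooth boundaries of $D\cup D'$, the number of balls grows like $|\ln r|$ or a power of $1/r$. This yields $|\n w(z)|\lesssim \exp(-c\, r^{-\gamma})\,$-type or $\e^{\,\theta^{N(r)}}$ bounds, and after optimization the smallness reads roughly $\|w\|_{L^\infty(B_r(z))}\lesssim |\ln\e|^{-c}$. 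The loss of one logarithm here is what later produces the double logarithm.

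\emph{Step 2 (lower bound via singularity).} Near the vertex or edge $x_0$, $u$ is not smooth: by the singularity decomposition (\zcref{assumption:singularity}, stated later, which we take to guarantee a nonvanishing leading singular coefficient with a quantitative lower bound depending on a priori data and on $f$), we have $u=c\,\rho^{\lambda}\Phi(\omega)+\text{smoother}$ with $\lambda<1$ or a nonzero gradient jump. Meanwhile $u'$ is harmonic near $x_0$ since $\dist(x_0,D')\gtrsim\delta$, with interior estimates on a ball of radius $\sim\delta$. Thus on the sphere of radius $r\ll\delta$ around $x_0$, restricted to the exterior of $D$, $\n w=\n u-\n u'$ has size at least $|c|r^{\lambda-1}-C$, or, in the edge case, displays a directional jump not matchable by a smooth function. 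Microlocal analysis would enter here, ensuring that the wave-front/singular part cannot be cancelled by the smooth $u'$. Quantitatively, $\|\n w\|_{L^2(B_r(z))}\gtrsim r^{\beta}$ for some $\beta$ and for $r\le c\delta$.

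\emph{Step 3 (combine).} Setting $r=c\delta$ and comparing the two steps gives $\delta^{\beta}\lesssim |\ln\e|^{-c}$ only if the propagation cost is polynomial. Honestly, the chain length depends on $\delta$ exponentially near corners, which gives $\delta^{\beta}\lesssim \exp(-\ldots)$ inverted as $\delta\lesssim(\ln|\ln\e|)^{-\kappa}$. Choosing parameters optimally then yields \eqref{eq:stable}.

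The main obstacle is Step 2: obtaining a \emph{uniform quantitative} lower bound on the singular coefficient at a vertex or edge of a convex polyhedron in terms of a priori data, and showing that it survives subtraction of the smooth $u'$ at scale $\delta$. We would first try to reduce the vertex case to an edge point (where the two-dimensional Mellin analysis with exponent $\pi/\alpha$-type from the transmission problem is explicit), and to use the analytical assumption to control the coefficient from below.
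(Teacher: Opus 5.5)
Your outline has the right ingredients, but Step~2, which carries the whole proof, is left open, and the form you propose for it does not hold here. Because $k>1$, the vertex exponents satisfy $\lambda_1>1$ by \eqref{eq:les-lm}, so $\nabla u$ does not blow up at a vertex. The only unbounded contribution to $\nabla u$ comes from the first edge singular function, whose exponent $\mu_1\in(\frac12,1)$ is given by \eqref{eq:edge-expo} (not a $\pi/\alpha$-type exponent).

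By convexity, $d_H(D,D')$ is attained at a vertex $\mathbf v$. You are therefore forced to work at an edge point at distance $\rho$ from $\mathbf v$, and there the edge coefficient degenerates: by the vertex--edge interaction \eqref{eq:asym-coef} it behaves like $a^{\mathbf v,\lambda_1}_{E,\mu_1}\rho^{\lambda_1}$. What is available is thus not a uniform lower bound but the degenerate one $\gamma_{E,\mu_1}(\rho)\ge\rho^{\lambda_1+\e_*}$ for small $\rho$, which is exactly what item (4) of \zcref{assumption:singularity} provides. You must then tie $\rho$ and the size of the local region to $d_H(D,D')$; the paper takes $h_z=h$, $\rho=h/\ell$, $h\le\frac12 d_H(D,D')$. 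This is where the power $1/(3\lambda_1+\mu_1+1)$ of $\zeta(\e)$ in the final estimate comes from. So ``reducing the vertex case to an edge point'' is not a simplification; it is the core of the argument.

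Likewise, ``microlocal analysis would enter here'' is not a mechanism. The paper does not compare $\nabla u$ with $\nabla u'$ pointwise. It tests the transmission problem against the harmonic CGO function $u_0=e^{\tau(-\hat x+i\hat y)\cdot(x-x_O)}$ on a wedge-shaped cylinder around $E$, via \eqref{eq:int-id}. On the faces $F_\pm$ one has $u_0=e^{z_a^\pm r}$, and the Mellin integral of the $r^{\mu_1-1}$ flux produces $\Gamma(\mu_1)\tau^{-\mu_1}\int_0^{h_z}\gamma_{E,\mu_1}\delta(z)^{-\mu_1}\,dz$. This dominates the $O(\tau^{-1})$ contributions of the vertex part, the regular part and the higher edge terms precisely because $\mu_1<1$. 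The smoothing error $\mathcal K[\gamma_{E,\mu_1}]-\gamma_{E,\mu_1}$ is controlled by \zcref{lemma:K-gamma}. A pointwise version of your Step~2 would likewise have to control every other term of \eqref{eq:SingDecom} near the edge, plus the competing term $|\nabla u'|\lesssim 1/d_H(D,D')$; you address none of these.

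Your logarithmic bookkeeping in Steps~1 and~3 is also not a derivation. If Step~1 gave $|\ln\e|^{-c}$ on a ball at scale $r\sim d_H(D,D')$, with constants independent of $r$, and Step~2 gave a power of $r$, you would get a single-logarithmic estimate, stronger than the theorem. Your appeal to a chain length ``exponential in $d_H(D,D')$'' is never quantified. Moreover, Step~1 bounds $w$ in $L^\infty$, while Step~2 is a lower bound for $\nabla w$ right next to $\p D$, so the two are not comparable as stated.

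In the paper the double logarithm is already contained in \zcref{prop:propa-small}: $|w|\le T\zeta(\e)$ and $|\nabla w(x)|\le T\zeta(\e)/\dist(x,\p D)$, with $\zeta(\e)=(\ln|\ln\e|)^{-\alpha}$. These come from propagating up to $\p D$ and using H\"older continuity of $w$ and of $\dist(\cdot,\p D)\nabla w$, the latter itself resting on the singular decomposition. Applied on the surface $S^e$ at distance $1/\tau$ from $D$, this reduces everything to $h^{\lambda_1+\e_*-\mu_1+1}\tau^{-\mu_1}\lesssim h^{-1}\tau^{-1}+h^2\tau\zeta(\e)$. The choice $\tau=(h^3\zeta(\e))^{-1/2}$ then gives a purely polynomial relation between $h\sim d_H(D,D')$ and $\zeta(\e)$. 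Without the degenerate lower bound on $\gamma_{E,\mu_1}$, the weighted gradient smallness, and a concrete test-function argument with a tunable parameter, your proposal does not reach \eqref{eq:stable}.
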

 We present \zcref{assumption:singularity} in more detail in the next section.

 \medskip

\subsection{Context positioning and technical ingredients}
The inverse problem considered in this paper is a special case of the classical Calder\'on problem. Problem \eqref{eq:calderon} was first proposed by Calder\'on in his 1980 seminar talk \cite{Calderon1980} and has been extensively studied over the past decades. Equation \eqref{eq:calderon} models the steady state of the electric potential $u$ in a conducting medium with conductivity distribution $\sigma\in L^\infty(\O)$. Calder\'on asked whether one can determine the unknown conductivity $\sigma$ from boundary measurements of the voltage $u$ and current flux $\p_\nu u$ on $\p\O$. This question initiated major developments in PDE inverse problems. Its importance comes not only from its mathematical challenges but also from broad applications in areas such as medical imaging, geophysics, and non-destructive testing. A representative application is electrical impedance tomography (EIT), which aims to reconstruct the conductivity distribution inside a body from boundary data. We refer to \cite{Ammari2008,Uhlmann2009} for comprehensive introductions to the theory and applications, and to \cite{Martins2019,Leitzke2020,Mansouri2021} for recent developments.

Broadly speaking, research on inverse problems focuses on uniqueness, stability, and numerical reconstruction. The uniqueness question asks whether the unknown parameter is uniquely determined by the measurements. The stability question asks how reconstruction errors depend on measurement errors. By the pioneering works of Sylvester and Uhlmann \cite{Sylvester1987} and Alessandrini \cite{Alessandrini1988}, smooth conductivities $\sigma$ can be uniquely and stably recovered from the Dirichlet-to-Neumann map, that is, from infinitely many boundary conditions in \eqref{eq:calderon}. However, from an application viewpoint, only finitely many measurements are typically available. Moreover, piecewise constant conductivities with jumps across interfaces are often more realistic for inhomogeneous media. These two facts motivate the framework studied in this paper: piecewise constant conductivity with finitely many measurements.

For inverse problems that determine the inclusion $D$ in \eqref{eq:transmission} from a single measurement, general uniqueness and stability results remain open. Existing results rely strongly on geometric features of $D$. In two dimensions, uniqueness for polygonal inclusions was established in \cite{Friedman1989,Seo1995}, and stability was obtained recently in \cite{moi5}. Also in two dimensions, stability for disks was studied in \cite{Fabes1999}; in \cite{moi2}, the authors improved the stability estimate and proposed a numerical reconstruction scheme. Another two-dimensional uniqueness result \cite{moi6} replaces polygonal inclusions with smooth boundaries having high-curvature points. In three dimensions, uniqueness for balls was addressed in \cite{Kang1999ball}. For convex polyhedral inclusions, uniqueness under well-chosen boundary data was obtained in \cite{Barcelo1994}, while the corresponding stability is the main objective of the present paper. It is also worth mentioning that, in a series of systematic studies \cite{Beretta2022,Beretta2021,Aspri2022}, Lipschitz-type stability was established using the Dirichlet-to-Neumann map for two-dimensional polygons and three-dimensional polyhedra. Furthermore, for general inclusion shapes, local stability results in \cite{Bellout1992,Choulli2003,Choulli2006} show that if two inclusions are sufficiently close (in the sense that one boundary is a small $\Cont^1$ perturbation of the other), then Lipschitz stability can be obtained. Combining Beretta's global Lipschitz stability with this local result allows one to reduce the number of measurements to two \cite{Hanke2024} for polygonal inclusions, at the cost of assuming that the number of vertices is known a priori. In this paper, we do not make that assumption and we do not use the Dirichlet-to-Neumann map.

We now introduce the three main ingredients of our proof: singularity decomposition, propagation of smallness, and microlocal analysis. Singularity decomposition extracts the explicit local behavior of solutions to elliptic equations near polyhedral interfaces. Propagation of smallness provides estimates of solution differences near the inclusion in terms of the measurement error. To derive the final stability estimate, we perform a microlocal analysis in a carefully selected region with a suitably chosen test function.

It is well known that the regularity of solutions to elliptic equations depends strongly on the smoothness of the boundary. In the presence of corners or edges, solutions exhibit singular behavior near these non-smooth regions. The study of such singularities was initiated by Kondratiev in the 1960s \cite{Kondratev1967}. Since then, a rich theory has been developed to describe the singular structure of solutions in non-smooth domains; see, for example, \cite{Grisvard1985,Dauge1988,Kozlov2002,Mazya2010}. In general, a solution can be decomposed into a singular part and a regular part. The regular part has regularity unaffected by boundary non-smoothness, while the singular part is expressed as a linear combination of explicitly constructed singular functions determined by the local geometry and boundary conditions. We mention here several relevant results for transmission problems \cite{Kellogg1971,Nicaise1990,Nicaise1994i,Nicaise1994ii,Dauge1989Oblique,Rempel1989,Dauge1999,Costabel1999}.

The other two ingredients are unique continuation and microlocal analysis. Unique continuation addresses Cauchy problems for elliptic equations with partial boundary data; we refer to \cite{Alessandrini2009} and references therein. Our microlocal argument is mainly inspired by ideas from inverse scattering in \cite{Blsten2014,Elschner2015}. The method consists of constructing a special test function with a tunable parameter, inserting it into the variational formulation, and carefully estimating each integral term. The final stability estimate is then obtained by an appropriate choice of this parameter. This technique has led to several single-measurement uniqueness and stability results in inverse scattering; see \cite{Blsten2017,Blasten2016,Blsten2020,Liu2022,Cakoni2021}.

\subsection{Structure of the paper} 

In the following sections, we prove the stability result \eqref{eq:stable}. In \zcref{sec:singular}, we present singular decompositions for solutions to the transmission problem \eqref{eq:transmission} near a polyhedral corner. In \zcref{sec:propa}, we analyze unique continuation and derive estimates for $u-u'$ and $\n u-\n u'$ in terms of small Cauchy-data discrepancies on $\Gamma_0$. In \zcref{sec:integral}, we perform a microlocal analysis near a selected edge, establish an integral identity, and estimate each term. Finally, in \zcref{sec:proof}, we complete the proof by combining the previous results.

\section{Singular decomposition near a polyhedral corner}\label{sec:singular}

The following theorem appears in Theorem 7.1 of \cite{Dauge1999}, Theorem 5 of \cite{Nicaise1999}, and \cite{Costabel1999}.

\begin{theorem}\label{th:SingDecom}
   Let $u$ be the solution to \eqref{eq:transmission}. Then, $u$ admits the following decomposition into vertex-singular part, edge-singular part and a regular part.
   \begin{align}\label{eq:SingDecom}
        u=\sum_{{\bf v}\;{\rm vertex}} u_{{\bf v}}+\sum_{E \;{\rm edge}} u_E+u_{\rm reg}.
    \end{align}
\end{theorem}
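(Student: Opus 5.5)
The plan is to follow the Kondratiev--Dauge strategy, localising near each non-smooth part of $\p D$ and using Mellin analysis of the transmission operator.

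\textbf{Localisation.} Introduce a partition of unity $\{\chi_{\bf v}\}\cup\{\chi_E\}\cup\{\chi_0\}$ subordinate to a covering of $\overline{D}$. Here $\chi_{\bf v}$ is supported in a small ball around the vertex ${\bf v}$. Each $\chi_E$ is supported in a tubular neighbourhood of the edge $E$ with the vertex balls removed. The function $\chi_0$ covers the faces, the interiors, and a neighbourhood of $\p\O$, which is harmless because $D\subset\O_{\d_0}$. Away from edges and vertices, $\p D$ is flat, and local transmission regularity gives $\chi_0 u\in H^2$ on each side, or better. This piece goes into $u_{\rm reg}$.

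\textbf{Edges.} Near an interior point of an edge $E$ of opening $a\in[a_m,a_M]$, the configuration is a two-dimensional wedge transmission problem times a line. Separate variables in polar coordinates $(r,\t)$ transverse to $E$. The eigenvalue problem $\Phi''+\lambda^2\Phi=0$ on each sector, with continuity of $\Phi$ and of $k\Phi'$ versus $\Phi'$ at the two interfaces, yields exponents $\lambda_j(a,k)$ with $0<\lambda_j$. Kellogg-type analysis (\cite{Kellogg1971}) places the exponents with $\lambda_j<1$ explicitly, and these are nonzero since $k\neq1$ and $a\neq\pi$. Then set
\[
u_E=\sum_{\lambda_j<1}c_j(x_E)\,r^{\lambda_j}\Phi_j(\t),
\]
where the coefficients $c_j$ are functions along the edge. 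Following Dauge, they should be taken regularised by a convolution in the $x_E$ variable, of the form $\mathcal{K}(r)*c_j$, so that the remainder lies in $H^2$ piecewise. The proof of this step is a Fourier transform along $E$ combined with a Mellin transform in $r$. Shift the contour past the poles $\lambda_j$, collect the residues, and bound the remainder uniformly in the Fourier parameter.

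\textbf{Vertices.} At a vertex ${\bf v}$, use spherical coordinates $(\rho,\omega)$ and the Mellin transform in $\rho$. The operator pencil on the spherical polygon $G_{\bf v}\subset\S^2$, with the transmission conditions across $\p G_{\bf v}$, has discrete spectrum $\mu_\ell$. Set
\[
u_{\bf v}=\sum_{\mu_\ell\in(-1/2,1/2)}c_\ell\,\rho^{\mu_\ell}\psi_\ell(\omega),
\]
which also includes logarithmic terms if Jordan chains occur. Here $\psi_\ell$ are themselves singular at the corners of $G_{\bf v}$, so the edge singularities near the vertex must be matched. This requires working in weighted spaces $V^2_\beta$ and subtracting edge terms whose coefficients blow up like $\rho^{\mu-\lambda}$.

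\textbf{Main obstacle.} The main obstacle is this vertex--edge interaction, together with the requirement that no pencil eigenvalue lies on the critical line $\Re\mu=1/2$, so that the weight is admissible. Handling the interaction cleanly requires the Mazya--Plamenevskii and Dauge machinery. In practice I would invoke Theorem 7.1 of \cite{Dauge1999} and \cite{Nicaise1999,Costabel1999} directly. I would check that their hypotheses hold: $k>0$, and Lipschitz polyhedral interfaces with opening angles in the compact ranges of \zcref{assumption:geometric}, so that the constants are uniform.
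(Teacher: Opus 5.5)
Your route is the paper's route: the paper gives no proof of its own and simply quotes Theorem 7.1 of \cite{Dauge1999}, Theorem 5 of \cite{Nicaise1999} and \cite{Costabel1999}, which is where your sketch also ends up. The one thing to adjust is the regularity level. Your sketch is calibrated for a piecewise $H^2$ remainder (edge exponents below $1$, vertex exponents in $(-\frac{1}{2},\frac{1}{2})$). The paper instead uses case b) of that theorem with $\frac{3}{2}<s<\mu_1+1$, keeping all edge exponents with $0<\Re(\mu)<s$ and all vertex exponents with $-\frac{1}{2}<\Re(\lm)<s-\frac{1}{2}$ (in the paper's notation, $\mu$ is for edges and $\lm$ for vertices, the reverse of yours). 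This makes $u_{\rm reg}$ piecewise $H^{s+1}$ without shadow terms, and that is what later gives $\nabla u_{\rm reg}\in L^\infty$.
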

The structure of singular functions is intricate. In what follows, we describe in detail the regularity of the regular part $u_{\rm reg}$, as well as the local behavior of singular functions $u_{{\bf v}}$ and $u_E$ near vertices and edges.
    \begin{enumerate}
        \item Let $s>0$. This parameter originally describes the regularity of the right-hand side in \eqref{eq:transmission} with homogeneous boundary conditions. It characterizes the regularity that $u$ may have due to possible source terms. As we will see below, $s$ also determines the maximal order in the singular expansion. In our case, by a standard lifting argument, we can choose $s$ to achieve the desired regularity of $u_{\rm reg}$. We discuss the choice of $s$ later.
        \item The regular part $u_{\rm reg}\in H^{s+1}(D)\cup H^{s+1}(\O\setminus \overline{D})$ and satisfies the same transmission conditions as in \eqref{eq:transmission}. Moreover, the following estimate holds in general,
        \begin{align}\label{eq:esti-reg-cinqdemi}
            \|u_{\rm reg}\|_{H^{s+1}(D)}+\|u_{\rm reg}\|_{H^{s+1}(\O\setminus \overline{D})}\leq C \|f\|_{H^{1/2}(\p\O)}\lesssim 1.
        \end{align}\item For each vertex $\bf v$ of $D$, the singular function $u_{\bf v}$ can be expressed as follows.
         \begin{align*}
            u_{\bf v}(\rho,\vartheta)=\eta(\rho)\sum_{\begin{subarray}{c}
            \lm\in \tilde{S}({\bf v})\\ -\frac{1}{2}<\Re(\lm)<s-\frac{1}{2}
        \end{subarray}} \rho^\lm \sum_{m=0}^{K_{\bf v,\lm}}\g_{{\bf v},\lm}^m \log^m(\rho)\Phi_{{\bf v},\lm}^m(\vartheta).
         \end{align*}
         \begin{itemize}
            \item $(\rho,\vartheta)\in \R_+\times \S^2$ represent spherical coordinates centered at the vertex ${\bf v}$.
            \item $\eta$ is a smooth cut-off function such that $\eta(0)=1$ and its support is sufficiently small. 
            \item $\lm$ is called the singular exponent associated with the vertex ${\bf v}$. The set $\tilde{S}({\bf v})$ corresponds to the poles of an operator-valued meromorphic function, referred to as Mellin symbols or pencil operators.
            \item The $\log$ terms appear only when $\lm$ is an integer. We will later impose a generic condition that $K_{{\bf v},\lm}=0$ for the singular exponents $\lm$ under consideration.
            \item The singularity coefficients $\g_{{\bf v},\lm}^m\in\C$ depend continuously on the input data and the geometry of the polyhedron $D$. In fact, the coefficients $\g_{{\bf v},\lm}^m$ can be expressed as duality pairings between the input data and associated singular functions of the adjoint problem. Such formulas can be found, for example, in \cite{Nicaise1990}. Moreover, the following estimate holds \cite{Nicaise1994i,Nicaise1994ii},
            \begin{align*}
                |\g_{{\bf v},\lm}^m|\leq C \|f\|_{H^{1/2}(\p\O)}\lesssim 1.
            \end{align*}
            \item The singular functions $\Phi_{{\bf v},\lm}^m \in H^1(\S^2)$ are harmonic in each subdomain of $\S^2$ and satisfy the same transmission conditions across interfaces. We discuss in more detail below the characterization of the singular exponents $\lm$ and the associated functions $\Phi_{{\bf v},\lm}^m$.
        \end{itemize}
        \item For each edge $E$ of $D$, the singular function $u_E$ can be expressed as follows.
        \begin{align*}
            u_E(r,\t,z)=\eta(\tilde{r})\sum_{\begin{subarray}{c}
            \mu\in \tilde{S}(E)\\ 0<\Re(\mu)<s
        \end{subarray}}\tilde{r}^\mu\sum_{m=0}^{K_{E,\mu}}\mathcal{K}[\g_{E,\mu}^m](\tilde{r},\widetilde{z}) \log^m(\tilde{r}) \vf_{E,\mu}^m(\t).
        \end{align*}
        \begin{itemize}
            \item $(r,\t,z)$ are cylindrical coordinates with the $z$-axis coinciding with the edge $E$.
            \item $\ds \tilde{r}=\frac{r}{\d(z)}$, where $\d$ is a smooth function equivalent to the distance from the endpoints of the edge $E$.
            \item $\mu$ is called the edge singular exponent and is obtained by arguments similar to those for vertex singular exponents.
            \item The edge singularity coefficients $\g_{E,\mu}^m$ vary in the transversal direction along the edge $E$. They are described as elements in (Kondratiev-type) weighted Sobolev spaces.
            \item The behavior of $\g_{E,\mu}^m$ near vertices is influenced by vertex singularities, we will discuss later in ``8.) interaction between vertex and edge singularities''.
            \item The operator $\mathcal{K}$ is defined by convolution with a Schwartz kernel. It acts as a lifting operator from the edge $E$ to its cylindrical neighborhood. For any fixed $\tilde{r}\neq 0$, the map $z\mapsto \mathcal{K}[\g](\tilde{r},z)$ is smooth on $E$. For any fixed $z\in E$, $\mathcal{K}[\g](\tilde{r},z)$ converges to $\g$ as $\tilde{r}\rightarrow 0$.
            \item Remarks analogous to those for the vertex coefficients $\g_{{\bf v},\lm}^m$ apply to the computation and estimates of $\g_{E,\mu}^m$. In particular, one has \cite{Costabel1993i},
            \begin{align}\label{eq:esti-edge-coef}
                \|\g_{E,\mu}^m\|_{V_{-s}^{s-\Re(\mu)}(E)}\leq C \|f\|_{H^{1/2}(\p\O)}\lesssim 1.
            \end{align}
            \item The same remarks on $\log$ terms as in the vertex case apply here. We will also impose a generic condition $K_{E,\mu}=0$ for the edge singular exponents $\mu$ under consideration.
            \item $\vf_{E,\mu}^m\in H^1(\S^1)$ is harmonic in each subinterval and satisfies the same transmission condition across the interfaces.
            \item For a more detailed description of edge singular functions, we refer to \zcref{explain:edge}.
        \end{itemize}
        \item The cut-off function $\eta$ equals $1$ on $[0,\ell]$ and vanishes on $[2\ell,+\infty)$, where $\ell$ is sufficiently small. For instance, we can choose
        \begin{align}\label{eq:cutoff-support}
            \ell=\min_{\begin{subarray}{c} E,E'\; {\rm edge}\\ {\bf v}\; {\rm vertex} \end{subarray}}   \left\lbrace \frac{1}{2},\; \frac{1}{5}l_0\;, \tan \left(\frac{\Theta_{{\bf v},E,\;E'}}{2}\right) \right\rbrace.
        \end{align}
        Here, $l_0$ denotes the smallest edge length and $\Theta_{{\bf v},E,E'}$ represents the opening angle between two adjacent edges $E$ and $E'$ meeting at the vertex ${\bf v}$. With this choice, the supports of vertex singular functions are pairwise disjoint, and the same holds for edge singular functions. We remark that $\ell$ depends only on geometric parameters of the polyhedron $D$, and can therefore be treated as an {\it a priori} parameter.
        \item To determine the vertex singular exponents $\lm$, we use the fact that the singular functions $\rho^\lm \Phi_{{\bf v},\lm}(\vartheta)$ satisfy the same transmission problem in a sufficiently small ball centered at ${\bf v}$. Writing the transmission problem in spherical coordinates centered at ${\bf v}$, we deduce that $\Phi_{{\bf v},\lm}$ satisfies
        \begin{align*}
            \dvg_{\S^2}((1+(k-1)\chi_{G_{\bf v}})\n_{\S^2}\Phi_{{\bf v},\lm})+\lm(\lm+1)\Phi_{{\bf v},\lm}=0 \quad {\rm on}\;\; \S^2.
        \end{align*}
        Here, $C_{\bf v}$ denotes the polyhedral cone generated by the vertex ${\bf v}$ and its adjacent edges, and $G_{\bf v}=\S^2\cap C_{\bf v}$ is the corresponding spherical polygon. Then the eigenvalues $\lm$ can be expressed as follows.
        \begin{align*}
            \lm=-\frac{1}{2}\pm\sqrt{\frac{1}{4}+\upsilon},
        \end{align*}
        where $\upsilon$ describes the eigenvalues of the associated Laplace-Beltrami operator acting on $H^1(\S^2)$:
        \begin{align*}
            \Lop_{\bf v}: u\mapsto -\dvg_{\S^2}((1+(k-1)\chi_{G_{\bf v}})\n_{\S^2} u),
        \end{align*}
        where the derivatives are understood in weak sense. This elliptic operator is unbounded, positive, self-adjoint with dense domain, and $(Id-\Lop_{\bf v})^{-1}$ is compact. From spectral theory, its eigenvalues can be ordered as $\upsilon_1\leq \upsilon_2\leq \cdots$ with $\upsilon_j\rightarrow +\infty$ when $j\rightarrow +\infty$. The first eigenvalue $\upsilon_1$ can be characterized by the following Rayleigh quotient:
        \begin{align*}
            \upsilon_1=\min_{u\in H^1(\S^2),\; u \perp {\rm const}}\frac{\ds \int_{\S^2}|\n_{\S^2} u|^2+(k-1)\int_{G_{\bf v}}|\n_{\S^2} u|^2}{\ds \int_{\S^2}|u|^2}.
        \end{align*}
        We can obtain, by using the Poincar\'e inequality, that $\upsilon_1 > \min\{1,k\}\Lambda_1$, where $\Lambda_1$ is the first positive eigenvalue of the Laplace-Beltrami operator $\Delta_{\S^2}$. Direct calculation shows that $\Lambda_1=2$. Consequently, the vertex singular exponents follow the order,
        \begin{align}\label{eq:les-lm}
            1<\lm_1 < \lm_2 < \cdots \rightarrow +\infty.
        \end{align}
        \item The edge singular exponents $\mu$ are exactly the same as the corner singularity of a plane sector in $\R^2$. For an edge of an opening angle $a\in (0,\pi)$, the singularity exponents $\mu$ satisfies, (see e.g. (2.31) in \cite{Bellout1992}) 
        \begin{align}\label{eq:edge-expo}
            \sin(\mu(\pi-a))=\left|\frac{k+1}{k-1}\right|\sin(\mu \pi).
        \end{align}
        The associated functions $\vf_{E,\mu}^m$ are simply $\cos(\mu\t+\phi_\pm)$ with two phases $\phi_\pm$ such that the transmission conditions hold. Using the same argument as for vertex singularities, these exponents can be ordered increasingly. Moreover, by studying the behavior of the function $t\mapsto \frac{\sin(t(\pi-a))}{\sin(t\pi)}$, one deduces that,
        \begin{align}\label{eq:les-mu}
            \frac{1}{2}<\mu_1<1<\mu_2<\cdots \rightarrow +\infty.
        \end{align}
        \item Interaction between vertex and edge singularities. Another crucial point is how edge-singularity coefficients behave near an adjacent vertex. For simplicity, we consider the case without $\log$ terms as explained in \zcref{assumption:singularity}. Let $\mu$ be an edge singular exponent and $\g_{E,\mu}$ be the corresponding edge singular coefficient. Let $\rho$ denote the distance from the vertex $\bf v$ to a point on the edge $E$. Then, the edge singular coefficient $\g_{E,\mu}$ has the following asymptotic behavior \cite{Dauge1999} near the vertex $\bf v$,
        \begin{align}\label{eq:asym-coef}
            \g_{E,\mu}(\rho)= \sum_{\lm } a_{E,\mu}^{{\bf v},\lm}\rho^{\lm}+o (\rho^{\lm}) \quad {\rm as}\;\; \rho\rightarrow 0,
        \end{align}
        where $a_{E,\mu}^{{\bf v},\lm}$ are constant coefficients and $\lm=\lm_1,\lm_2,\cdots$ are vertex singular exponents.
    \end{enumerate}

    \medskip

    We now impose some generic assumptions to simplify the stability analysis. The exceptional cases are very limited and can be avoided by a suitable choice of boundary condition. We introduce the smallest singularity exponent,
        \begin{align}\label{eq:def-s0}
            s_0=\min_{\begin{subarray}{c} {\bf v}\; {\rm vertex} \\ E\; {\rm edge} \end{subarray}} \left\lbrace\lm_{{\bf v}}+\frac{1}{2},\;\;\mu_{E} \right\rbrace.
        \end{align}

    \begin{assumption}[Singularity]\label{assumption:singularity}
        Throughout the paper, we will assume that the following conditions hold.
    \begin{enumerate}
        \item  We follow case b) of Theorem 7.1 in \cite{Dauge1999}. That is, the parameter $s$ is chosen such that $s_0\leq s <s_0+1$ in order to have sufficient regularity for $u_{\rm reg}$ and to avoid the ``shadow terms'' in condition c) for singular functions. Furthermore, from the estimates of the first singularity exponents $\frac{1}{2}<\mu_1<1<\lm_1$, we have $s_0=\mu_1>\frac{1}{2}$ and $s_0+1>\frac{3}{2}$. With this condition, we will choose $s$ close enough to $\mu_1+1$ such that,
        \begin{align*}
            \frac{3}{2}< s <\mu_1+1.
        \end{align*}
        As an immediate consequence, the regular part $u_{\rm reg}$ has at least $H^{5/2+\epsilon}$ regularity in each subdomain, where $\epsilon:=s-\frac{3}{2}>0$.
        \item There is no $\log$ terms in the vertex or edge singular expansions. That is, $K_{\bf v,\lm}=K_{E,\mu}=0$ for any vertex $\bf v$ and any edge $E$.
        \item For any edge $E$, the first edge singularity is not degenerate, i.e., $\g_{E,\mu_1}$ is not identically null on $E$.
        \item For any adjacent vertex-edge pair $({\bf v}, E)$ the first interaction coefficient $a_{E,\mu_1}^{{\bf v},\lm_1}$ in the asymptotic expansion \eqref{eq:asym-coef} is not zero.
    \end{enumerate}
    \end{assumption}

    \subsection*{Discussion on the singularity assumptions}
    \begin{enumerate}
        \item The condition imposed on $s$ is more a matter of convenience than a strict necessity. The main purpose is to ensure that the regular part $u_{\rm reg}$ has sufficient regularity for the subsequent analysis. In fact, one can let $s$ tend to infinity to obtain a full expansion as in Lemma 2.1 of \cite{Bellout1992}. The present choice of $s$ suffices for our analysis, and avoiding the unnecessary complexity of shadow terms is a practical advantage.
        \item This condition corresponds exactly to Remark 5.3 in \cite{Dauge1999}. From that paper, the $\log$ terms appear only when the singular exponent is an integer. In our case, we consider only the first singular exponents $\lm_1$ and $\mu_1$, which are not integers by the previous estimates \eqref{eq:les-lm} and \eqref{eq:les-mu}. Therefore, this condition is generically satisfied.
        \item From the estimates \eqref{eq:les-lm} and \eqref{eq:les-mu}, only the first edge singular exponent $\mu_1$ is less than $1$. From the singular decomposition, the blow-up phenomenon of $\n u$ occurs only on non-degenerate edges. If the first edge singularity is degenerate, then the gradient $\n u$ is bounded in a neighborhood of the edge. Since the blow-up of electric fields near sharp boundaries of embedded inclusions is a well-recognized physical phenomenon, we treat this condition as a generic assumption.  
        \item This condition states implicitly that the first vertex singularity is also non-degenerate for every vertex ${\bf v}$. One can see in \cite{Dauge1999} how vertex and edge singularities interact. In fact, this condition can be relaxed to a weaker version, which implies a weaker stability estimate.
        \item The question of which boundary data guarantee the above assumptions is an interesting direction for further research, though it falls outside the main scope of this paper. We remark that in \cite{Kang1997,Alessandrini1992,Alessandrini1994}, such a boundary condition is explicitly constructed, from which stability is derived for an inverse conductivity problem under a single measurement. However, their construction is based on the two-dimensional case and cannot be directly extended to three dimensions. We will leave this question for future research.
    \end{enumerate}

    \section{Propagation of smallness}\label{sec:propa}

    In this section, we establish estimates for $u-u'$ as well as $\n u-\n u'$ in terms of the small discrepancy in the Cauchy data on $\p\O$. The main results here are essentially the same as those obtained in \cite{moi5}. To keep the paper reasonably self-contained, we list the key ingredients used to derive these estimates. We begin with the classical three-sphere inequality for harmonic functions \cite{Korevaar1994}.

    \begin{lemma}[Three sphere inequality]
        Let $0<r_1<r_2<r_3$. There exists a constant $\widetilde{\a}\in (0,1)$ such that for any harmonic function $w\in H^1_{loc}(\R^n)$ with $n\geq 2$, it holds that
        \begin{align*}
            \sup_{|x|\leq r_2}|w(x)|\leq \left(\sup_{|x|\leq r_1}|w(x)|\right)^{\widetilde{\a}}\left(\sup_{|x|\leq r_3}|w(x)|\right)^{1-\widetilde{\a}}.
        \end{align*}
        The exponent $\widetilde{\a}$ depends only on $n$ and the ratios $r_1/r_2$, $r_2/r_3$.
    \end{lemma}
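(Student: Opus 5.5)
The plan is to prove the three sphere inequality via Hadamard's three-circle (log-convexity) theorem for harmonic functions. Write $M(r)=\sup_{|x|\leq r}|w(x)|$. By the maximum principle this equals $\sup_{|x|=r}|w(x)|$. It then suffices to show that $\log M(r)$ is, up to a controlled loss, a convex function of $\log r$ (of $r^{2-n}$ when $n\geq 3$).

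The first step uses the spherical harmonic expansion
\[
w(x)=\sum_{j\geq 0}|x|^j Y_j(x/|x|),
\]
valid on all of $\R^n$. Orthogonality of the $Y_j$ on the sphere gives the $L^2$ means
\[
m(r)^2:=\frac{1}{|\S^{n-1}|}\int_{\S^{n-1}}|w(r\omega)|^2\,d\omega=\sum_j r^{2j}\|Y_j\|^2_{L^2(\S^{n-1})}.
\]
With $t=\log r$, this is a sum of exponentials $e^{2jt}$ with nonnegative coefficients. By Hölder's inequality such a sum is log-convex in $t$. Setting $\widetilde{\a}=\log(r_3/r_2)/\log(r_3/r_1)$, we obtain the exact inequality
\[
m(r_2)\leq m(r_1)^{\widetilde{\a}}\,m(r_3)^{1-\widetilde{\a}}.
\]

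The second step converts $L^2$ means into suprema, which is the only real obstacle. One direction is immediate: $m(r)\leq M(r)$. For the other, the mean value property on balls and Cauchy–Schwarz give
\[
M(r)\lesssim_{n,\theta}\Big(\frac{1}{|B_{\theta r}|}\int_{B_{\theta r}}|w|^2\Big)^{1/2}\quad\text{for }\theta>1,
\]
and the solid $L^2$ average over $B_{\theta r}$ is controlled by $\sup_{\rho\leq\theta r}m(\rho)=m(\theta r)$, since $m$ is increasing. This yields $M(r)\leq C_{n,\theta}\,m(\theta r)$ with a multiplicative constant.

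To remove that constant, as the statement requires, I would work directly with $\log M$ instead. Hadamard's argument uses the comparison function $a+b\,\phi(|x|)$, where $\phi=\log|x|$ for $n=2$ and $\phi=-|x|^{2-n}$ for $n\geq3$. This function is harmonic away from the origin, and $\pm w$ is compared with it on the annulus $r_1\leq|x|\leq r_3$. The maximum principle for the harmonic function $w$ minus this comparison function then shows that $\log M$ is convex in $\phi(r)$, which gives the inequality with no constant. The exponent is
\[
\widetilde{\a}=\frac{\phi(r_3)-\phi(r_2)}{\phi(r_3)-\phi(r_1)},
\]
which lies in $(0,1)$ and depends only on $n$ and the ratios $r_1/r_2$ and $r_2/r_3$, since $\phi$ is homogeneous up to additive or multiplicative rescaling.

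The delicate point is that only $|w|$, not $\log|w|$, is controlled by a maximum principle. So I would apply it to the subharmonic quantity $\log|w|$ only when $n=2$, by taking $w$ as the real part of a holomorphic $F$ and using $\log|F|$. For $n\geq3$ I would fall back on the $L^2$ log-convexity of the first step, absorbing the constant by running the argument at nested radii. Alternatively, as the paper does, I would cite \cite{Korevaar1994} for the sharp constant-free form.
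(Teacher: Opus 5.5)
\textbf{There is a genuine gap: removing the multiplicative constant.} Your first two steps are sound. They give $M(r_2)\le C\,M(r_1)^{\beta}M(r_3)^{1-\beta}$ with $C=C(n,\theta)$, $\theta r_2<r_3$ and $\beta=\log(r_3/\theta r_2)/\log(r_3/r_1)$. None of the routes you propose then removes $C$.

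\emph{The comparison function.} Comparing $\pm w$ with $a+b\phi(|x|)$ on the annulus only shows that $M$ itself is convex in $\phi(r)$, i.e.\ $M(r_2)\le\gamma M(r_1)+(1-\gamma)M(r_3)$ with $\gamma=(\phi(r_3)-\phi(r_2))/(\phi(r_3)-\phi(r_1))$. It does not give convexity of $\log M$. Hadamard's passage to $\log M$ needs $\log|F|$ to be subharmonic, whereas for real harmonic $w$ one has $\Delta\log|w|=-|\nabla w|^2/w^2\le 0$ off the zero set. In fact the constant-free inequality with Hadamard's exponent is false already for $n=2$. Take $w=\mathrm{Re}(z-z^3)$. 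On $|z|=r$, with $c=\cos\psi$, $w=(r+3r^3)c-4r^3c^3$, which is increasing in $c\in[-1,1]$ for $r\le 1/3$, so $M(r)=r-r^3$ there. Its logarithm is strictly concave in $\log r$. With $r_1=0.1$, $r_2=\sqrt{0.03}$, $r_3=0.3$ (so $\gamma=1/2$) one gets $M(r_2)\approx 0.1680>0.1644\approx\sqrt{M(r_1)M(r_3)}$. So the exponent in the constant-free result of \cite{Korevaar1994}, which the paper cites instead of proving the lemma, cannot be Hadamard's; the word ``sharp'' in your last sentence is misplaced.

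\emph{The $n=2$ detour.} Using $\log|F|$ controls $\sup|F|$. Bounding $\sup_{B_{r_1}}|F|$ by $|\mathrm{Re}F|$ requires a conjugate-function (Borel--Carath\'eodory) estimate on a larger ball, with a constant. That is exactly the constant you are trying to eliminate.

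\emph{Nested radii.} ``Absorbing the constant at nested radii'' is not an argument. Iterating multiplies constants. Moreover, when $M(r_1)\approx M(r_3)$, any bound with a factor $C>1$ is vacuous, while the target inequality is not.

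\textbf{The missing idea.} You need a quantitative treatment of the regime $M(r_1)\approx M(r_3)$, and the arithmetic bound above supplies it. Put $t=M(r_1)/M(r_3)\in[0,1]$ (the case $M(r_3)=0$ is trivial) and $\delta=C^{-2/\beta}$. If $t\le\delta$, the $L^2$ bound gives $M(r_2)/M(r_3)\le Ct^{\beta}\le t^{\beta/2}$. If $t>\delta$, the inequality $t^{\alpha}\ge 1+\alpha\log t\ge 1-\alpha(1-t)/t\ge 1-\alpha(1-t)/\delta$ gives
\[
\frac{M(r_2)}{M(r_3)}\le 1-\gamma(1-t)\le t^{\alpha}\qquad\text{whenever } \alpha\le\gamma\delta .
\]
Hence $\widetilde{\alpha}=\min\{\beta/2,\gamma\delta\}\in(0,1)$ works. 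Choosing, e.g., $\theta=(1+r_3/r_2)/2$ makes $C$, $\beta$, $\gamma$, $\delta$, and therefore $\widetilde{\alpha}$, depend only on $n$, $r_1/r_2$ and $r_2/r_3$, as the lemma requires.
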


    From now on, we fix the ratios $r_2/r_1=r_3/r_2=2$, and hence the exponent $\widetilde{\a}$ can be treated as an {\it a priori} parameter.

    \medskip

    Next, we apply the three-sphere inequality iteratively along a chain of balls with suitably chosen radii, in order to estimate $w$ near an interior point in terms of its size near the boundary. In what follows, $B(x,r)$ denotes the open ball centered at $x$ with radius $r$.

    \begin{proposition}\label{prop:p1}
        Let $r>0$ and let $U\subset \R^3$ be a bounded and connected domain. Consider $x,y\in U$ and assume that there exists a rectifiable curve $\g$ in $U$ with endpoints $x$ and $y$ such that $\dist(\g,\p U)>4r$. Consider also a bounded harmonic function $w$ in $U$. We assume furthermore that $w$ is bounded with $\|w\|_{L^\infty(B(x,r))}\leq 1$ and $\|w\|_{L^\infty(U)}= T>0$. Then, it holds that
        \begin{align*}
            \|w\|_{L^\infty(B(y,r))}\leq T \|w\|_{L^\infty(B(x,r))}^{\widetilde{\a}^{|\g|/r+1}},
        \end{align*}
        where $|\g|$ denotes the length of $\g$.
    \end{proposition}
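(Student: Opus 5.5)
The plan is to cover the curve $\g$ by a chain of balls of radius $r$ and apply the three sphere inequality (with the fixed ratios $r_2/r_1=r_3/r_2=2$) along the chain. Parametrize $\g$ by arclength and pick points $x_0=x, x_1,\dots,x_N=y$ on $\g$ with $|x_{j+1}-x_j|\le r$ and $N\le |\g|/r+1$; this is possible by cutting $\g$ into arcs of length at most $r$.

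\textbf{One step of the chain.} Consider the concentric balls $B(x_j,r)\subset B(x_j,2r)\subset B(x_j,4r)$. Since $\dist(\g,\p U)>4r$, the largest ball lies in $U$, where $w$ is harmonic. The three sphere inequality, applied in translated form, gives
\begin{align*}
\|w\|_{L^\infty(B(x_j,2r))}\le \|w\|_{L^\infty(B(x_j,r))}^{\widetilde{\a}}\,\|w\|_{L^\infty(B(x_j,4r))}^{1-\widetilde{\a}}\le \|w\|_{L^\infty(B(x_j,r))}^{\widetilde{\a}}\,T^{1-\widetilde{\a}}.
\end{align*}
The lemma is stated for harmonic functions in $\R^n$, but its proof only uses harmonicity in the outer ball, so this application is legitimate. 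Because $|x_{j+1}-x_j|\le r$, we have $B(x_{j+1},r)\subset B(x_j,2r)$. Setting $m_j=\|w\|_{L^\infty(B(x_j,r))}$, this yields the recursion
\begin{align*}
m_{j+1}\le T^{1-\widetilde{\a}}\, m_j^{\widetilde{\a}}.
\end{align*}

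\textbf{Iteration.} Normalize by setting $\tilde m_j=m_j/T\le 1$. The recursion becomes $\tilde m_{j+1}\le \tilde m_j^{\widetilde{\a}}$, so $\tilde m_N\le \tilde m_0^{\widetilde{\a}^N}$, that is,
\begin{align*}
m_N\le T\,(m_0/T)^{\widetilde{\a}^N}.
\end{align*}

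\textbf{Main obstacle: removing the normalization and matching the exponent.} The target bound is $T\,m_0^{\widetilde{\a}^{N}}$, and $(m_0/T)^{\widetilde{\a}^N}=m_0^{\widetilde{\a}^N}T^{-\widetilde{\a}^N}$. The extra factor $T^{-\widetilde{\a}^N}$ is at most $1$ exactly when $T\ge1$. So I would split into two cases.

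If $T\ge 1$, drop the factor $T^{-\widetilde{\a}^N}$ directly.

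If $T<1$, avoid normalizing. The unnormalized recursion bounds each $m_j$ by a product of powers of $T$ and $m_0$ with exponents summing to $1$. Since $m_0\le T$ and $m_0\le 1$, this gives $m_N\le T^{1-\widetilde{\a}^N}m_0^{\widetilde{\a}^N}\le m_0^{\widetilde{\a}^N}$. This does not yet carry the factor $T$, but $m_N\le T$ holds trivially. To recover the stated form, note that $m_0\le T<1$ gives $m_0^{\widetilde{\a}^N}\ge m_0\cdot m_0^{\widetilde{\a}^N-1}$, and I would combine this with $T\ge m_0$ to conclude $m_N\le T\,m_0^{\widetilde{\a}^N}$ up to the stated exponent.

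Finally, since $m_0\le1$, the quantity $m_0^{\widetilde{\a}^N}$ increases with $N$. Using $N\le |\g|/r+1$ therefore gives the exponent $\widetilde{\a}^{|\g|/r+1}$ claimed in the statement.
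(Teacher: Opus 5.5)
Your chain of balls, the one-step bound $m_{j+1}\le T^{1-\widetilde{\a}}m_j^{\widetilde{\a}}$, the iterate $m_N\le T^{1-\widetilde{\a}^N}m_0^{\widetilde{\a}^N}$, and the final monotonicity in $N$ are all correct. Together they prove the statement completely when $T\ge 1$. This is the standard argument the paper defers to (Lemma 3.2 of \cite{moi5}).

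The gap is your case $T<1$. The relation $m_0^{\widetilde{\a}^N}\ge m_0\cdot m_0^{\widetilde{\a}^N-1}$ is just the identity $m_0^{\widetilde{\a}^N}=m_0\cdot m_0^{\widetilde{\a}^N-1}$ and carries no information. The two separate bounds $m_N\le T$ and $m_N\le m_0^{\widetilde{\a}^N}$ cannot be combined into $m_N\le T\,m_0^{\widetilde{\a}^N}$, because a product of two numbers in $(0,1)$ is smaller than each factor. From your iterate you would need $T^{1-\widetilde{\a}^N}\le T$, which is false for $T<1$.

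In fact no argument can close this case, because the statement as written is false when $T<1$, and no choice of positive exponent repairs it. Take $w\equiv c$ with $0<c<1$, for any admissible $U$, $x$, $y$, $\g$. Then $T=\|w\|_{L^\infty(B(x,r))}=\|w\|_{L^\infty(B(y,r))}=c$, and the claim would read $c\le c^{1+\widetilde{\a}^{|\g|/r+1}}$, which fails. What your argument does prove, and what is true, is
\begin{align*}
\|w\|_{L^\infty(B(y,r))}\le T^{1-\widetilde{\a}^N}\|w\|_{L^\infty(B(x,r))}^{\widetilde{\a}^N}\le \max\{T,1\}\,\|w\|_{L^\infty(B(x,r))}^{\widetilde{\a}^{|\g|/r+1}}.
\end{align*}
So you should either state the conclusion with $\max\{T,1\}$ in place of $T$, or add the hypothesis $T\ge 1$. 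The latter costs nothing: your chain only uses the upper bound $\|w\|_{L^\infty(U)}\le T$, so $T$ may always be enlarged to $\max\{T,1\}$. In the later applications (Propositions~\ref{prop:aa2} and~\ref{prop:propa-small}), $T$ only serves as an a priori upper bound, so this is all that is needed.
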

    \begin{proof}
        See Lemma 3.2 in \cite{moi5}.
    \end{proof}

    \medskip

    The next step is to formulate propagation of smallness from boundary data into the interior. We obtain the following result.

    \begin{proposition}\label{prop:aa2}
        Let $U$ be a Lipschitz connected domain. Let $\Sigma\subset \p U$ be an open portion of the boundary. Consider a harmonic function $w$ in $U$. We make the following assumptions 
        \begin{itemize}
            \item for any $y\in U$, there exists a rectifiable curve $\g$ in $U$ connecting $y$ and a point $x\in \Sigma$,
            \item the length of $\g$ is bounded by a uniform constant $L>0$,
            \item $\g$ does not touch the boundary of $U$ outside $\Sigma$, i.e., $\dist(\g,\p U\setminus \Sigma)>4r$ for some $r>0$,
            \item $w$ is bounded with $\|w\|_{L^\infty(U)}=T>0$,
            \item $\|w\|_{H^{1/2}(\Sigma)}+\|\p_{\nu} w\|_{H^{-1/2}(\Sigma)}\leq \e$ with $0< \e<1$.
        \end{itemize}
        Then, for all $y\in U$ and all sufficiently small $r>0$, one has
        \begin{align}\label{eq:propa-small-holder}
            |w(y)|\leq C T \e^{\widetilde{\a}^{L/r+1}},
        \end{align}
        with a constant $C$ depending only on {\it a priori} data.
    \end{proposition}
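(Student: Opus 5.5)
The plan is to extend $w$ harmonically across $\Sigma$ by a small amount, so that its value near some point of $\Sigma$ is controlled by the Cauchy data. We then transport that smallness to $y$ with \zcref{prop:p1}.

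\textbf{Step 1: the extension.} Fix $x\in\Sigma$ as given by the hypotheses, together with a small ball $B_0$ centered at $x$ whose intersection with $\p U$ lies inside $\Sigma$. Using the Lipschitz character of $\p U$, attach to $U$ a small Lipschitz region $U_0$ on the other side of $\Sigma$. Let $\widetilde U=U\cup\Sigma\cup U_0$, and choose $U_0$ so that $\widetilde U$ contains a ball $B(\tilde x,r)$ lying entirely in $U_0$ at distance comparable to $r$ from $\Sigma$.

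On $U_0$, let $w_0$ be the solution of a well-posed boundary value problem whose data on $\Sigma$ are the Cauchy data of $w$. Concretely, take $w_0$ harmonic in $U_0$ with $w_0=w$ on $\Sigma$ and $w_0=0$ on $\p U_0\setminus\Sigma$.

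The glued function $\widetilde w=w\chi_U+w_0\chi_{U_0}$ is not harmonic. It satisfies $\Delta\widetilde w=\mu$, where $\mu$ is a distribution supported on $\Sigma$ built from the jump $\p_\nu w-\p_\nu w_0$. Its size is bounded in $H^{-1}$ by $\|w\|_{H^{1/2}(\Sigma)}+\|\p_\nu w\|_{H^{-1/2}(\Sigma)}\lesssim\e$, using continuity of the Dirichlet-to-Neumann map on $U_0$.

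To remove the source, subtract the Newtonian potential $N\mu$. Set $v=\widetilde w-N\mu$. Then $v$ is harmonic in $\widetilde U$, and $\|N\mu\|_{L^\infty}$ away from $\Sigma$ is $\lesssim\e$ by interior regularity.

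\textbf{Step 2: smallness and bounds for $v$.} On $B(\tilde x,r)$ we have $|w_0|\lesssim\e$ by interior estimates for $w_0$, which is controlled by its $H^{1/2}$ Dirichlet data. Hence $\|v\|_{L^\infty(B(\tilde x,r))}\lesssim\e$. Globally, $\|v\|_{L^\infty(\widetilde U)}\lesssim T+\e\lesssim T$.

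\textbf{Step 3: propagation.} Prolong $\g$ by a segment of length $O(r)$ from $x$ to $\tilde x$. The prolonged curve stays at distance greater than $4r$ from $\p\widetilde U$: near $\Sigma$ this comes from the choice of $U_0$, and elsewhere from the hypothesis $\dist(\g,\p U\setminus\Sigma)>4r$.

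Rescale $v$ by $\e^{-1}$ so that the hypothesis of \zcref{prop:p1} holds. That proposition then gives
\[
\|v\|_{L^\infty(B(y,r))}\lesssim T\,\e^{\widetilde{\a}^{(L+Cr)/r+1}}.
\]
Finally $|w(y)|\le |v(y)|+|N\mu(y)|$. The Newtonian term is $O(\e)$ when $y$ is away from $\Sigma$, and $\e$ is dominated by the Hölder-type bound. Absorbing constants, and using $\widetilde{\a}^{C}$ as a fixed power, yields \eqref{eq:propa-small-holder}.

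\textbf{Main obstacle.} The delicate part is Step 1 with only $H^{1/2}\times H^{-1/2}$ data on a Lipschitz piece $\Sigma$. One needs a pointwise $L^\infty$ bound on the correction $N\mu$ and on $w_0$ at points a distance $\sim r$ from $\Sigma$, with constants depending only on a priori data. My approach is to keep all evaluation points at distance at least $r$ from $\Sigma$, where the kernel is smooth, and to pair $\mu\in H^{-1}$ against smooth test functions. This also covers $y$ close to $\Sigma$: there we evaluate $w$ on the ball $B(y,r)$ reached by the chain, and any residual contribution is bounded directly by the Cauchy data. Alternatively, one can cite the equivalent lemma in \cite{moi5} or \cite{Alessandrini2009}.
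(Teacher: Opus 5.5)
Your route is the paper's. You extend $w$ across $\Sigma$, observe that the extended function solves a Poisson equation whose source is small in terms of the Cauchy data, obtain smallness on a ball outside $U$ by interior estimates, and carry it to $y$ with \zcref{prop:p1}. The paper takes the extension from Lemma 6.1 and Theorem 6.2 of \cite{Alessandrini2009}, on an exterior neighbourhood $V$ of $\Sigma$. Your explicit removal of the source is precisely what the paper compresses into ``apply \zcref{prop:p1} to this modified equation''. It is what makes that step legitimate, since \zcref{prop:p1} concerns harmonic functions. You can do this with a potential, or more simply with the solution $z\in H^1_0(\widetilde U)$ of $\Delta z=\mu$, which spares you from extending $\mu$ to $\R^3$.

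Several details need correcting, and one claim is wrong.

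(i) Dividing $v$ by $\e$ turns the global bound into $T/\e$, and \zcref{prop:p1} then only yields $\|v\|_{L^\infty(B(y,r))}\lesssim T$. Since $\|v\|_{L^\infty(B(\tilde x,r))}\lesssim\e<1$ already, apply \zcref{prop:p1} to $v$ divided by a fixed constant instead.

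(ii) As you set it up, $U_0$ sits near $x$ only. It has to be a collar along all of $\Sigma$, like the paper's $V$. The hypothesis keeps $\g$ away from $\p U\setminus\Sigma$ but not from $\Sigma$, so far from $x$ the balls of radius $4r$ centred on $\g$ may cross $\Sigma$ where $w$ has not been extended.

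(iii) Several bounds you state with a priori constants in fact carry negative powers of $r$. This affects $w_0$ and $N\mu$ on $B(\tilde x,r)$, which lies at distance of order $r$ from $\Sigma$. It also affects $v$ where the chain crosses $\Sigma$: $N\mu$ is not pointwise small there, so the bound $\|v\|_{L^\infty}\lesssim T+\e$ must come from the mean value property and $\|N\mu\|_{L^2}\lesssim\e$. These losses are harmless, since $(r^{-\beta})^{\widetilde{\a}^{L/r}}$ stays bounded as $r\to 0$ and $r^{-\beta}\e$ is negligible in the regime $r\sim 1/\ln|\ln\e|$ that produces the double-logarithmic rate. They must still be tracked; the paper avoids them by using a ball of a priori radius $r_0$.

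(iv) Your claim that the argument also covers $y$ close to $\Sigma$ is false, and no argument can rescue it, because the estimate itself fails there. Take $U=B(0,1)$, $\Sigma=\p U$, and $w$ the Poisson extension of a bump of height $T$ and width $\delta$. By the scaling of $\dot H^{1/2}$ in two dimensions and the boundedness of the Dirichlet-to-Neumann map, the Cauchy data are $O(T\delta^{1/2})$. Yet $w(y)\to T$ as $y$ approaches the centre of the bump. The statement, and your proof, must therefore be restricted to $y$ whose distance to $\Sigma$ is bounded below. This is all that \zcref{prop:propa-small} needs, since there $y\in B({\bf v},h)\subset\O_{\d_0}$ lies at distance more than $\d_0$ from $\Sigma\subset\Gamma_0\subset\p\O$.
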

    \begin{proof}
        This follows from the arguments in Remark 6.3 of \cite{Alessandrini2009}. Using Lemma 6.1 and Theorem 6.2 in that paper, we can extend $w$ to an exterior neighborhood $V$ of $\Sigma$ such that the extension $\widetilde{w}$ satisfies $\|\widetilde{w}\|_{H^1(V)}\leq C\|w\|_{H^{1/2}(\Sigma)}$. Moreover, $\widetilde{w}$ solves $\Delta \widetilde{w}=\widetilde{f}$ in $U\cup V$ with a sufficiently small right-hand side $\widetilde{f}$. Then, interior elliptic regularity yields $\|\widetilde{w}\|_{L^\infty(B(x_0,r_0))}\leq \e$ for some $x_0\in V$ and $r_0>0$, where $r_0$ depends only on the Lipschitz character of $\Sigma$. Finally, we apply \zcref{prop:p1} to this modified equation to deduce the desired estimate.
    \end{proof}

    \medskip

    We now state the estimate needed for $u-u'$ and $\n(u-u')$ in the region where the analysis is carried out. Since $D$ and $D'$ are convex polyhedra, the Hausdorff distance $d_H(D,D')$ is attained at some vertex $\bf v$, which may be assumed to be a vertex of $D$ without loss of generality.

    \begin{proposition}\label{prop:propa-small}
        Let $u$ and $u'$ be solutions to \eqref{eq:calderon} corresponding to $D$ and $D'$, respectively. Let ${\bf v} \in \p D$ be a vertex of $D$, and let $h>0$ satisfy $B({\bf v},h)\subset\O_{\d_0}$ and $B({\bf v},h)\cap D'=\varnothing$. Then there exist $\a \in (0,1)$ and $T>0$, depending only on {\it a priori} data, such that for $0<\e<\e_m$,
        \begin{align*}
            \forall x\in B({\bf v},h)\setminus \overline{D},\quad & |(u-u')(x)|\leq T\left(\ln |\ln \e|\right)^{-\a};\\
            &|\n (u-u')(x)|\leq \frac{T}{\dist(x,\p D)}\left(\ln |\ln \e|\right)^{-\a}.
        \end{align*}
    \end{proposition}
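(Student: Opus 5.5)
We set $w=u-u'$. It is harmonic in $\O\setminus\overline{D\cup D'}$, and in particular in $\O\setminus(\overline{D\cup D'})\supset \O_{\d_0}^c\cup (B({\bf v},h)\setminus\overline D)$. The Cauchy data on $\Gamma_0$ are small: $w=0$ on $\p\O$ and $\|\p_\nu w\|_{H^{-1/2}(\Gamma_0)}\leq\e$. First, a global bound $\|w\|_{L^\infty(\O\setminus(D\cup D'))}\lesssim 1$ follows from the energy estimate $\|u\|_{H^1(\O)}\lesssim\|f\|_{H^{1/2}}$ together with the singular decomposition of \zcref{th:SingDecom}. That decomposition shows $u$ is bounded, since all exponents are positive and $u_{\rm reg}\in H^{5/2+\epsilon}$ embeds in $\Cont^1$ on each subdomain. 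This bound plays the role of $T$.

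The propagation proceeds in three stages.

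\emph{Stage 1.} In the collar $\O\setminus\overline{\O_{\d_0}}$, which is Lipschitz and avoids the inclusions, we apply \zcref{prop:aa2} with $\Sigma=\Gamma_0$ and a fixed radius $r\sim\d_0$. This gives $|w|\lesssim\e^{\beta}$ on a ball $B(x_0,r_0)$ well inside $\O$ near $\Gamma_0$, with $\beta$ an a priori exponent.

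\emph{Stage 2.} We then carry this smallness to points near ${\bf v}$ with \zcref{prop:p1}, applied along a chain inside the connected open set $G=\O\setminus\overline{D\cup D'}$. The difficulty is that $G$ is only known qualitatively near $D\cup D'$, so the balls must shrink as the chain approaches $\p D$. To approach a point $x$ at distance $d$ from $\p D$, we take the radius to be $r\approx d/4$ in the final portion. The number of balls then grows like $|\ln d|$ if we use a dyadic (cone-like) chain, exploiting convexity of $D$: the outward normal cone at ${\bf v}$ gives a truncated cone in $B({\bf v},h)\setminus \overline D$ along which $\dist(\cdot,\p D)$ grows linearly. Alternatively, we use a fixed-radius chain of length $\lesssim 1/d$. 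The estimate obtained is
\begin{align*}
|w(x)|\lesssim \e^{\beta\widetilde{\a}^{C/d}}\quad\text{or, with the dyadic chain,}\quad |w(x)|\lesssim \e^{\beta d^{C}}.
\end{align*}

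\emph{Stage 3.} We pass from the balls to the region near ${\bf v}$. For $x\in B({\bf v},h)\setminus\overline D$ with $d=\dist(x,\p D)$, we combine the small estimate with the a priori Hölder bound $|w(x)-w(x^*)|\lesssim |x-x^*|^{\mu_1}$ given by the singular decomposition. Here $x^*$ is a point at distance $\geq\tau$ from $\p D$ reachable in distance $\lesssim\tau$. Optimizing the threshold $\tau$ against $\e^{\beta\widetilde{\a}^{C/\tau}}$ yields $|w(x)|\lesssim \tau^{\mu_1}+\e^{\beta\widetilde\a^{C/\tau}}$. With $\tau\sim (\ln|\ln\e|)^{-1}$ the exponential term becomes $\exp(-\beta|\ln\e|^{1-c})$, which is negligible. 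The result is the double-logarithmic bound with $\a=\mu_1$, or a smaller exponent.

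For the gradient, we use interior estimates for harmonic functions on the ball $B(x,\dist(x,\p D)/2)$. This ball lies in $G$ because $B({\bf v},h)\cap D'=\varnothing$. We get $|\n w(x)|\leq \frac{C}{\dist(x,\p D)}\sup_{B}|w|$, and the sup is controlled by the first estimate, which holds on all of $B({\bf v},h)\setminus\overline D$.

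The main obstacle is Stage 2: controlling the geometry of the chain from $\Gamma_0$ to ${\bf v}$ uniformly in the unknown $D,D'$. We must keep every ball at distance $4r$ from $\p D\cup\p D'$ using only convexity, $D,D'\subset\O_{\d_0}$, and connectedness of $\O_{\d_0}$. We would first try to route the chain within $\O\setminus\overline{\O_{\d_0/2}}$ until it is near ${\bf v}$. Then we descend radially toward ${\bf v}$ along the exterior normal cone of $D$ at ${\bf v}$, with $h$ bounded by a priori data, following the scheme of Lemma 3.2–Proposition 3.4 in \cite{moi5}.
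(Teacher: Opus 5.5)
Your analytic skeleton is the paper's. You propagate smallness from $\Gamma_0$ along a tube of radius $r$ with Proposition~\ref{prop:aa2}, bridge the last $O(r)$ to $\partial D$ with an a priori H\"older modulus, and take $r\sim(\ln|\ln\varepsilon|)^{-1}$ so that $\varepsilon^{\widetilde{\alpha}^{L/r+1}}$ is negligible against $r^{\alpha}$; the paper imports exactly this from Propositions~3.4--3.7 of \cite{moi5}. However, Stage~2, which you flag as the main obstacle, is not resolved by the route you sketch, and as written that step fails. The vertex $\mathbf{v}$ lies at distance at least $\delta_0/2$, possibly much more, from the collar $\Omega\setminus\overline{\Omega_{\delta_0/2}}$, so the chain must cross $\Omega_{\delta_0}$. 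A radial descent to $\mathbf{v}$ along a direction of the exterior normal cone of $D$ stays away from $D$ but not from $D'$. The hypotheses only give $B(\mathbf{v},h)\cap D'=\varnothing$, and $D'$ may even intersect $D$. A wide flat box placed just beyond $B(\mathbf{v},h)$ in front of $\mathbf{v}$ blocks every ray of that cone. Convexity, $D,D'\subset\Omega_{\delta_0}$ and connectedness of $\Omega_{\delta_0}$ alone do not produce an inclusion-free corridor of controlled length and width.

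The missing ingredient is that $\mathbf{v}$ is the vertex at which $d_H(D,D')=\max_{x\in\overline{D}}\dist(x,D')$ is attained, after possibly exchanging $D$ and $D'$. This is how the proposition is used, and the paper's proof relies on it. Let $q\in\overline{D'}$ be the point nearest to $\mathbf{v}$ and $n=(\mathbf{v}-q)/|\mathbf{v}-q|$. Then $\dist(x,D')\geq(x-q)\cdot n$ and maximality force $D\subset\{(x-\mathbf{v})\cdot n\leq 0\}$, while $D'\subset\{(x-\mathbf{v})\cdot n\leq-|\mathbf{v}-q|\}$. This is the paper's plane $P$, tilted slightly so that it meets $\overline{D\cup D'}$ only at $\mathbf{v}$. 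Together with a plane $P'$ separating $B(\mathbf{v},h)$ from $D'$, it gives the inclusion-free region $(P_+\cap P'_+\cap\Omega_{\delta_0})\cup(\Omega\setminus\overline{\Omega_{\delta_0}})$, which has $\Gamma_0$ on its boundary. The paper draws the curve from $\Gamma_0$ to $\mathbf{v}$ inside this region, at distance $r_m=\frac13\min\{h,\delta_0\}$ from $\partial(D\cup D')$ outside $B(\mathbf{v},h)$. With this corridor the rest of your argument goes through.

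Two smaller points. First, your gradient bound differs from the paper's. You use the interior estimate on $B(x,\dist(x,\partial D)/2)$, whereas the paper proves H\"older continuity of $\dist(\cdot,\partial D)\nabla(u-u')$ from the singular decomposition and bridges again. Yours is simpler and needs nothing about $\nabla u$ near $\partial D$, provided that ball stays inside $B(\mathbf{v},h)$, a caveat the paper's statement shares. Second, do not take $\mu_1$ as the H\"older exponent: the available bound $|\nabla u_E|\lesssim r^{\mu_1-1}\rho^{-\mu_1}$ degenerates at the vertex. Use the De Giorgi--Nash--Moser exponent $\alpha_0$ on $\Omega_{\delta_0}$, as the paper does.
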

    \begin{proof}
        Using the convexity of $D$ and $D'$, there exist affine hyperplanes $P'$ and $P$ such that $P'$ separates $B({\bf v},h)$ from $D'$, and $P\cap \overline{(D\cup D')}={\bf v}$. Denote by $P'_\pm$ the two half-spaces determined by $P'$, with $D'\subset P'_-$. Similarly, let $P_-$ be the half-space determined by $P$ such that $D\cup D'\subset P_-$, and let $P_+$ be the opposite half-space. Then the region $V=(P_+\cap P'_+ \cap \O_{\d_0})\cup (\O\setminus \overline{\O_{\d_0}})$ is connected and satisfies $\Gamma_0\subset \p V$. Consequently, we can find a rectifiable curve $\g_{0,{\bf v}}$ connecting $\bf v$ to a point $x_0\in \Gamma_0$ such that $\g_{0,{\bf v}}$ stays at distance at least $r_m:=\frac{1}{3}\min\{h,\d_0\}$ from $\p\O\cup \p (D\cup D')\setminus (\Gamma_0 \cup B({\bf v},h))$. We also define the domain $U$ by
        \begin{align*}
            U=B({\bf v},h)\cup \setdef{y\in\O}{\dist(y,\g_{0, {\bf v}})<r_m}.
        \end{align*}        
        Then we choose $r<\min\{r_0,r_m\}$ small enough to apply \zcref{prop:aa2} to the harmonic function $u-u'$ in the domain $U$, along a curve obtained by slightly modifying $\g_{0,{\bf v}}$. Here the quantity $r_0$ depends only on the Lipschitz character of $\Sigma$, which comes from the proof of \zcref{prop:aa2}. It remains to derive H\"older estimates for $u-u'$ and its gradient on $U$ in terms of the distance to $\p D$.

        The well-known result of De Giorgi-Nash-Moser (Theorem 8.24 in \cite{Gilbarg-Trudinger}) states that solutions to strongly elliptic equations with bounded coefficients are H\"older continuous at interior points. Consequently, $u-u'\in\Cont^{\a_0}(\O_{\d_0})$ for some $\a_0>0$. We also introduce $T_0=\|u-u'\|_{\Cont^{\a_0}(\O_{\d_0})}$.
        
        For the gradient $\n (u-u')$, we expect blow-up near the boundary $\Gamma_{D\cup D'}$. To control the blow-up rate, we use the decomposition of $u$ and $u'$ into singular and regular parts as stated in \zcref{th:SingDecom}.
        
        We recall that the regular part $u_{\rm reg}$ has $H^{5/2+\epsilon}$ regularity in $D\cup (\O\setminus \overline{D})$. By applying Sobolev embedding $H^{3/2+\epsilon}\hookrightarrow \Cont^{\epsilon}$ in $\R^3$, we can deduce that $\n u_{\rm reg}$ is H\"older continuous in $U$. 
        
        For the singular parts, it is sufficient to work in local spherical or cylindrical coordinates near vertices or edges. Using the estimates of the singularity exponents $\lm$ and $\mu$, we deduce that the singularity of $\n u_{\bf v}$ is at most of type $\rho^{\lm_1-1}$ and the singularity of $\n u_E$ is at most of type $r^{\mu_1-1}$, as shown in \zcref{lemma:esti-grad-edge}. Consequently, the function $x\mapsto \dist(x,\p D)\n u(x)$ is $\Cont^{\a_1}$ H\"older continuous in $U$ with $0<\a_1\leq \min\{\epsilon,\lm_1, \mu_1\}$.
        
        The same arguments hold for $u'$. Hence, we can deduce that $\dist(x,\p D)\n (u-u')(x)$ is H\"older continuous in $U$, and thus we can set $T_1=\|\dist(\cdot,\p D)\n (u-u')\|_{\Cont^{\a_1}(U)}$. Next, we set $\a=\min\{\a_0,\a_1\}$ and $T=\max\{T_0,T_1\}$ and remark that those parameters depend only on {\it a priori} data. 
        
        The remainder of the proof consists in estimating \eqref{eq:propa-small-holder} as $r\rightarrow0^+$ by using the H\"older continuity of these functions. The argument follows Proposition 3.4, Corollary 3.5, and Proposition 3.7 in \cite{moi5}. The value of $\e_m$ is given by (3.4) in \cite{moi5}.
    \end{proof}

    \section{Crucial local integral estimates}\label{sec:integral}

    \subsection{Geometrical Settings}

        From now on, $D$ and $D'$ denote two admissible polyhedra. By an elementary geometric argument, the Hausdorff distance $d_H(D,D')$ is attained at some vertex ${\bf v}$ of $D$ or $D'$. We carry out the analysis in a local cylindrical region around one of the edges adjacent to ${\bf v}$; see \zcref{fig:local-polyhedron-alt}. The following points describe the local geometric setting used in the analysis; see also \zcref{fig:local-cylindrical}.
        \begin{itemize}
            \item ${\bf v}$ is a vertex of $D$ which achieves the Hausdorff distance.
            \item $h>0$ is chosen small enough such that the region $\widetilde{\O}$ constructed below is at distance at least $h$ from the other polyhedron $D'$.
            \item $E$ is an edge of $D$ linked to the vertex ${\bf v}$.
            \item The opening of the edge $E$ is $a\in [a_m,a_M] \subset (0,\pi)$.
            \item $O$ is a point on $E$ with a distance $\rho>0$ to the vertex ${\bf v}$.
            \item $\Pi$ is the plane perpendicular to $E$ and passing through $O$.
            \item $(r,\t,z)$ denotes cylindrical coordinates with $z$-axis coinciding with $E$ and origin at $O$.
        \end{itemize}

    \begin{figure}[ht]
        \centering
        \begin{tikzpicture}[
            scale=2,
            >=Latex,
            line join=round,
            line cap=round,
            x={(1cm,0cm)},
            y={(0.28cm,1.00cm)},
            z={(-1.18cm,0.34cm)}
        ]
            \coordinate (V) at (0,0,0);
            \coordinate (A) at (2.55,0.22,0.32);
            \coordinate (B) at (1.65,2.02,0.58);
            \coordinate (C) at (-0.15,2.35,0.25);
            \coordinate (D) at (-1.95,1.25,-0.30);
            \coordinate (E) at (-1.28,-1.00,-0.48);
            \coordinate (O) at ($(V)!0.52!(B)$);
            \coordinate (Oc) at ($(V)!0.72!(B)$);

            \coordinate (P1) at ($(O)+(0.376,-0.231,-0.264)$);
            \coordinate (P2) at ($(O)+(-0.190,0.231,-0.264)$);
            \coordinate (P3) at ($(O)+(-0.376,0.231,0.264)$);
            \coordinate (P4) at ($(O)+(0.190,-0.231,0.264)$);

            \fill[black!62,opacity=0.66] (V) -- (D) -- (E) -- cycle;
            \fill[black!48,opacity=0.66] (V) -- (C) -- (D) -- cycle;
            \fill[black!56,opacity=0.68] (V) -- (E) -- (A) -- cycle;
            \fill[black!32,opacity=0.72] (V) -- (B) -- (C) -- cycle;
            \fill[black!18,opacity=0.76] (V) -- (A) -- (B) -- cycle;

            \fill[gray!15,opacity=0.45] (P1) -- (P2) -- (P3) -- (P4) -- cycle;
            \draw[gray!55!black] (P1) -- (P2) -- (P3) -- (P4) -- cycle;
            \begin{scope}[shift={(O)},x={($(P1)-(O)$)},y={($(P2)-(O)$)}]
                \draw[gray!30!black,thick] (0,0) circle[radius=0.48];
                \coordinate (C1) at (0.48,0);
                \coordinate (C2) at (0,0.48);
                \coordinate (C3) at (-0.48,0);
                \coordinate (C4) at (0,-0.48);
            \end{scope}
            \begin{scope}[shift={(Oc)},x={($(P1)-(O)$)},y={($(P2)-(O)$)}]
                \draw[densely dashed,gray!30!black,thick] (0,0) circle[radius=0.48];
                \coordinate (D1) at (0.48,0);
                \coordinate (D2) at (0,0.48);
                \coordinate (D3) at (-0.48,0);
                \coordinate (D4) at (0,-0.48);
            \end{scope}
            \fill[gray!22,opacity=0.18] (C1) -- (C2) -- (D2) -- (D1) -- cycle;
            \fill[gray!22,opacity=0.18] (C2) -- (C3) -- (D3) -- (D2) -- cycle;
            \fill[gray!22,opacity=0.18] (C3) -- (C4) -- (D4) -- (D3) -- cycle;
            \fill[gray!22,opacity=0.18] (C4) -- (C1) -- (D1) -- (D4) -- cycle;
            \draw[gray!30!black] (C1) -- (D1);
            \draw[gray!30!black] (C2) -- (D2);
            \draw[gray!30!black] (C3) -- (D3);
            \draw[gray!30!black] (C4) -- (D4);
            \node[gray!55!black] at ($(P1)+(0.03,0.10,0)$) {\small $\Pi$};

            \draw[gray!45!black] (A) -- (B) -- (C);
            \draw[densely dashed,gray!45!black] (C) -- (D) -- (E);
            \draw[gray!45!black] (E) -- (A);

            \draw[very thick,->] (V) -- (A);
            \draw[very thick,->] (V) -- (B);
            \draw[very thick,->] (V) -- (C);
            \draw[very thick,->] (V) -- (D);
            \draw[very thick,->] (V) -- (E);

            \fill (V) circle (1.55pt);
            \node at ($(V)!0.5!(A)!0.5!(E)+(0.00,0.3,0)$) {$\mathbf{v}$};

            \node at ($(V)!0.57!(B)+(0.14,0.02,0)$) {\small $E$};

            \node[black!70] at ($(V)!0.55!(A)!0.45!(B)+(0.02,0.08,0)$) {\small $F_+$};
            \node[black!70] at ($(V)!0.55!(B)!0.45!(C)+(0.00,0.10,0)$) {\small $F_-$};
            
        \end{tikzpicture}
        \caption{Polyhedral corner at vertex~$\mathbf{v}$.}
        \label{fig:local-polyhedron-alt}
    \end{figure}

        \begin{figure}[ht]
            \centering
            \begin{tikzpicture}[scale=1.0,>=Latex,line join=round,line cap=round]
                \def\ang{64}          
                \def\edgelen{4.2}     
                \def\rhovv{1.2}       
                \def\rr{1.6}          
                \def\thP{35}          

                \pgfmathsetmacro{\fx}{0.75}   
                \pgfmathsetmacro{\fy}{0.35}   

                \coordinate (V) at (0,0);
                \coordinate (O) at (0,\rhovv);
                
                \draw[thick,->] (V) -- (0,\edgelen+0.5);
                \node[right] at (0.08,\edgelen+0.4) {$E,\ z$};

                \fill (V) circle (1.3pt);
                \node[left] at (V) {$\mathbf{v}$};

                \fill (O) circle (1.3pt);
                \node[left] at (-0.08,\rhovv) {$O$};

                \draw[<->,densely dashed] (0.25,0) -- (0.25,\rhovv);
                \node[right] at (0.25,0.5*\rhovv) {\small $\rho$};

                \pgfmathsetmacro{\Gpx}{\fx*cos(\ang)}
                \pgfmathsetmacro{\Gpy}{\fy*sin(\ang)}
                \pgfmathsetmacro{\Gmx}{\fx*cos(-\ang)}
                \pgfmathsetmacro{\Gmy}{\fy*sin(-\ang)}

                \def\flen{2.0}  
                \def\zht{2.5}   

                \pgfmathsetmacro{\fGpx}{\flen*\Gpx}
                \pgfmathsetmacro{\fGpy}{\flen*\Gpy}
                \pgfmathsetmacro{\fGmx}{\flen*\Gmx}
                \pgfmathsetmacro{\fGmy}{\flen*\Gmy}

                \fill[red!12,opacity=0.6]
                    (O) -- ($(O)+(\fGpx,\fGpy)$)
                    -- ($(O)+(0,\zht)+(\fGpx,\fGpy)$)
                    -- ($(O)+(0,\zht)$) -- cycle;
                \fill[red!18,opacity=0.6]
                    (O) -- ($(O)+(\fGmx,\fGmy)$)
                    -- ($(O)+(0,\zht)+(\fGmx,\fGmy)$)
                    -- ($(O)+(0,\zht)$) -- cycle;

                \draw[thick,red!70!black] (O) -- ($(O)+(\fGpx,\fGpy)$);
                \draw[thick,red!70!black] (O) -- ($(O)+(\fGmx,\fGmy)$);
                \draw[thick,red!70!black] ($(O)+(0,\zht)$)
                    -- ($(O)+(0,\zht)+(\fGpx,\fGpy)$);
                \draw[thick,red!70!black] ($(O)+(0,\zht)$)
                    -- ($(O)+(0,\zht)+(\fGmx,\fGmy)$);
                \draw[red!70!black] ($(O)+(\fGpx,\fGpy)$)
                    -- ($(O)+(0,\zht)+(\fGpx,\fGpy)$);
                \draw[red!70!black] ($(O)+(\fGmx,\fGmy)$)
                    -- ($(O)+(0,\zht)+(\fGmx,\fGmy)$);

                \pgfmathsetmacro{\GpLabx}{0.5*\flen*\Gpx}
                \pgfmathsetmacro{\GpLaby}{0.5*\flen*\Gpy}
                \pgfmathsetmacro{\GmLabx}{0.5*\flen*\Gmx}
                \pgfmathsetmacro{\GmLaby}{0.5*\flen*\Gmy}
                \pgfmathsetmacro{\halfzht}{0.5*\zht}
                \node[red!70!black] at
                    ($(O)+(\GpLabx,\GpLaby)+(0,\halfzht+0.18)$)
                    {\small $F_+$};
                \node[red!70!black] at
                    ($(O)+(\GmLabx,\GmLaby)+(0,\halfzht-0.18)$)
                    {\small $F_-$};

                \pgfmathsetmacro{\pihalfx}{2.2*\fx}
                \pgfmathsetmacro{\pihalfy}{1.3*\fy}
                \pgfmathsetmacro{\pilabx}{2.1*\fx}
                \fill[blue!8,opacity=0.5]
                    ($(O)+(-\pihalfx,-\pihalfy-0.15)$) rectangle
                    ($(O)+(\pihalfx,\pihalfy+0.15)$);
                \draw[densely dashed]
                    ($(O)+(-\pihalfx,0)$) -- ($(O)+(\pihalfx,0)$);
                \draw[densely dashed]
                    ($(O)+(0,-\pihalfy-0.15)$) -- ($(O)+(0,\pihalfy+0.15)$);
                \node[above left] at ($(O)+(-\pilabx,0.05)$) {\small $\Pi$};

                \pgfmathsetmacro{\xhatlen}{1.1*\fx}
                \pgfmathsetmacro{\yhatlen}{1.1*\fy+0.12}
                \draw[->,blue!60!black] (O) -- ($(O)+(\xhatlen,0)$);
                \node[blue!60!black,below] at ($(O)+(\xhatlen,-0.02)$)
                    {\small $\hat{x}$};
                \draw[->,blue!60!black] (O) -- ($(O)+(0,\yhatlen)$);
                \node[blue!60!black,right] at ($(O)+(0.06,\yhatlen)$)
                    {\small $\hat{y}$};

                \pgfmathsetmacro{\aarcRx}{0.7*\fx}
                \pgfmathsetmacro{\aarcRy}{0.7*\fy}
                \pgfmathsetmacro{\aarcSx}{0.7*\Gmx}
                \pgfmathsetmacro{\aarcSy}{0.7*\Gmy}
                \draw[->] ($(O)+(\aarcSx,\aarcSy)$)
                    arc[start angle={-\ang},end angle={\ang},
                        x radius=\aarcRx,y radius=\aarcRy];
                \pgfmathsetmacro{\alabx}{0.95*\fx}
                \node at ($(O)+(\alabx,0.0)$) {\small $a$};
            \end{tikzpicture}
            \caption{Local cylindrical coordinates $(r,\theta,z)$ around the edge~$E$.
            The origin~$O$ lies on~$E$ at distance~$\rho$ from the vertex~$\mathbf{v}$.
            The plane~$\Pi$ is perpendicular to~$E$ at~$O$;
            the two faces $F_\pm$ meet along~$E$ with opening angle~$a$.}
            \label{fig:local-cylindrical}
        \end{figure}

        Next, we describe the region and surfaces involved in the integral estimates. We first define contours in the plane $\Pi$; the corresponding surfaces are obtained by translating these contours along the edge $E$. See \zcref{fig:cross-section}.

        \begin{itemize}
            \item $\Gamma_{\pm}:=\p D \cap \Pi$ denotes the two sides of the corner at $O$ in the plane $\Pi$. In local cylindrical coordinates, we impose that
            \begin{align*}
                \Gamma_{\pm}=\setdef{(r,\t,0)}{0<r<h,\; \t=\pm a/2}.
            \end{align*}
            \item From the choice made in the previous point, we also introduce the unit vectors $\hat{x}$ and $\hat{y}$ in $\Pi$ defined by $\t=0$ and $\t=\pi/2$, respectively.
            \item Let $\tau>0$ be a parameter, whose value will be chosen carefully to derive the desired stability estimate. Define $\p S^e$ as the contour in $\Pi \setminus D$ such that each point on $\p S^e$ has distance $\ds \frac{1}{\tau}$ to the sides $\Gamma_{\pm}$.
            \item The contour $\p S^e$ intersects the circle $\setdef{(r,\t,0)}{r=h}$ at two points. We denote by $\p S^i$ the arc of this circle between these intersection points. In local cylindrical coordinates, we have
            \begin{align*}
                 \p S^i=\setdef{(r,\t,0)}{r=h,\; -\frac{a}{2}-\arcsin(\frac{1}{\tau h})\leq \t\leq \frac{a}{2}+\arcsin(\frac{1}{\tau h})}.
            \end{align*} 
            \item $A_-$ denotes the region enclosed by $\p S^e$ and $\p S^i$. Also, $A_-^i$ and $A_-^e$ denote the interior and exterior parts of $A_-$ with respect to $D$, respectively.
            \item By translation along the edge $E$ with a distance $h_z>0$, we introduce the following surfaces and regions:
            \begin{align*}
                F_\pm &:=\setdef{(r,\t,z)}{(r,\t,0)\in \Gamma_\pm, \; 0<z<h_z},\\
                S^e &:=\setdef{(r,\t,z)}{(r,\t,0)\in \p S^e, \; 0<z<h_z},\\
                S^i &:=\setdef{(r,\t,z)}{(r,\t,0)\in \p S^i, \; 0<z<h_z},\\
                A_+^{i,e} &:=\setdef{(r,\t,h_z)}{(r,\t,0)\in A_-^{i,e}},\\
                \widetilde{\O}&:=\setdef{(r,\t,z)}{(r,\t,0)\in A_-, \; 0<z<h_z},\\
                \widetilde{D}&:=D\cap \widetilde{\O}=\setdef{(r,\t,z)}{(r,\t,0)\in A_-^i, \; 0<z<h_z}.
            \end{align*}
            \item The parameters $h,\rho$, and $h_z$ are chosen such that 
            \begin{align}                
                \rho+h+h_z &\leq \ell, \label{eq:constraint1}\\
                \frac{h}{\rho}&\leq \ell, \label{eq:constraint2}
            \end{align}
             where $\ell$ is given in \eqref{eq:cutoff-support}. With this choice, only singular functions associated with the vertex $\bf v$ and the edge $E$ appear in the local integral estimates.
        \end{itemize}

        \begin{figure}[ht]
            \centering
            \begin{tikzpicture}[scale=1.1,>=Latex,line cap=round,line join=round]
                \def\ang{64}        
                \def\R{2.4}         
                \def\dsep{0.38}     
           
                \pgfmathsetmacro{\thext}{\ang + asin(\dsep/\R)}

                \fill[gray!20,opacity=0.7]
                    ({\R*cos(\thext)},{\R*sin(\thext)})
                    -- ({-\dsep*sin(\ang)},{\dsep*cos(\ang)})
                    arc[start angle={90+\ang},end angle={270-\ang},radius=\dsep]
                    -- ({\R*cos(-\thext)},{\R*sin(-\thext)})
                    arc[start angle={-\thext},end angle={\thext},radius=\R]
                    -- cycle;
                \pgfmathsetmacro{\Amid}{0.5*(180+\thext)}
                \node at ({0.7*\R*cos(\Amid)},{0.7*\R*sin(\Amid)}) {$A_-$};

                \fill[red!10] (0,0)
                    -- ({\R*cos(\ang)},{\R*sin(\ang)})
                    arc[start angle=\ang,end angle=-\ang,radius=\R] -- cycle;
                \node[red!60!black] at ({0.55*\R},0) {\small $D\!\cap\!\Pi$};

                \draw[very thick] (0,0) -- ({\R*cos(\ang)},{\R*sin(\ang)});
                \draw[very thick] (0,0) -- ({\R*cos(-\ang)},{\R*sin(-\ang)});
                \node at ({1.25*cos(\ang)-0.22*sin(\ang)},{1.25*sin(\ang)+0.22*cos(\ang)})
                    {$\Gamma_+$};
                \node at ({1.25*cos(-\ang)-0.22*sin(-\ang)},{1.25*sin(-\ang)+0.22*cos(-\ang)})
                    {$\Gamma_-$};

                \draw[densely dashed] (0,0) circle (\R);
                \pgfmathsetmacro{\rhlabel}{180+\thext+15}
                \node at ({\R*cos(\rhlabel)+0.22*cos(\rhlabel)},{\R*sin(\rhlabel)+0.22*sin(\rhlabel)})
                    {\small $r\!=\!h$};

                \draw[very thick,blue!70!black]
                    ({\R*cos(-\thext)},{\R*sin(-\thext)})
                    arc[start angle={-\thext},end angle={\thext},radius=\R];
                \node[blue!70!black] at ({\R+0.38},0) {$\partial S^{i}$};

                \draw[very thick,red!70!black]
                    ({-\dsep*sin(\ang)},{\dsep*cos(\ang)})
                    -- ({\R*cos(\thext)},{\R*sin(\thext)});
                \draw[very thick,red!70!black]
                    ({-\dsep*sin(\ang)},{-\dsep*cos(\ang)})
                    -- ({\R*cos(-\thext)},{\R*sin(-\thext)});
                \draw[very thick,red!70!black]
                    ({-\dsep*sin(\ang)},{\dsep*cos(\ang)})
                    arc[start angle={90+\ang},end angle={270-\ang},radius=\dsep];
                \pgfmathsetmacro{\Semid}{90+\ang+0.5*(180-2*\ang)}
                \node[red!70!black] at ({(\dsep+0.35)*cos(\Semid)},{(\dsep+0.35)*sin(\Semid)})
                    {$\partial S^{e}$};

                \draw[<->] ({1.6*cos(\ang)},{1.6*sin(\ang)})
                    -- ({1.6*cos(\ang)-\dsep*sin(\ang)},
                        {1.6*sin(\ang)+\dsep*cos(\ang)});
                \node[above left] at
                    ({1.6*cos(\ang)-0.5*\dsep*sin(\ang)},
                     {1.6*sin(\ang)+0.5*\dsep*cos(\ang)})
                    {\small $\frac{1}{\tau}$};

                \draw[->] (0.55,0)
                    arc[start angle=0,end angle=\ang,radius=0.55];
                \draw[->] (0.55,0)
                    arc[start angle=0,end angle=-\ang,radius=0.55];
                \node at ({0.82*cos(0.5*\ang)},{0.82*sin(0.5*\ang)})
                    {\small $\frac{a}{2}$};
                \node at ({0.82*cos(-0.5*\ang)},{0.82*sin(-0.5*\ang)})
                    {\small $\frac{a}{2}$};

                \fill (0,0) circle (1.2pt);
                \node[below left] at (-0.05,-0.05) {$O$};
                \draw[->] (0,0) -- (2.95,0) node[below right] {$\hat{x}$};
                \draw[->] (0,0) -- (0,2.0) node[left] {$\hat{y}$};

                \node at (-1.85,1.85) {$\Pi$};
            \end{tikzpicture}
            \caption{Cross-section in the plane~$\Pi$ perpendicular to~$E$ at~$O$.
            The two sides $\Gamma_\pm$ (thick black) enclose the opening angle~$a$.
            The contour~$\partial S^e$ (red) lies at distance~$1/\tau$ from~$\Gamma_\pm$ on the exterior side,
            $\partial S^i$ (blue) is an arc on the circle $r=h$,
            and the shaded region is~$A_-$.}
            \label{fig:cross-section}
        \end{figure}

        \subsection{Integral Identity and Estimates}

        The following integral identity is the key ingredient for our stability estimate.

        \begin{proposition}\label{prop:int-identity}
            Let $u_0\in H^1_{loc}(\R^3)$ be harmonic. Then
            \begin{align}\label{eq:int-id}
                (k-1)\int_{F_{\pm}}u_0\p_\nu u d\sigma= &\int_{S^e \cup S^i\cup A_{\pm}} \left(u_0\p_\nu (u-u')-(u-u')\p_\nu u_0\right)d\sigma.
            \end{align}
        \end{proposition}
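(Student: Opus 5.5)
The identity will come from Green's second identity applied separately in the two pieces $\widetilde{D}$ and $\widetilde{\O}\setminus\overline{\widetilde{D}}$, together with the transmission conditions in \eqref{eq:transmission}. Throughout, I read $F_\pm$ as $F_+\cup F_-$ and $A_\pm$ as $A_+\cup A_-$ (the top and bottom caps). On $\p\widetilde{\O}$ I take the outward normal of $\widetilde{\O}$. On $F_\pm$ I take $\nu$ to be the outward normal of $D$, and $\p_\nu u$ there is the interior trace $\p_\nu u|_-$.

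First I record which functions are harmonic where. By construction $\widetilde{\O}$ is at distance at least $h$ from $D'$, so $u'$ is harmonic in a neighbourhood of $\overline{\widetilde{\O}}$ and is smooth across $F_\pm$. The function $u$ is harmonic in $\widetilde{D}$ and in $\widetilde{\O}\setminus\overline{\widetilde{D}}$. The test function $u_0$ is harmonic everywhere. The boundary of the exterior piece $\widetilde{\O}\setminus\overline{\widetilde{D}}$ consists of $S^e$, $S^i\setminus\overline D$, $A_\pm^e$ and $F_\pm$. On $F_\pm$ the outward normal of this piece is $-\nu$. The boundary of $\widetilde{D}$ consists of $F_\pm$ (with normal $\nu$), $S^i\cap\overline D$ and $A_\pm^i$.

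I then write three instances of Green's second identity. The first is for the pair $(u_0,\,u-u')$ on the exterior piece. The second and third are for the pairs $(u_0,u)$ and $(u_0,u')$ on $\widetilde{D}$. Each boundary integral $\int(u_0\p_n v-v\p_n u_0)$ vanishes because both functions are harmonic in the piece. I add the first identity to the difference of the second and third.

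On $S^e\cup S^i\cup A_\pm$, the exterior and interior contributions combine into exactly $\int_{S^e\cup S^i\cup A_\pm}\big(u_0\p_\nu(u-u')-(u-u')\p_\nu u_0\big)\,d\sigma$. On $F_\pm$, the $u'$ terms from the two sides cancel, because $u'$ and $\p_\nu u'$ are continuous across $F_\pm$. The terms $u\,\p_\nu u_0$ also cancel, by continuity of $u$. What remains on $F_\pm$ is
\begin{align*}
\int_{F_\pm}u_0\big(\p_\nu u|_--\p_\nu u|_+\big)\,d\sigma=(1-k)\int_{F_\pm}u_0\,\p_\nu u|_-\,d\sigma,
\end{align*}
using $\p_\nu u|_+=k\,\p_\nu u|_-$. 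Moving this term to the other side gives \eqref{eq:int-id}.

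The main obstacle is justifying Green's formula on domains whose boundary contains the edge $E$, where $\n u$ blows up like $r^{\mu_1-1}$, and which have corners where $F_\pm$ meets $A_\pm$ and $S^i$. I would handle this by excising a thin cylinder $\{r<\delta\}$ around $E$. On the truncated domains, $u$ is $H^2$ up to the flat faces by \zcref{th:SingDecom}, since the regular part lies in $H^{5/2+\epsilon}$ and the singular part is smooth away from $E$. Green's formula therefore holds on these Lipschitz domains, and the transmission computation above goes through unchanged. It remains to let $\delta\to0$. The new boundary piece $\{r=\delta\}$ has area $O(\delta)$, and on it $|u_0\,\p_\nu u|\lesssim \delta^{\mu_1-1}$ by the edge singular structure (cf. \zcref{lemma:esti-grad-edge}) and the constraints \eqref{eq:constraint1}, \eqref{eq:constraint2}, which keep the vertex singularity away. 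Its contribution is therefore $O(\delta^{\mu_1})\to0$. Since $\mu_1>1/2>0$, $\p_\nu u|_-\in L^1(F_\pm)$, so the face integrals converge by dominated convergence.
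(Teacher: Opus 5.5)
Your proposal is correct and follows the paper's route: Green's second identity on $\widetilde{D}$ and on $\widetilde{\O}\setminus\overline{\widetilde{D}}$, combined with the transmission conditions, is exactly the paper's one-line proof, and your bookkeeping is consistent with the stated identity (interior trace $\p_\nu u|_-$ on $F_+\cup F_-$, both caps $A_\pm$, giving the factor $k-1$). Your truncation near $E$ ($\{r<\delta\}$ lies inside $\widetilde{\O}$ for $\delta<1/\tau$, and the lateral contribution is $O(\delta^{\mu_1})$) is a valid justification of the Green's formula step, which the paper leaves implicit.
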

        \begin{proof}
            The proof follows by applying Green's formula in the regions $\widetilde{\O}\setminus \widetilde{D}$ and $\widetilde{D}$, together with the transmission conditions in \eqref{eq:transmission}.
        \end{proof}

        \medskip

        We now introduce complex geometrical optics (CGO) solutions to the Laplace equation. Generally speaking, CGO solutions are functions of the form $u_0(x)=e^{\zeta\cdot x}(1+\psi(x))$, where $\zeta\in \C^3$ is a complex vector satisfying $\zeta\cdot \zeta=0$. In our case, for all $\tau>0$, we define
        \begin{align}\label{eq:CGO}
            u_0(x) = e^{\zeta\cdot (x-x_O)},\quad {\rm where}\quad \zeta =\tau(-\hat{x}+i\hat{y})\in \C^3.            
        \end{align}
        Here, $x_O$ denotes the coordinates of the point $O$. The following estimates are immediate: For $x=(r,\t,z)$ in local cylindrical coordinates,
        \begin{align*}
            &\forall x\in \widetilde{\O}, \quad |u_0(x)|=e^{-\tau r\cos \t} \leq e^{\tau \cdot 1/\tau}=e,\\
            &\forall x\in \overline{\widetilde{D}}, \quad |u_0(x)|=e^{-\tau r\cos \t} \leq e^{-\tau \widetilde{a} r}\leq e^{-\tau \widetilde{a}h}.
        \end{align*}
        Here we set $\ds \widetilde{a}:=\cos(\frac{a}{2})>0$, and the corresponding estimates for $\n u_0$ can be obtained by multiplying by $\tau$.
        \medskip

        \begin{proposition}\label{prop:lowerbound}
            The left-hand side of the integral identity \eqref{eq:int-id} admits the following decomposition:
            \begin{align*}
                &\int_{F_\pm} u_0\p_\nu u d\sigma = I_{F,1} + I_{F,2,\infty}-I_{F,2,c} + I_{F,3} + I_{F,4} + I_{F,5}.
            \end{align*}
            Furthermore, for $\rho,h,h_z$ satisfying \eqref{eq:constraint1} and \eqref{eq:constraint2} and being sufficiently small, the following estimates hold for each term,
            \begin{align*}                
                &|I_{F,1}|\lesssim  \rho^{\lm_1-1}h_z\tau^{-1}, &  &|I_{F,2,\infty}| \gtrsim \rho^{\lm_1+\e_*-\mu_1} h_z \tau^{-\mu_1},\\
                &|I_{F,2,c}| \lesssim (\rho+h_z)^{1-\mu_1}\tau^{-\mu_1} e^{-\frac{\tau \widetilde{a} h}{2}}, &  &|I_{F,3}| \lesssim \rho^{-(s-\mu_1-1/2)}h_z \tau^{-(s-1/2)},\\
                &|I_{F,4}| \lesssim \rho^{-\widetilde{\mu_2}}h_z\tau^{-\widetilde{\mu_2}},&  &|I_{F,5}| \lesssim h_z\tau^{-1}.
            \end{align*}
            Here, $\e_*>0$ is a small constant, $\widetilde{\mu_2}=\mu_2$ if $s-\mu_2\geq 1/2$ and $\widetilde{\mu_2}=s-1/2$ if $s-\mu_2< 1/2$.
        \end{proposition}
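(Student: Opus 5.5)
Since $\sigma$ jumps across $F_\pm$, I take $\p_\nu u$ to be the exterior trace $\p_\nu u|_+$ (the interior one differs by the factor $k$, which is harmless). I substitute the singular decomposition of \zcref{th:SingDecom} into $\int_{F_\pm}u_0\p_\nu u\,d\sigma$. By \eqref{eq:constraint1}--\eqref{eq:constraint2}, only the vertex singular part $u_{\bf v}$, the edge singular part $u_E$ and $u_{\rm reg}$ are present on $F_\pm$. The edge part is split further: $u_E=\eta\,\tilde r^{\mu_1}\mathcal{K}[\g_{E,\mu_1}]\vf_{E,\mu_1}+\sum_{\mu\ge\mu_2}(\cdots)$. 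In the first-exponent term I replace $\mathcal{K}[\g_{E,\mu_1}](\tilde r,z)$ by its trace $\g_{E,\mu_1}(z)$ and, via \eqref{eq:asym-coef}, by the leading vertex interaction $a^{{\bf v},\lm_1}_{E,\mu_1}(\rho+z)^{\lm_1}$. This produces the six terms:
\begin{itemize}
\item $I_{F,1}$ is the contribution of $u_{\bf v}$;
\item $I_{F,2,\infty}$ is the principal edge term, integrated over the full face $0<r<\infty$;
\item $I_{F,2,c}$ is the correction that removes the part $r>h$;
\item $I_{F,3}$ collects the remainders, namely $\mathcal{K}[\g]-\g$ and the $o(\rho^{\lm_1})$ term in \eqref{eq:asym-coef};
\item $I_{F,4}$ is the sum over the higher edge exponents $\mu\ge\mu_2$;
\item $I_{F,5}$ is the contribution of $u_{\rm reg}$.
\end{itemize}
The basic computation used throughout is that on $F_\pm$ one has $|u_0|=e^{-\tau\widetilde a r}$, so that $\int_0^h r^{\beta}e^{-\tau\widetilde a r}dr\lesssim\tau^{-\beta-1}$ for $\beta>-1$.

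\textbf{Upper bounds.} For $I_{F,1}$: on $F_\pm$ we are at distance about $\rho$ from ${\bf v}$, and $\n u_{\bf v}=O(\rho^{\lm_1-1})$. Integrating $e^{-\tau\widetilde a r}$ over $r$ and over $z\in(0,h_z)$ gives $\rho^{\lm_1-1}h_z\tau^{-1}$. For $I_{F,5}$: $\n u_{\rm reg}$ is bounded, since $u_{\rm reg}\in H^{5/2+\epsilon}$ embeds into $\Cont^{1,\epsilon}$, which gives $h_z\tau^{-1}$. For $I_{F,4}$: the terms with $\p_\nu(\tilde r^{\mu}\vf)\sim r^{\mu-1}\d^{-\mu}$ give $\tau^{-\mu}\rho^{-\mu}h_z$. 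The coefficients are controlled through \eqref{eq:esti-edge-coef}, using the weighted Sobolev embedding; when $s-\mu_2<1/2$ the pointwise regularity of the coefficient is limited, and one only gets the exponent $s-1/2$, which yields $\widetilde{\mu_2}$. For $I_{F,3}$: $\mathcal{K}[\g]-\g$ vanishes to order $(s-\mu_1-1/2)$ in $\tilde r$, again by \eqref{eq:esti-edge-coef} and Sobolev embedding in $z$. This gives an integrand of size $r^{\mu_1-1}\tilde r^{\,s-\mu_1-1/2}$, hence $\rho^{-(s-\mu_1-1/2)}h_z\tau^{-(s-1/2)}$; the asymptotic remainder is of lower order in $\rho$ and can be absorbed. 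For $I_{F,2,c}$: over $r>h$, write $e^{-\tau\widetilde a r}\le e^{-\tau\widetilde a h/2}e^{-\tau\widetilde a r/2}$; integrating the rest gives $\tau^{-\mu_1}$, while the $z$-weight $(\rho+z)^{\lm_1}\d^{-\mu_1}\lesssim(\rho+h_z)^{1-\mu_1}$ since $\lm_1>1$.

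\textbf{Lower bound, the main obstacle.} For $I_{F,2,\infty}$, explicitly
\[
I_{F,2,\infty}=a^{{\bf v},\lm_1}_{E,\mu_1}\,\vf'_{E,\mu_1}(\pm a/2)\int_0^{h_z}(\rho+z)^{\lm_1}\d(z)^{-\mu_1}\,dz\int_0^\infty r^{\mu_1-1}e^{-\tau r e^{\pm ia/2}}\,dr .
\]
The radial integral equals $\Gamma(\mu_1)\tau^{-\mu_1}e^{\mp i\mu_1 a/2}$ by contour rotation, which is legitimate because $a<\pi$. The $z$-integral is comparable to $\rho^{\lm_1-\mu_1}h_z$ up to the $\rho^{\e_*}$ loss allowed in the statement; this loss comes from the choice of $\d$ near the vertex and from the interaction expansion. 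Nonvanishing of $a^{{\bf v},\lm_1}_{E,\mu_1}$ is \zcref{assumption:singularity}(4).

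The delicate point is to show that the angular factor $\vf'_{E,\mu_1}(\pm a/2)\,e^{\mp i\mu_1a/2}$ does not vanish, and does not cancel between $F_+$ and $F_-$ if the two faces are summed. I would first try to prove this directly: write $\vf=\cos(\mu_1\t+\phi_\pm)$ and use \eqref{eq:edge-expo} together with $\tfrac12<\mu_1<1$. The factor $e^{\mp i\mu_1 a/2}$ has nonzero imaginary part, since $\mu_1 a\in(0,\pi)$. So nonvanishing reduces to $\p_\t\vf(\pm a/2)\neq0$, which follows from the transmission condition. Indeed, if this derivative were zero, the Neumann data of $\vf$ would vanish on both sides of the interface, and the corresponding function would give a pure Neumann sector eigenfunction with exponent $\mu_1<1$, contradicting \eqref{eq:les-mu}.
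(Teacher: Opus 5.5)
Your decomposition, the contour-rotation formula for the radial integral, and your bounds for $I_{F,1}$, $I_{F,2,c}$, $I_{F,4}$, $I_{F,5}$ and for the $\mathcal{K}[\g_{E,\mu_1}]-\g_{E,\mu_1}$ part of $I_{F,3}$ are essentially the paper's. The one place where you deviate does not work as written.

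You put the leading coefficient $a^{{\bf v},\lm_1}_{E,\mu_1}(\rho+z)^{\lm_1}$ into $I_{F,2,\infty}$. You put the remainder $R(z):=\g_{E,\mu_1}(z)-a^{{\bf v},\lm_1}_{E,\mu_1}(\rho+z)^{\lm_1}$ into $I_{F,3}$, on the grounds that it is of lower order in $\rho$. On $F_\pm$, however, the $R$-piece separates exactly like the principal term. It equals $\int_0^{h_z}R(z)\d(z)^{-\mu_1}dz$ times the same $r$--$\t$ factor as in $I_{F,2,\infty}$ (with $r$ restricted to $(0,h)$), and that factor is of size $\tau^{-\mu_1}$. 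Since $s-\frac12>1>\mu_1$, this piece cannot be bounded by $\rho^{-(s-\mu_1-1/2)}h_z\tau^{-(s-1/2)}$ as $\tau\to\infty$. Comparing powers of $\rho$ is not enough when the $\tau$-decay differs. So with your definition of $I_{F,3}$, the stated estimate for $I_{F,3}$ fails.

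The remainder is small only \emph{relative to the main term}, by a factor $o(1)$ as $\rho+h_z\to0$. It must therefore be absorbed into $I_{F,2,\infty}$. This is exactly what the paper does: it keeps the full coefficient $\g_{E,\mu_1}(z)$ in $v_1$ and uses \eqref{eq:asym-coef} only to bound $\int_0^{h_z}\g_{E,\mu_1}(z)\d(z)^{-\mu_1}dz$ from below.

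This also shows that you misplace the origin of $\e_*$. By \eqref{eq:delta-eqv}, $\int_0^{h_z}(\rho+z)^{\lm_1}\d(z)^{-\mu_1}dz\geq C_+^{-\mu_1}h_z\rho^{\lm_1-\mu_1}$ with no loss, so $\d$ plays no role. The loss is the price of having an \emph{a priori} constant in ``$\gtrsim$''. Both $|a^{{\bf v},\lm_1}_{E,\mu_1}|$ and the rate in the $o(\rho^{\lm_1})$ term depend on $D$ and $f$. The paper trades them for the power $\rho^{\e_*}$ via $\g_{E,\mu_1}\geq(\rho+z)^{\lm_1+\e_*}$ for $\rho+h_z<C_{\e_*}$, which pushes all $D$-dependence into the threshold \eqref{eq:constraint3}. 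Your lower bound carries the factor $|a^{{\bf v},\lm_1}_{E,\mu_1}|$ and needs this same step to reach the statement.

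On the angular factor you actually do more than the paper, which only asserts the bound with $\sin(a\mu_1)$. Your condition $\mu_1a\in(0,\pi)$ is the right one, but the operative fact is not that $e^{\pm i\mu_1a/2}$ has nonzero imaginary part. What matters is that $e^{i\mu_1a/2}$ and $e^{-i\mu_1a/2}$ are $\mathbb{R}$-linearly independent. For real $c_\pm$ this gives $|c_+e^{i\mu_1a/2}+c_-e^{-i\mu_1a/2}|\geq\max\{|c_+|,|c_-|\}\sin(\mu_1a)$, which is where the paper's factor comes from.

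Finally, the fact that $\vf'_{E,\mu_1}(\pm\frac a2)$ do not both vanish does not follow from \eqref{eq:les-mu}. Indeed, the exterior sector of opening $2\pi-a$ has the Neumann exponent $\pi/(2\pi-a)\in(\frac12,1)$. The correct argument is that vanishing fluxes on both sides would make $\vf_{E,\mu_1}$ a $C^1$, hence global, $2\pi$-periodic solution of $\vf''+\mu_1^2\vf=0$, forcing $\mu_1\in\mathbb{Z}$.
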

        \begin{proof}
            We split $u$ into singular and regular parts as stated in \zcref{th:SingDecom}. We mainly focus on the first edge singular function. More precisely, we decompose
            \begin{align*}
                u=u_{\bf v}+v_1+\widetilde{v_1}+u_{E,2}+u_{\rm reg},
            \end{align*}
            where 
            \begin{itemize}
                \item $u_{\bf v}$ is the vertex singular function associated with $\bf v$,
                \item $\ds v_1(\widetilde{r},\t,z):=\g_{E,\mu_1}(z)\widetilde{r}^{\mu_1}\vf_{E,\mu_1}(\t)$,
                \item $\ds \widetilde{v_1}=(\mathcal{K}[\g_{E,\mu_1}](\widetilde{r},z)-\g_{E,\mu_1}(z))\widetilde{r}^{\mu_1}\vf_{E,\mu_1}(\t)$,
                \item $u_{E,2}$ is the remaining edge singular functions with the leading singular exponent $\mu_2$,
                \item $u_{\rm reg}$ is the regular part.
            \end{itemize}
            We also extend the rays $\Gamma_\pm$ to infinity and denote the corresponding faces by $F_\pm^\infty$. Straightforward computations give
            \begin{align*}
                \int_{F_\pm} u_0\p_\nu u d\sigma = I_{F,1} + I_{F,2,\infty}-I_{F,2,c} + I_{F,3} + I_{F,4} + I_{F,5}.
            \end{align*}
            where
            \begin{align*}
                &I_{F,1} := \int_{F_{\pm}}u_0\p_\nu u_{\bf v} d\sigma, & & I_{F,2,\infty} := \int_{F_{\pm}^\infty}u_0\p_\nu v_1 d\sigma,\\
                &I_{F,2,c} := \int_{F_{\pm}^\infty\setminus F_{\pm}}u_0\p_\nu v_1 d\sigma,& & I_{F,3} := \int_{F_{\pm}}u_0\p_\nu \widetilde{v_1} d\sigma,\\
                &I_{F,4} := \int_{F_{\pm}}u_0\p_\nu u_{E,2} d\sigma, & & I_{F,5} := \int_{F_{\pm}}u_0\p_\nu u_{\rm reg} d\sigma.
            \end{align*}
            
            We first focus on the integral $I_{F,2,\infty}$. Since the variables are separated, we have
            \begin{align*}
                &\int_{F^\infty_\pm}u_0\p_\nu v_1 d\sigma = \int_0^{h_z} \int_0^{+\infty} u_0(r,\pm \frac{a}{2},z)\p_\nu v_1(r,\pm \frac{a}{2},z) dr dz,\\
                &= \int_0^{h_z} \g_{E,\mu_1}(z) \d(z)^{-\mu_1}dz \left( \vf'_{E,\mu_1}(\frac{a}{2}) \int_0^{+\infty}  r^{\mu_1-1} e^{z^+_a r} dr- \vf'_{E,\mu_1}(-\frac{a}{2}) \int_0^{+\infty}  r^{\mu_1-1} e^{z^-_a r} dr \right).
            \end{align*}
            By construction of the CGO solution, $u_0(r,\pm \frac{a}{2},z) = e^{z^\pm_a r}$ with $z^\pm_a=-\tau e^{\mp i\frac{a}{2}}$. To evaluate the integral in $r$, we use a complex-analysis argument from Proposition 4.4 in \cite{moi5}. Thus,
            \begin{align*}
                \int_0^{+\infty}  r^{\mu_1-1} e^{z^{\pm}_a r} dr =\Gamma(\mu_1)(\frac{-1}{z^\pm_a})^{\mu_1};
            \end{align*}
            Here $\Gamma$ denotes Euler's Gamma function. Hence,
            \begin{align*}
                \left| \vf'_{E,\mu_1}(\frac{a}{2}) \int_0^{+\infty}  r^{\mu_1-1} e^{z^+_a r} dr- \vf'_{E,\mu_1}(-\frac{a}{2}) \int_0^{+\infty}  r^{\mu_1-1} e^{z^-_a r} dr \right| \geq \sin(a\mu_1)\Gamma(\mu_1)\tau^{-\mu_1}.
            \end{align*}
            We now estimate the integral with respect to $z$, $\ds \int_0^{h_z} \g_{E,\mu_1}(z) \d(z)^{-\mu_1}dz$. We use the assumptions 3.) and 4.) in \zcref{assumption:singularity} and the asymptotic behavior \eqref{eq:asym-coef} to derive that $\ds \g_{E,\mu_1}(\rho) \sim a_{E,\mu_1}^{{\bf v},\lm_k} \rho^{\lm_k}$ as $\rho \to 0$. This asymptotic behavior implies that $\g_{E,\mu_1}(\rho)$ keeps the same sign as $\rho \rightarrow 0$, and we will assume that it is positive without loss of generality. By choosing $\e_*>0$, we have,
            \begin{align*}
                \frac{\g_{E,\mu_1}(\rho)}{\rho^{\lm_1+\e_*}} \tendvers{\rho}{0} +\infty.
            \end{align*}
            Consequently, there exists a constant $C_{\e_*}>0$ such that 
            \begin{align*}
                \forall 0<\rho<C_{\e_*},\quad \g_{E,\mu_1}(\rho) \geq  \rho^{\lm_1+\e_*}.
            \end{align*} 
            We remark that $C_{\e_*}$ is a local constant, depending on the inclusion $D$, the edge $E$, the vertex $\bf v$, and the parameter $\e_*$.

            We also remark that the function $\d$ is equivalent to the distance between the point under consideration and the vertex $\bf{v}$. Then there exist constants $C_\pm>0$, which depends only on the geometry and thus considered as an {\it a priori} parameter, such that for all $z\in (0,h_z)$,
            \begin{align}\label{eq:delta-eqv}
                C_- (\rho+z) \leq &\d(z)\leq C_+ (\rho+z).
            \end{align}
            We assume hereafter that 
            \begin{align}\label{eq:constraint3}
                \rho+h_z<C_{\e_*},
            \end{align}     
            Hence,   
            \begin{align*}
                \left|\int_0^{h_z} \g_{E,\mu_1}(z) \d(z)^{-\mu_1} dz \right| \geq \frac{C_+^{-\mu_1}}{\lm_1+\e_*-\mu_1+1} \left((\rho+h_z)^{\lm_1+\e_*-\mu_1+1}-\rho^{\lm_1+\e_*-\mu_1+1}\right). 
            \end{align*}           
            We may choose $\e_*$ smaller than a certain {\it a priori} constant, so that
            \begin{align}\label{eq:esti-F2-infty}
                |I_{F,2,\infty}|=\left| \int_{F_\pm^\infty} u_0 \p_\nu v_1 d\sigma \right| \gtrsim  h_z \rho^{\lm_1+\e_*-\mu_1} \tau^{-\mu_1}.
            \end{align}

            Next, we consider the integral $I_{F,2,c}$ on the complementary faces. For $s,x >0$, we introduce the incomplete Gamma function
            \begin{align*}
                \Gamma(s,x)=\int_x^{+\infty} t^{s-1} e^{-t} dt.
            \end{align*}
            Using the inequality $\ds e^{-t}\leq e^{-t/2}e^{-x/2}$ for $t\geq x$, we deduce that
            \begin{align*}
                \Gamma(s,x)\leq 2^s \Gamma(x) e^{-x/2}.
            \end{align*}
            Then, by a simple change of variables, we obtain
            \begin{align*}
                \left|\int_h^{+\infty} r^{\mu_1-1} e^{z_a^\pm r} dr \right|& \leq \int_h^{+\infty} r^{\mu_1-1} e^{-\tau \widetilde{a} r} dr\leq  (\tau \widetilde{a})^{-\mu_1} \Gamma(\mu_1, \tau \widetilde{a} h)\\
                &\leq (\tau \widetilde{a})^{-\mu_1} 2^{\mu_1} \Gamma(\mu_1) e^{-\frac{\tau \widetilde{a} h}{2}}.
            \end{align*}
            Applying the estimates of singular coefficients \eqref{eq:esti-edge-coef}, we deduce that
            \begin{align*}
                \left|\int_0^{h_z} \g_{E,\mu_1}\d(z)^{-\mu_1}dz \right|&\leq C_+^{-\mu_1}\|\g_{E,\mu_1}\|_{L^2(E)} \frac{(\rho+h_z)^{1-\mu_1}-\rho^{1-\mu_1}}{1-\mu_1}\\
                &\leq C_+^{-\mu_1}C\|u\|_{H^1(\Omega)}\frac{(\rho+h_z)^{1-\mu_1}}{1-\mu_1}.
            \end{align*}
            Consequently,
            \begin{align}\label{eq:esti-F2-c}
                |I_{F,2,c}|&\leq 2\mu_1 (\tau \widetilde{a})^{-\mu_1} 2^{\mu_1} \Gamma(\mu_1) e^{-\frac{\tau \widetilde{a} h}{2}} C_+^{-\mu_1}C\|u\|_{H^1(\Omega)}\frac{(\rho+h_z)^{1-\mu_1}}{1-\mu_1}\nonumber\\
                &\lesssim \tau^{-\mu_1}(\rho+h_z)^{1-\mu_1}e^{-\frac{\tau \widetilde{a} h}{2}}.
            \end{align}

            Applying \zcref{lemma:K-gamma} and the estimates of singular coefficients \eqref{eq:esti-edge-coef}, we obtain
            \begin{align}\label{eq:esti-F3}
                |I_{F,3}|&\leq \int_0^{h_z}\int_0^h |\mathcal{K}[\g_{E,\mu_1}](r,z)-\g_{E,\mu_1}(z)| r^{\mu_1-1} e^{-\tau \widetilde{a} r} dr dz \nonumber\\
                & \leq C'_{\phi,\mu_1} \|\g_{E,\mu_1}\|_{V_{-s}^{s-\mu_1}(E)}\rho^{-(s-\mu_1-1/2)} \int_0^{h_z}\int_0^h r^{s-\mu_1-1/2+\mu_1-1} e^{-\tau \widetilde{a} r} dr dz \nonumber\\
                &\lesssim h_z \rho^{-(s-\mu_1-1/2)} \int_0^{+\infty} r^{s-3/2} e^{-\tau\widetilde{a} r} dr \nonumber\\
                & \lesssim h_z \rho^{-(s-\mu_1-1/2)} (\tau\widetilde{a})^{-(s-1/2)}\Gamma(s-1/2)\lesssim h_z \rho^{-(s-\mu_1-1/2)} \tau^{-(s-1/2)}.
            \end{align}

            The estimates of $I_{F,1}$, $I_{F,4}$, and $I_{F,5}$ are relatively straightforward. Consider a point $x=(r,\pm a/2, z)\in F_\pm$.
            \begin{itemize}
                \item It readily follows from the vertex singular functions and the estimates of singular coefficients that
            \begin{align*}
                |\n u_{\bf v}(x)| & \lesssim \rho^{\lm_1-1}.
            \end{align*}
            We remark that $\lm_1>1$.
            \item For the remaining edge singular functions, we apply \zcref{lemma:esti-grad-edge} and \eqref{eq:esti-edge-coef} to obtain
            \begin{align*}
                |\n u_{E,2}(x)| & \lesssim \frac{r^{\widetilde{\mu_2}-1}}{\rho^{\widetilde{\mu_2}}},
            \end{align*}
            where $\widetilde{\mu_2}=\mu_2$ if $s-\mu_2\geq 1/2$ and $\widetilde{\mu_2}=s-1/2$ if $s-\mu_2< 1/2$.
            \item Combining the Sobolev embedding $H^{3/2}\hookrightarrow L^\infty$ in $\R^3$ with \eqref{eq:esti-reg-cinqdemi}, we deduce that
            \begin{align}\label{eq:esti-reg-infinity}
                \|\n u_{\rm reg}\|_{L^\infty(\Omega)} \leq C \|u_{\rm reg}\|_{H^{5/2}(\Omega)} \lesssim 1.
            \end{align}
            \end{itemize} 

            Substituting these estimates into the integrals, we obtain
            \begin{align}
                |I_{F,1}| &\lesssim \rho^{\lm_1-1} \int_0^{h_z}\int_0^h e^{-\tau\widetilde{a} r} dr dz \lesssim \rho^{\lm_1-1}h_z \frac{1-e^{-\tau\widetilde{a} h}}{\tau \widetilde{a}} \lesssim \rho^{\lm_1-1}h_z \tau^{-1} ,\label{eq:esti-F1}\\
                |I_{F,4}| &\lesssim \rho^{-\widetilde{\mu_2}} \int_0^{h_z}\int_0^h r^{\widetilde{\mu_2}-1} e^{-\tau\widetilde{a} r} dr dz\lesssim \rho^{-\widetilde{\mu_2}} h_z \int_0^{+\infty} r^{\widetilde{\mu_2}-1} e^{-\tau\widetilde{a} r} dr  \nonumber \\
                &\lesssim \rho^{-\widetilde{\mu_2}}h_z (\tau\widetilde{a})^{-\widetilde{\mu_2}}\Gamma(\widetilde{\mu_2})\lesssim \rho^{-\widetilde{\mu_2}}h_z \tau^{-\widetilde{\mu_2}},\label{eq:esti-F4}\\
                |I_{F,5}| &\lesssim h_z \frac{1-e^{-\tau\widetilde{a} h}}{\tau \widetilde{a}} \lesssim h_z \tau^{-1}.\label{eq:esti-F5}
            \end{align}

            The claim follows by collecting the estimates \eqref{eq:esti-F2-infty}, \eqref{eq:esti-F2-c}, \eqref{eq:esti-F3}, \eqref{eq:esti-F1}, \eqref{eq:esti-F4} and \eqref{eq:esti-F5}.
        \end{proof}

        \medskip

        \begin{proposition}\label{prop:upperbound}
            For $\tau>0$ large enough, the following estimate holds,
            \begin{align*}
                &\left|\int_{S^e \cup S^i\cup A_{\pm}} \left(u_0\p_\nu (u-u')-(u-u')\p_\nu u_0\right)d\sigma\right|\\
                &\lesssim h^{-1}\tau^{-2}+\rho^{-\mu_1}\tau^{-(\mu_1+1)}+h_z e^{-\tau \widetilde{b} h} (h^{\mu_1}\rho^{-\mu_1}+1+h\tau)\\
                &\quad +h h_z \left(\|\n(u-u')\|_{L^\infty(S^e)}+\tau\|u-u'\|_{L^\infty(S^e)}\right).
            \end{align*}
        \end{proposition}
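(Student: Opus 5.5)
We split the boundary integral into its three pieces, $S^e$, $S^i$ and the two caps $A_\pm$, and bound each one with the CGO bounds for $u_0$, the singular decomposition of \zcref{th:SingDecom} for $u$ and $u'$, and, on $S^e$ only, the smallness of $u-u'$ from \zcref{prop:propa-small}. Since $\widetilde{\O}$ stays at distance at least $h$ from $D'$, $u'$ is harmonic and smooth near $\widetilde{\O}$, with $|u'|+|\n u'|\lesssim h^{-1}$ there. The singular parts of $u$ are controlled by the expansions, because the constraints \eqref{eq:constraint1}--\eqref{eq:constraint2} ensure that only $u_{\bf v}$, the edge functions of $E$ and $u_{\rm reg}$ are present.

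\emph{The piece $S^e$.} Its cross-section $\p S^e$ has length $\lesssim h$, so $|S^e|\lesssim hh_z$. On $\widetilde{\O}$ we have $|u_0|\le e$ and $|\n u_0|\le e\tau$. This gives the contribution $hh_z\left(\|\n(u-u')\|_{L^\infty(S^e)}+\tau\|u-u'\|_{L^\infty(S^e)}\right)$ directly, and we keep it in this form to be fed later into \zcref{prop:propa-small}.

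\emph{The piece $S^i$.} On the arc $r=h$ we have $\cos\t\ge\cos(a/2+\arcsin(1/(\tau h)))\ge \widetilde{b}>0$ for $\tau h$ large, where we take $\widetilde{b}$ slightly smaller than $\widetilde{a}$. Hence $|u_0|\le e^{-\tau\widetilde{b}h}$ and $|\n u_0|\le\tau e^{-\tau\widetilde{b}h}$. We then bound $|u-u'|\lesssim 1$ (by Hölder continuity and the bounds on the singular parts) and $|\n u|\lesssim (h/\rho)^{\mu_1-1}\rho^{-1}+1$, using $\tilde r=r/\d(z)\sim h/\rho$ in the first edge singularity together with \eqref{eq:esti-edge-coef}. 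The area of $S^i$ is $\lesssim hh_z$, so multiplying out yields the term $h_z e^{-\tau\widetilde{b}h}(h^{\mu_1}\rho^{-\mu_1}+1+h\tau)$. We absorb $|\n u'|\lesssim h^{-1}$ into the same term, possibly after adjusting the constants, since the overall factor $h$ cancels it.

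\emph{The caps $A_\pm$.} These are the delicate terms, because they reach the edge, where $u_0$ is not small. We split each cap into $A^i_\pm$, inside $D$, and $A^e_\pm$, outside $D$.

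On $A^i_\pm$ we use $|u_0|\le e^{-\tau\widetilde{a}r}$. The leading behaviour is $|\n u|\lesssim \rho^{-\mu_1}r^{\mu_1-1}$, and integrating $\int_0^h r^{\mu_1-1}e^{-\tau\widetilde{a}r}\,r\,dr$ gives $\rho^{-\mu_1}\tau^{-(\mu_1+1)}$. The same computation with $|u-u'|\lesssim 1$ and $|\n u_0|\lesssim \tau e^{-\tau\widetilde{a} r}$ gives $\tau\cdot\tau^{-2}$. To reach $\tau^{-2}$ we refine this using $|u(x)-u(O)|$ small, or we subtract the constant, which does not change the identity. The contributions of $u_{\rm reg}$ and $u'$ give $h^{-1}\tau^{-2}$.

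On $A^e_\pm$ the weight $u_0$ is only bounded, but the region has width $1/\tau$, so its area near the faces is $\lesssim h/\tau$. We pair this width with the decay of $u_0$ along the faces to obtain the same powers.

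\emph{Main obstacle.} I expect the caps to be the main difficulty, in particular extracting the sharp powers $\tau^{-(\mu_1+1)}$ and $h^{-1}\tau^{-2}$ near the edge, where $u_0$ is of order one and $\n u$ blows up. I would first try the polar integration in the cross-section $\Pi$ with the explicit bound $e^{-\tau r\cos\t}$. In the exterior strip, where $\cos\t$ may be negative, I would use that the distance to $\Gamma_\pm$ is at most $1/\tau$.
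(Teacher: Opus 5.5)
There is a genuine gap in the cap term $\int_{A_\pm}(u-u')\,\p_\nu u_0\,d\sigma$. Your treatment of $S^e$ and $S^i$ agrees with the paper's, up to the choice of $\widetilde{b}$. So, in substance, does your treatment of $\int_{A_\pm}u_0\,\p_\nu(u-u')\,d\sigma$. For the other cap term, note that $|\n u_0|=\sqrt{2}\,\tau|u_0|$ and $\int_{A_\pm}|u_0|\,d\sigma\sim\tau^{-2}$. Your bound $|u-u'|\lesssim 1$ therefore gives only $O(\tau^{-1})$, and this is not dominated by the right-hand side for large $\tau$. Indeed, $\tau^{-1}/(h^{-1}\tau^{-2})=\tau h$ and $\tau^{-1}/(\rho^{-\mu_1}\tau^{-(\mu_1+1)})=(\tau\rho)^{\mu_1}$ both blow up, and the remaining terms are exponentially small or involve $u-u'$ on $S^e$.

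The repair you suggest does not work. Subtracting a constant $c$ does leave the full boundary integral unchanged, because $\int_{\p\widetilde{\O}}\p_\nu u_0\,d\sigma=0$. However, the $S^e$ contribution then carries $\tau\|u-u'-c\|_{L^\infty(S^e)}$ instead of $\tau\|u-u'\|_{L^\infty(S^e)}$, so the stated form is lost. Moreover, one constant cannot make $u-u'-c$ small on both caps at once. They lie at $z=0$ and $z=h_z$, and the values of $u-u'$ on the edge at these two heights need not be close.

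The missing observation is elementary. The vector $\zeta=\tau(-\hat{x}+i\hat{y})$ lies in the plane $\Pi$, which is perpendicular to $E$. Hence $\n u_0=\zeta u_0$ has no component along the edge, and $\p_z u_0\equiv 0$ in $\widetilde{\O}$. The normal to $A_\pm$ is parallel to the $z$-axis, so $\p_\nu u_0\equiv0$ on $A_\pm$ and the troublesome term vanishes identically; this is exactly how the paper disposes of it.

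With that fixed, your remaining cap estimates give the claimed $h^{-1}\tau^{-2}+\rho^{-\mu_1}\tau^{-(\mu_1+1)}$. On $A^i_\pm$ this comes from the decay $e^{-\tau\widetilde{a}r}$. On $A^e_\pm$ it comes from your pairing of the strip width $1/\tau$ with the bound $|u_0|\le e\,e^{-\tau\widetilde{a}t}$, where $t$ is the coordinate along $\Gamma_\pm$, together with the disk of radius $1/\tau$ about $O$. The paper instead removes a disk of radius $L/\tau$ about $O$ and uses $|u_0|\le e^{-\tau\widetilde{\b}r}$ outside it, with $\widetilde{\b}=\cos(\frac{\pi+a}{4})$; either approach works.

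A stray $\tau^{-1}$ term would actually be harmless for the main stability theorem, whose final inequality already contains $h^{-1}\tau^{-1}$. It does not, however, prove the proposition as stated.
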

        \begin{proof}
            First, we estimate the integrals on the upper and lower faces $A_\pm$. The normal vectors on these faces are parallel to the $z$-axis. Therefore, $\p_\nu u_0=\p_z u_0=0$ on these regions, and hence
            \begin{align*}
                \int_{A_\pm} (u-u')\p_\nu u_0 d\sigma =0.
            \end{align*}
            We now estimate $\p_z u$ and $\p_z u'$ on $A_\pm$. Since the region $\widetilde{\O}$ is at distance at least $h$ from the polyhedron $D'$, interior elliptic regularity implies that $u'$ is analytic there. Applying the gradient estimate (e.g. Theorem 3.9 in \cite{Gilbarg-Trudinger}), we have
            \begin{align}\label{eq:esti-grad-uprime}
                \|\n u'\|_{L^\infty(\widetilde{\O})} \leq C h^{-1} \|u'\|_{L^\infty(\O)}\lesssim h^{-1}.
            \end{align}
            For $\n u$, we must account for singularities. We decompose $u$ into singular and regular parts by \zcref{th:SingDecom}, then estimate each term.
            \begin{align*}
                u=u_{\bf v}+u_{E} +u_{\rm reg}.
            \end{align*}

            Since singular exponents are ordered increasingly, it is sufficient to consider the strongest singularity. For vertex singular functions, it is straightforward that $\ds |\p_z u_{\bf v}(x)| \leq C \rho^{\lm_1-1}$. For edge singular functions, combining \zcref{lemma:esti-grad-edge} with \eqref{eq:esti-edge-coef}, we have
            \begin{align*}
                |\p_z u_{E}(x)| \leq C \frac{\widetilde{r}^{\mu_1-1}}{\d(z)}\lesssim \rho^{-\mu_1}r^{\mu_1-1}.
            \end{align*}
            For the regular part, the estimate comes directly from \eqref{eq:esti-reg-infinity}. Combining those estimates, we have,
            \begin{align*}
                \|\p_z (u-u')\|_{L^\infty(\widetilde{\O})} \lesssim (h^{-1} +\rho^{-\mu_1}r^{\mu_1-1}).
            \end{align*}
            
            Calculating the integral over $A_\pm^i$,
            \begin{align*}
                \int_{A_\pm^i}|u_0(x)| r^{\mu_1-1}d\sigma &= \int_{-a/2}^{a/2}\int_0^h e^{-\tau r\cos \t} r^{\mu_1} dr d\t \leq  \int_0^h\int_{-a/2}^{a/2} e^{-\tau \widetilde{a} r} r^{\mu_1} dr d\t \\
                &\leq a \int_0^{+\infty} e^{-\tau \widetilde{a} r} r^{\mu_1} dr \leq C \tau ^{-(\mu_1+1)}\Gamma(\mu_1+1).
            \end{align*}
            So, 
            \begin{align}\label{eq:bound-Ai}
                \left|\int_{A_\pm^i} u_0 \p_\nu(u-u')d\sigma \right|\lesssim (h^{-1}\tau^{-2}+\rho^{-\mu_1}\tau^{-(\mu_1+1)}).
            \end{align}

            To estimate the integrals on $A_\pm^e$, we split $A_\pm^e$ into two parts. Denote by $B_{\tau}$ the two-dimensional disk on $\Pi$, centered at $O$ with radius $L\tau^{-1}$. Here, $L>1$ is chosen such that $\arcsin(1/L)<(\pi-a)/4$. With this construction, we deduce that
            \begin{align*}
                (r,\t)\in A_-^e\setminus B_{\tau} \Rightarrow L/\tau <r< h \;\; {\rm and}\;\; \t\in (a/2,(\pi+a)/4)\cup (-(\pi+a)/4,-a/2).
            \end{align*}
            Introducing $\widetilde{\b}:=\cos(\frac{\pi+a}{4})>0$, we have $u_0(r,\t,z)\leq e^{-\tau \widetilde{\b} r}$ on $A_-^e\setminus B_{\tau}$. We now compute
            \begin{align*}
                \int_{A_-^e\cap B_{\tau}} |u_0(x)|r^{\mu_1-1} d\sigma &\leq e\int_{B_{\tau}} r^{\mu_1-1} d\sigma= 2\pi e \int_0^{L\tau^{-1}} r^{\mu_1} dr = \frac{2\pi e}{\mu_1+1} (L\tau^{-1})^{\mu_1+1}\\
                &\leq C\tau^{-(\mu_1+1)},\\
                \int_{A_-^e\setminus B_{\tau}} |u_0(x)|r^{\mu_1-1} d\sigma &\leq 2\int_{a/2}^{(\pi+a)/4} \int_{L\tau^{-1}}^h r^{\mu_1} e^{-\tau \widetilde{\b} r} dr d\t \leq C \int_{0}^{+\infty} r^{\mu_1} e^{-\tau \widetilde{\b} r} dr \\
                &\leq C \tau^{-(\mu_1+1)}\Gamma(\mu_1+1).
            \end{align*}
            Hence, it follows that
            \begin{align}\label{eq:bound-Ae}
                \left|\int_{A_\pm^e} u_0 \p_\nu (u-u')d\sigma\right|\lesssim h^{-1}\tau^{-2}+\rho^{-\mu_1}\tau^{-(\mu_1+1)}.
            \end{align}


            Next, we estimate the integrals on the inner lateral face $S^i$. One readily checks that $|u_0(x)|\leq e^{-\tau\widetilde{b}h}$ on $S^i$, with $\widetilde{b}=\cos((\pi+a)/4)>0$, once the following condition is satisfied:
            \begin{align}\label{eq:tau0}
                \tau\geq \tau_0=\frac{1}{h\sin((\pi-a)/4)}.
            \end{align}
            We assume this condition throughout the remainder of the proof. Then,
            \begin{align*}
                &\left|\int_{S^i} u_0 \p_\nu (u-u')-(u-u')\p_\nu u_0 d\sigma\right| \leq \left(\|\p_\nu(u-u')\|_{L^\infty(S^i)} + \tau\|u-u'\|_{L^\infty(S^i)}\right)|S^i| e^{-\tau \widetilde{b} h}.
            \end{align*}
            The quantity $\|u-u'\|_{L^\infty(S^i)}$ can be estimated by H\"older norms that depend only on {\it a priori} data. For $\|\p_\nu(u-u')\|_{L^\infty(S^i)}$, we apply \zcref{lemma:esti-grad-edge} to obtain $\|\p_\nu u\|_{L^\infty(S^i)}\lesssim h^{\mu_1-1}\rho^{-\mu_1}$, and we use \eqref{eq:esti-grad-uprime} for $\n u'$. Consequently, we obtain the following estimate for the integrals on $S^i$:
            \begin{align}\label{eq:bound-Si}
                \left|\int_{S^i} u_0 \p_\nu (u-u')-(u-u')\p_\nu u_0 d\sigma\right|\lesssim hh_z e^{-\tau \widetilde{b} h} (h^{\mu_1-1}\rho^{-\mu_1}+h^{-1}+\tau).
            \end{align}

            Finally, we estimate the integrals on the lateral face $S^e$. Since we use propagation of smallness to estimate $u-u'$ and $\n(u-u')$ on $S^e$, the following bounds are obtained by straightforward calculations:
            \begin{align}\label{eq:bound-Se}
                &\left|\int_{S^e} u_0 \p_\nu (u-u')- (u-u')\p_\nu u_0 d\sigma\right| \nonumber \\
                 &\leq |S^e|\left(\|u_0\|_{L^\infty(S^e)}\|\n (u-u')\|_{L^\infty(S^e)} +\|\n u_0\|_{L^\infty(S^e)}\|(u-u')\|_{L^\infty(S^e)}\right)  \nonumber \\
                &\lesssim h h_z \left(\|\n(u-u')\|_{L^\infty(S^e)}+\tau\|u-u'\|_{L^\infty(S^e)}\right).
            \end{align}
            The proof is complete by collecting the results \eqref{eq:bound-Ai}, \eqref{eq:bound-Ae}, \eqref{eq:bound-Si}, and \eqref{eq:bound-Se}.
        \end{proof}

        \section{Proof of the stability}\label{sec:proof}

        \begin{proof}[Proof of \zcref{th:main}]
            We apply the integral identity \eqref{eq:int-id} with the CGO solution $u_0$ constructed in \eqnref{eq:CGO}. The left-hand side of \eqref{eq:int-id} is estimated using \zcref{prop:lowerbound}, and the right-hand side is estimated using \zcref{prop:upperbound}. Collecting these estimates, we obtain
            \begin{align*}
                \rho^{\lm_1+\e_*-\mu_1} h_z \tau^{-\mu_1} &\lesssim \rho^{\lm_1-1}h_z\tau^{-1} + (\rho+h_z)^{1-\mu_1}\tau^{-\mu_1} e^{-\frac{\tau \widetilde{a} h}{2}}\\
                & \quad + \rho^{-(s-\mu_1-1/2)}h_z \tau^{-(s-1/2)} + \rho^{-\widetilde{\mu_2}}h_z\tau^{-\widetilde{\mu_2}} + h_z\tau^{-1}\\
                & \quad + h^{-1}\tau^{-2}+\rho^{-\mu_1}\tau^{-(\mu_1+1)}+h_z e^{-\tau \widetilde{b} h} (h^{\mu_1}\rho^{-\mu_1}+1+h\tau)\\
                &\quad +h h_z \left(\|\n(u-u')\|_{L^\infty(S^e)}+\tau\|u-u'\|_{L^\infty(S^e)}\right).
            \end{align*}
            To estimate $\|u-u'\|_{L^\infty(S^e)}$ and $\|\n(u-u')\|_{L^\infty(S^e)}$, we apply propagation of smallness from \zcref{prop:propa-small}. Introduce the notation
            \begin{align*}
                \zeta(\e):=(\ln |\ln \e|)^{-\a}.
            \end{align*}
            Noting that the common H\"older norm $T$ depends only on {\it a priori} data and that the surface $S^e$ has a distance $1/\tau$ to $D$, we have,
            \begin{align*}
                \|u-u'\|_{L^\infty(S^e)} &\lesssim \zeta(\e),\\
                \|\n(u-u')\|_{L^\infty(S^e)} &\lesssim \tau \zeta(\e).
            \end{align*}
            Recall that $\rho,h$, and $h_z$ must satisfy the constraints \eqref{eq:constraint1},\eqref{eq:constraint2} and \eqref{eq:constraint3}, also that $D'$ is at distance at least $h$ from $\widetilde{\O}$. Thus, we choose
            \begin{align*}
                h_z=h,\quad \rho=\frac{h}{\ell},\quad h=\min\left\lbrace C_{\e_*}\left(1+\frac{1}{\ell}\right)^{-1} ,\frac{\ell^2}{2\ell+1},\;\frac{1}{2}d_H(D,D')\right\rbrace.
            \end{align*}
            Then the estimate becomes
            \begin{align*}
                h^{\lm_1+\e_*-\mu_1+1} \tau^{-\mu_1} &\lesssim h^{\lm_1}\tau^{-1} + h^{1-\mu_1}\tau^{-\mu_1} e^{-\frac{\tau \widetilde{a} h}{2}} + h^{3/2-s+\mu_1} \tau^{-(s-1/2)} + h^{1-\widetilde{\mu_2}}\tau^{-\widetilde{\mu_2}} + h\tau^{-1}\\
                & \quad + h^{-1}\tau^{-2}+h^{-\mu_1}\tau^{-\mu_1-1}+h e^{-\tau \widetilde{b} h} (1+h\tau) +h^2 \tau \zeta(\e).
            \end{align*}
            Next, we use the usual inequalities $e^{-x}\leq x^{-1}$ and $e^{-x}\leq x^{-2}$ for all $x>0$ to deduce that
            \begin{align*}
                h^{\lm_1+\e_*-\mu_1+1} \tau^{-\mu_1} &\lesssim h^{\lm_1}\tau^{-1} + h^{-\mu_1}\tau^{-\mu_1-1} + h^{3/2-s+\mu_1} \tau^{-(s-1/2)} + h^{1-\widetilde{\mu_2}}\tau^{-\widetilde{\mu_2}} + h\tau^{-1}\\
                & \quad + h^{-1}\tau^{-2}+h^{-\mu_1}\tau^{-\mu_1-1}+\tau^{-1} +h^2 \tau \zeta(\e).
            \end{align*}
            Recall the estimates on singular exponents: $0<\lm_1<s-1/2$, $0<\mu_1<1$, and $3/2\leq s <\mu_1+1$. Combining these with $\tau\geq 1$ and $h\leq 1$, we deduce that
            \begin{align*}
                h^{\lm_1+\e_*-\mu_1+1} \tau^{-\mu_1} &\lesssim h^{-1}\tau^{-1}+  h^{1-\widetilde{\mu_2}}\tau^{-\widetilde{\mu_2}}+h^2 \tau \zeta(\e).
            \end{align*}
            Recalling that $s=3/2+\epsilon$ with $0<\epsilon<\mu_1-1/2$ and using the definition of $\widetilde{\mu_2}$, we distinguish the following two cases.
            \begin{itemize}
                \item If $s-\mu_2\geq 1/2$, then $-\widetilde{\mu_2}=-\mu_2$ and $1-\widetilde{\mu_2}=1-\mu_2\geq 3/2-s=\epsilon>-1/2$.
                \item If $s-\mu_2< 1/2$, then $-\widetilde{\mu_2}=1/2-s=-1-\epsilon <-1$ and $1-\widetilde{\mu_2}=-\epsilon >-1/2$.
            \end{itemize}
            Taking into account that $\mu_2>1$, the estimate becomes
            \begin{align*}
                h^{\lm_1+\e_*-\mu_1+1} \tau^{-\mu_1} &\lesssim h^{-1}\tau^{-1}+h^2 \tau \zeta(\e).
            \end{align*}
            By minimizing the right-hand side with respect to $\tau$, we choose
            \begin{align}
                \tau=\tau_e:=\left(h^3\zeta(\e)\right)^{-\frac{1}{2}}.
            \end{align}
            For $\e$ sufficiently small, one can verify that $\tau_e\geq \tau_0$, where $\tau_0$ is given in \eqref{eq:tau0}.

            Substituting this choice into the estimate and solving for $h$, we obtain
            \begin{align*}
                h \lesssim \zeta(\e)^{\frac{1}{2(\lm_1+\e_*)+\mu_1+1}}\leq \zeta(\e)^{\frac{1}{3\lm_1+\mu_1+1}},
            \end{align*}
            where the last inequality holds by choosing $\e_*<\lm_1/2$.

            Finally, as $\ds \zeta(\e)\tendvers{\e}{0} 0$, we can deduce that for $\e$ small enough, the right-hand side above is less than both $C_{\e_*}\left(1+\frac{1}{\ell}\right)^{-1}$ and $\frac{\ell^2}{2\ell+1}$. Hence, the stability estimate \eqref{eq:stable} follows with the exponent $\kappa$ given by
            \begin{align*}
               \kappa:= \frac{-\a }{3\lm_1+\mu_1+1}.
            \end{align*}
            The exponent $\kappa$ is bounded below by a constant depending only on {\it a priori} data, using $\a=\min\{\a_0,\epsilon,\lm_1,\mu_1\}$ from \zcref{prop:propa-small} and the bounds $\mu_1\in (\frac{1}{2},1)$, $\lm_1\in (1,\sqrt{2k})$.

            The proof is complete.
        \end{proof}

        \medskip
  
        \begin{remark}
            The assumption 4.) in \zcref{assumption:singularity} can be relaxed to certain order $k\in\N$ such that the interaction coefficient $a_{E,\nu}^{{\bf v},\lm_k}$ is non-zero from this rank. In that setting, it is sufficient to replace $\lm_1$ by $\lm_k$ accordingly in the proofs in this paper. The stability remains valid with a weaker exponent $\kappa$ in \eqref{eq:stable}. 
        \end{remark}

        \medskip

        \begin{remark}
            By letting $\e\rightarrow 0$, we can deduce the uniqueness result for the inverse conductivity problem for convex polyhedral inclusions. As a significant improvement over the previous result \cite{Barcelo1994}, the assumption on the boundary data can be relaxed to a generic condition as described here.
        \end{remark}        
    
        \appendix

        \section{Edge singularity and smoothing operator}\label{explain:edge}
        Edge singular functions are constructed with an exponent $\mu$ and a coefficient $\g$ in the following form (without $\log$ terms, for simplicity):
        \begin{align}\label{eq:edge-singular}
            u_{E,\mu}(r,\t,z)= \mathcal{K}[\g](\widetilde{r},z)\widetilde{r}^\mu\vf(\t).
        \end{align}
        In full generality, $\mu\in\C$ comes from the spectrum of the operator pencil associated with the edge $E$. In our case, $\mu$ are roots of \eqref{eq:edge-expo} and can be ordered increasingly as $0<\mu_1<\mu_2<\cdots$. We also have $\frac{1}{2}<\mu_1<1$.

        Essentially, edge singularities behave as vertex singularities on cross-sections perpendicular to the edge, except that singular coefficients $\g(z)$ are functions along the edge rather than constants. They are obtained by inverting the operator pencil and applying an inverse Mellin transform. Since this approach does not directly provide the required regularity, we introduce weighted Sobolev spaces $V_{\eta}^m(E)$ and the smoothing operator $\mathcal{K}$.

        First, we introduce a stretching change of variables. Let $\d:\overline{E}\rightarrow \R$ be a positive smooth function equivalent to the distance to the endpoints of $E$. For instance, we can choose $\d(z)=1-z^2$ if $E=(-1,1)$. Let $z_0$ be an interior point of $E$; we define the stretching variable $\widetilde{z}$ by
        \begin{align*} 
            \widetilde{z}:=\int_{z_0}^z \frac{1}{\d(s)}ds.
        \end{align*}
        For a function $f$ defined on $E$, define the stretched function $\widetilde{f}$ by $\widetilde{f}(\widetilde{z}):=f(z)$. As an immediate consequence of this change of variables, we have
        \begin{align*}
            &\p_{\widetilde{z}} \widetilde{f}(\widetilde{z})=\d(z)\p_z f(z), &      &\int_\R \widetilde{f}(\widetilde{z})d\widetilde{z}=\int_E \d(z)^{-1}f(z) dz.
        \end{align*}
        Then, for $p\in [1,+\infty)$ and $k\in\N$, we deduce that $\p^k_{\widetilde{z}}\widetilde{f}\in L^p(\R)$ if and only if $\d^{k-1/p}\p_z^k f\in L^p(E)$. This motivates the following weighted Sobolev spaces.
        
        Let $m\in\N$ and $\eta\in\R$,
        \begin{align*}
            V_{\eta}^m(E):=\setdef{f\in L^2(E)}{\d^{\eta+k}\p_z^k f \in L^2(E), \; \forall k\leq m}.
        \end{align*}
        If $m$ is non-integer, $V_{\eta}^m(E)$ is defined by interpolation. From the previous result, $V^m_{-1/2}(E)$ is isomorphic to $H^m(\R)$ under the stretching change of variables. Since $\d$ is positive and smooth on $E$, the spaces $V_{\eta}^m(E)$ are increasing, i.e., there are continuous embeddings $V_{\eta_1}^m(E)\hookrightarrow V_{\eta_2}^m(E)$ for $\eta_1\leq \eta_2$. In our case, the edge-singularity coefficient satisfies $\g\in V_{-s}^{s-\mu}(E)$, where $s_0\leq s <s_0+1$ and $s_0$ is given by \eqref{eq:def-s0}. With our choice of $s_0$, we always have $\frac{3}{2}\leq s < \mu_1+1$. Therefore,
        \begin{align}\label{eq:embedding-weighted}
                V_{-s}^{s-\mu}(E)\hookrightarrow V_{-1/2}^{s-\mu}(E) \cong H^{s-\mu}(\R).
        \end{align}
         It is worth noting the role of $\mu$. Since $s\geq 3/2$, we have $s-\mu> 1/2$ for all $\mu < 1$, in particular for $\mu=\mu_1$. Using the one-dimensional Sobolev embedding $H^{s-\mu}\hookrightarrow L^\infty$, we deduce that $\widetilde{\g}\in L^\infty(\R)\Rightarrow\g\in L^\infty(E)$ in this case. For further details on these Kondratiev-type weighted Sobolev spaces, we refer to \cite{Mazya2010}.

        Next, we introduce the smoothing operator $\mathcal{K}$ by the following convolution product,
        \begin{align*}
            \forall \widetilde{r}=\frac{r}{\d(z)}>0,\; z\in E,\quad \mathcal{K}[\g](\widetilde{r},z):=\int_{\R} \frac{1}{\widetilde{r}}\phi(\frac{t}{\widetilde{r}})\widetilde{\g}(\widetilde{z}-t)dt.
        \end{align*}
        Here, the kernel $\phi\in\Sw(\R)$ is smooth, rapidly decreasing and satisfies $\ds \int_\R \phi(t) dt=1$.

        \medskip

        \begin{lemma}\label{lemma:esti-grad-edge}
            Let $\frac{1}{2}<\mu<s$ and let $u_{E,\mu}$ be given by \eqref{eq:edge-singular} with coefficient $\g\in V_{-s}^{s-\mu}(E)$. Then, in the region $\widetilde{\O}$, there exists a constant $C_{\phi,\mu}>0$ such that
            \begin{align}\label{eq:esti-grad-edge}
                |\n u_{E,\mu}(r,\t,z)| \leq C_{\phi,\mu} \|\g\|_{V_{-s}^{s-\mu}(E)}\frac{r^{\widetilde{\mu}-1}}{\rho^{\widetilde{\mu}}}.
            \end{align}
            Here, $\rho$ denotes the distance from the point $(r,\t,z)$ to its closest endpoint of $E$. Also, $\widetilde{\mu}=\mu$ if $s-\mu\geq 1/2$ and $\widetilde{\mu}=s-1/2$ if $s-\mu<1/2$.
        \end{lemma}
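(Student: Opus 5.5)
The plan is to differentiate the product $\mathcal{K}[\g](\widetilde r,z)\,\widetilde r^{\mu}\vf(\t)$ directly in cylindrical coordinates and to bound each resulting term by working in the stretched variable $\widetilde z$. There the problem looks like a standard Littlewood--Paley/mollifier estimate for $\widetilde\g\in H^{s-\mu}(\R)$, which is available through the embedding \eqref{eq:embedding-weighted}.

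Since $\widetilde r=r/\d(z)$, the chain rule gives
\begin{align*}
\p_r=\d^{-1}\p_{\widetilde r},\qquad r^{-1}\p_\t,\qquad \p_z\big|_{r}=\d^{-1}\p_{\widetilde z}-\frac{\d'(z)\,\widetilde r}{\d(z)}\p_{\widetilde r}.
\end{align*}
Applied to $u_{E,\mu}$, this produces terms of three types:
\begin{enumerate}[(i)]
\item $\mathcal{K}[\g]\,\widetilde r^{\mu-1}\d^{-1}$, which arises from $\p_r$ or $r^{-1}\p_\t$ hitting $\widetilde r^\mu\vf(\t)$, using $r^{-1}=\widetilde r^{-1}\d^{-1}$;
\item $\widetilde r\,\p_{\widetilde r}\mathcal{K}[\g]\cdot\widetilde r^{\mu-1}\d^{-1}$;
\item $\p_{\widetilde z}\mathcal{K}[\g]\cdot\widetilde r^{\mu}\d^{-1}$.
\end{enumerate}
The factor $\d'$ is bounded. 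Since $\d\simeq\rho$ by \eqref{eq:delta-eqv}, the target bound $r^{\widetilde\mu-1}\rho^{-\widetilde\mu}$ equals $\widetilde r^{\widetilde\mu-1}\d^{-1}$ up to constants, and everything reduces to one-dimensional bounds on the mollified coefficient.

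The first step is to bound $\mathcal{K}[\g]$ and the scale derivative $\widetilde r\p_{\widetilde r}\mathcal{K}[\g]$ uniformly in $\widetilde r$. The operator $\mathcal K[\g](\widetilde r,\cdot)=\phi_{\widetilde r}*\widetilde\g$ satisfies $|\phi_{\widetilde r}*\widetilde\g|\le\|\phi\|_{L^1}\|\widetilde\g\|_{L^\infty}$. Also, $\widetilde r\p_{\widetilde r}\phi_{\widetilde r}=\psi_{\widetilde r}$ with $\psi(t)=-(t\phi(t))'\in\Sw$, so this term is bounded the same way. When $s-\mu>1/2$ the one-dimensional Sobolev embedding gives $\|\widetilde\g\|_{L^\infty}\lesssim\|\g\|_{V^{s-\mu}_{-s}}$, and terms (i) and (ii) are bounded by $\widetilde r^{\mu-1}\d^{-1}$, which is the claim with $\widetilde\mu=\mu$. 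On the bounded region $\widetilde\O$ one has $\widetilde r\lesssim 1$, so any larger power of $\widetilde r$ can be traded down to this one.

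The second step handles term (iii), which I expect to be the main obstacle. Differentiating the convolution gives $\p_{\widetilde z}(\phi_{\widetilde r}*\widetilde\g)=\widetilde r^{-1}(\phi')_{\widetilde r}*\widetilde\g$, and $\int\phi'=0$. In Fourier variables this equals $\widetilde r^{-1}\int\widehat{\phi'}(\widetilde r\xi)\widehat{\widetilde\g}(\xi)e^{i\widetilde z\xi}d\xi$. Cauchy--Schwarz against the weight $\langle\xi\rangle^{s-\mu}$ then gives
\begin{align*}
|\p_{\widetilde z}\mathcal K[\g]|\lesssim \widetilde r^{-1}\|\widetilde\g\|_{H^{s-\mu}}\Big(\int|\widehat{\phi'}(\widetilde r\xi)|^2\langle\xi\rangle^{-2(s-\mu)}d\xi\Big)^{1/2}.
\end{align*}
For $\widetilde r\le1$, rescale $\eta=\widetilde r\xi$ and use $|\widehat{\phi'}(\eta)|\lesssim\min(|\eta|,|\eta|^{-N})$.
\begin{itemize}
\item If $s-\mu<3/2$, the integral is $\lesssim \widetilde r^{2(s-\mu)-1}$, so $|\p_{\widetilde z}\mathcal K|\lesssim\widetilde r^{s-\mu-3/2}$ and term (iii) is $\lesssim\widetilde r^{s-3/2}\d^{-1}$.
\item If $s-\mu>3/2$, the integral is $\lesssim \widetilde r^{2}$, so $|\p_{\widetilde z}\mathcal K|\lesssim 1$ and term (iii) is $\lesssim\widetilde r^{\mu}\d^{-1}$.
\end{itemize}
In the first case $s-3/2\ge\mu-1$ exactly when $s-\mu\ge1/2$, so term (iii) is dominated by $\widetilde r^{\mu-1}\d^{-1}$. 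Otherwise the exponent is $\widetilde\mu-1$ with $\widetilde\mu=s-1/2$, which explains the dichotomy in the statement. The borderline cases $s-\mu=1/2$ and $s-\mu=3/2$ carry a logarithmic loss. I would absorb it by shrinking the exponent slightly, or accept a harmless $\log$ since $\widetilde r\lesssim1$. I must check that the statement's convention (equality put into the $\widetilde\mu=\mu$ case) still closes; if not, I would slightly adjust $s$ downward, as \zcref{assumption:singularity} permits.

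The same Fourier argument also covers case (i)/(ii) when $s-\mu<1/2$, where $\widetilde\g$ need not be bounded. There $|\psi_{\widetilde r}*\widetilde\g|\lesssim\widetilde r^{s-\mu-1/2}\|\widetilde\g\|_{H^{s-\mu}}$, which yields the factor $\widetilde r^{s-1/2-1}\d^{-1}$ again, consistent with $\widetilde\mu=s-1/2$. Finally, I would collect constants depending on $\phi$, $\mu$ and $\sup|\vf|,|\vf'|$ into $C_{\phi,\mu}$, and replace $\widetilde r$ and $\d$ by $r/\rho$ and $\rho$ using \eqref{eq:delta-eqv}.
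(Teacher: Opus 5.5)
Your proof is correct for $s-\mu\neq 1/2$. It shares the paper's skeleton: the same chain rule in $(\widetilde{r},\widetilde{z})$, the reduction to bounds on $\mathcal{K}[\g]$, $\widetilde{r}\,\p_{\widetilde{r}}\mathcal{K}[\g]$ and $\p_{\widetilde{z}}\mathcal{K}[\g]$ by powers of $\widetilde{r}$, and the final conversion using $\widetilde{r}\le 1$ and $\d\gtrsim\rho$.

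The difference is only in how the one-dimensional mollifier estimates are obtained.
\begin{itemize}
\item For $s-\mu\ge 1/2$, the paper stays on the physical side and applies Young's inequality with $\|\widetilde{\g}\|_{L^\infty}$ to all three quantities, giving $1,\ \widetilde{r}^{-1},\ \widetilde{r}^{-1}$. This already handles your term (iii), because it carries the factor $\widetilde{r}^{\mu}$. So (iii) is not really the main obstacle, and your sharper bound $\widetilde{r}^{s-\mu-3/2}$ there is not needed.
\item For $s-\mu<1/2$, the paper uses $H^{s-\mu}(\R)\hookrightarrow L^q(\R)$ with $1/q=1/2-(s-\mu)$, together with H\"older and $\|\widetilde{r}^{-1}\phi(\cdot/\widetilde{r})\|_{L^{q'}}=\widetilde{r}^{-1/q}\|\phi\|_{L^{q'}}$. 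This yields the same powers $\widetilde{r}^{s-\mu-1/2}$ and $\widetilde{r}^{s-\mu-3/2}$ that you get from Plancherel and Cauchy--Schwarz against $\langle\xi\rangle^{s-\mu}$.
\end{itemize}
The paper's route is shorter. Yours avoids the $L^q$ embedding, exploits the vanishing moments of $\phi'$ and $(t\phi)'$, and makes the endpoint behaviour explicit. For term (i) with $s-\mu<1/2$ you need the computation with $\phi$ itself, which has no vanishing moment. It still works, because $\int_{|\xi|\lesssim 1/\widetilde{r}}\langle\xi\rangle^{-2(s-\mu)}\,d\xi\approx\widetilde{r}^{2(s-\mu)-1}$.

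That same integral is $\approx\log(1/\widetilde{r})$ when $s-\mu=1/2$, and neither of your proposed fixes recovers the statement there. The loss is unbounded as $\widetilde{r}\to 0$, so it cannot be absorbed into the constant. Lowering $s$ replaces $\widetilde{\mu}=\mu$ by something strictly smaller.

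In fact the stated bound with $\widetilde{\mu}=\mu$ fails in general at this endpoint. Take a coefficient with $\widetilde{\g}(\widetilde{z})\sim\log\log(1/|\widetilde{z}-\widetilde{z}_0|)$ near an interior point $\widetilde{z}_0$. It lies in $H^{1/2}(\R)$, yet $\mathcal{K}[\g](\widetilde{r},\widetilde{z}_0)$ grows like $\log\log(1/\widetilde{r})$, and so does $r\,|\n u_{E,\mu}|/r^{\mu}$ through the $r^{-1}\p_\t$ component. The paper's proof misses this by invoking $H^{1/2}(\R)\hookrightarrow L^\infty$.

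At equality the correct conclusion is the bound with any $\widetilde{\mu}<\mu$. This is harmless for the paper, since the key application has $s-\mu_1>1/2$ strictly.

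A minor point: $\widetilde{r}\lesssim 1$ does not follow from the boundedness of $\widetilde{\O}$. It comes from the cutoff $\eta(\widetilde{r})$, or from \eqref{eq:constraint2} together with $\d\gtrsim\rho$.
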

        \begin{proof}
            By a change of variables stretching $E$ into $\R$ and the embedding \eqref{eq:embedding-weighted}, we have $\widetilde{\g}\in H^{s-\mu}(\R)$. We compute $\p_z u_{E,\mu}$ via the chain rule:
            \begin{align*}
                \frac{\p}{\p z}u_{E,\mu}(r,\t,z)&= \left(\frac{\p \widetilde{z}}{\p z}\frac{\p}{\p \widetilde{z}}+\frac{\p \widetilde{r}}{\p z}\frac{\p}{\p \widetilde{r}}\right) u_{E,\mu}(r,\t,z)\\
                &= \frac{1}{\d(z)}\left(\frac{\p}{\p \widetilde{z}}\mathcal{K}[\g]\right)\widetilde{r}^{\mu}\vf(\t)-\frac{\d'(z)}{\d(z)} \left(\widetilde{r}^{\mu+1} \frac{\p}{\p \widetilde{r}}\mathcal{K}[\g]+\mu \widetilde{r}^{\mu}\mathcal{K}[\g]\right) \vf(\t)\\
                \frac{\p}{\p r} u_{E,\mu}(r,\t,z) &= \frac{\p \widetilde{r}}{\p r}\frac{\p}{\p \widetilde{r}} u_{E,\mu}(r,\t,z)=\frac{1}{\d(z)}\left(\frac{\p}{\p \widetilde{r}}\mathcal{K}[\g]\right)\widetilde{r}^{\mu}\vf(\t)+\frac{\mu}{\d(z)} \mathcal{K}[\g]\widetilde{r}^{\mu-1}\vf(\t),\\
                \frac{1}{r} \frac{\p}{\p \t} u_{E,\mu}(r,\t,z) &= \frac{1}{\d(z)}\mathcal{K}[\g]\widetilde{r}^{\mu-1}\vf'(\t).
            \end{align*}
            By the convolution structure of $\mathcal{K}$, we have:
            \begin{align*}
                \frac{\p}{\p \widetilde{z}}\mathcal{K}[\g](\widetilde{r},z)& = \frac{\p}{\p \widetilde{z}}\int_\R \frac{1}{\widetilde{r}}\phi(\frac{\widetilde{z}-t}{\widetilde{r}})\widetilde{\g}(t)dt=\frac{1}{\widetilde{r}}\int_\R \frac{1}{\widetilde{r}}\phi'(\frac{t}{\widetilde{r}})\widetilde{\g}(\widetilde{z}-t) dt,\\
                \frac{\p}{\p \widetilde{r}}\mathcal{K}[\g](\widetilde{r},z)&= -\int_\R \left(\frac{1}{\widetilde{r}^2}\phi(\frac{t}{\widetilde{r}})+\frac{t}{\widetilde{r}^3}\phi'(\frac{t}{\widetilde{r}})\right)\widetilde{\g}(\widetilde{z}-t)dt=-\frac{1}{\widetilde{r}}\int_\R \frac{1}{\widetilde{r}}(t\phi)'(\frac{t}{\widetilde{r}})\g(\widetilde{z}-t)dt.
            \end{align*}
            Note that $\phi',\;(t\phi)'\in \Sw(\R)$. We apply the Sobolev embedding $H^{s-\mu}(\R)\hookrightarrow L^q(\R)$ with $\frac{1}{q}=\frac{1}{2}-(s-\mu)$ if $s-\mu<1/2$ and $q=\infty$ if $s-\mu\geq 1/2$.
            \begin{itemize}
                \item Case $s-\mu <1/2$. Denoting by $q'$ the conjugate exponent of $q$, it follows from H\"older's inequality that
                \begin{align*}
                    \left|\mathcal{K}[\g](\widetilde{r},z)\right| &\leq \widetilde{r}^{-1/q} \|\phi\|_{L^{q'}(\R)} \|\widetilde{\g}\|_{L^q(\R)}=\widetilde{r}^{(s-\mu)-1/2} \|\phi\|_{L^{q'}(\R)} \|\widetilde{\g}\|_{L^q(\R)},\\
                    \left|\frac{\p}{\p \widetilde{z}}\mathcal{K}[\g](\widetilde{r},z)\right| &\leq \widetilde{r}^{(s-\mu)-3/2} \|\phi'\|_{L^{q'}(\R)} \|\widetilde{\g}\|_{L^q(\R)},\\
                    \left|\frac{\p}{\p \widetilde{r}}\mathcal{K}[\g](\widetilde{r},z)\right| &\leq \widetilde{r}^{(s-\mu)-3/2} \|(t\phi)'\|_{L^{q'}(\R)} \|\widetilde{\g}\|_{L^q(\R)}.
                \end{align*}
                \item Case $s-\mu\geq 1/2$. Using Young's inequality, we have
                \begin{align*}
                    |\mathcal{K}[\g](\widetilde{r},z)| &\leq \|\phi\|_{L^1(\R)} \|\widetilde{\g}\|_{L^\infty(\R)},\\
                    \left|\frac{\p}{\p \widetilde{z}}\mathcal{K}[\g](\widetilde{r},z)\right| &\leq \widetilde{r}^{-1} \|\phi'\|_{L^1(\R)} \|\widetilde{\g}\|_{L^\infty(\R)},\\
                    \left|\frac{\p}{\p \widetilde{r}}\mathcal{K}[\g](\widetilde{r},z)\right| &\leq \widetilde{r}^{-1} \|(t\phi)'\|_{L^1(\R)} \|\widetilde{\g}\|_{L^\infty(\R)}.
                \end{align*}
            \end{itemize}
            Define $\widetilde{\mu}:=\mu$ if $s-\mu\geq 1/2$ and $\widetilde{\mu}=s-1/2$ if $s-\mu<1/2$. Substituting these estimates into the expressions for derivatives of $u_{E,\mu}$, we obtain
            \begin{align*}
                \left|\frac{\p}{\p z} u_{E,\mu}(r,\t,z)\right| &\leq \|\widetilde{\g}\|_{L^q(\R)} \left(\frac{\widetilde{r}^{\widetilde{\mu}-1}}{\d(z)} \|\phi'\|_{L^{q'}(\R)} + \frac{|\d'(z)| \widetilde{r}^{\widetilde{\mu}}}{\d(z)}(\|(t\phi)'\|_{L^{q'}(\R)}+\mu \|\phi\|_{L^{q'}(\R)})\right).
            \end{align*}
            The same estimate holds for $\frac{\p}{\p r} u_{E,\mu}$.

            Note that the cut-off function $\eta$ and its support $\ell$ satisfy $\ell \leq 1/2$ and $\eta(\widetilde{r})=0$ if $\widetilde{r}\geq 2\ell$. Hence we may assume $\widetilde{r}\leq 1$. Also, $\d$ is equivalent to the distance to the endpoints of $E$, which implies $\d(z)\geq C_{\d}\rho$ for some constant $C_\d>0$. Then the estimates become
            \begin{align*}
                &\left|\frac{\p}{\p z} u_{E,\mu}(r,\t,z)\right|,\; \left|\frac{\p}{\p r} u_{E,\mu}(r,\t,z)\right|\\
                 &\leq \|\widetilde{\g}\|_{L^q(\R)} C_\d^{-\widetilde{\mu}}\left( \|\phi'\|_{L^{q'}(\R)} + |\d'(z)|(\|(t\phi)'\|_{L^{q'}(\R)}+\mu \|\phi\|_{L^{q'}(\R)})\right)\frac{r^{\widetilde{\mu}-1}}{\rho^{\widetilde{\mu}}},\\
                 & \frac{1}{r} \left|\frac{\p}{\p \t} u_{E,\mu}(r,\t,z)\right| \leq \mu\|\widetilde{\g}\|_{L^q(\R)} C_\d^{-\widetilde{\mu}} \|\phi\|_{L^{q'}(\R)}\frac{r^{\widetilde{\mu}-1}}{\rho^{\widetilde{\mu}}}.
            \end{align*}            
            
            Thus, the claim follows by combining these estimates with \eqref{eq:embedding-weighted}.
        \end{proof}

        \medskip

        \begin{lemma}\label{lemma:K-gamma}
            Let $1/2<\mu$ and let $\mu+1/2<s<\mu+1$. Then, there exists a constant $C'_{\phi,\mu}>0$ such that
            \begin{align*}
               \forall z\in E,\; \widetilde{r}>0,\quad |\mathcal{K}[\g](\widetilde{r},z)-\g(z)|\leq C'_{\phi,\mu} \widetilde{r}^{s-\mu-1/2}\|\g\|_{V_{-s}^{s-\mu}(E)}.
            \end{align*}
        \end{lemma}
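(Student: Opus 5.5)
We pass to the stretched variable and reduce the claim to a Hölder estimate for $\widetilde{\g}$. Write $\sigma:=s-\mu\in(\tfrac12,1)$, which holds since $\mu+\tfrac12<s<\mu+1$. By the embedding \eqref{eq:embedding-weighted}, $\widetilde{\g}\in H^{\sigma}(\R)$ with
\begin{align*}
\|\widetilde{\g}\|_{H^{\sigma}(\R)}\lesssim\|\g\|_{V_{-s}^{s-\mu}(E)}.
\end{align*}
Since $\tfrac12<\sigma<1$, the one-dimensional Sobolev embedding gives $H^{\sigma}(\R)\hookrightarrow \Cont^{0,\sigma-1/2}(\R)$, i.e.
\begin{align*}
|\widetilde{\g}(x)-\widetilde{\g}(y)|\leq C_\sigma\|\widetilde{\g}\|_{H^\sigma(\R)}|x-y|^{\sigma-1/2}.
\end{align*}
I would prove this directly via Fourier: write the difference as $\int\hat{\widetilde{\g}}(\xi)(e^{ix\xi}-e^{iy\xi})\,d\xi$, use $|e^{ix\xi}-e^{iy\xi}|\leq\min\{2,|\xi||x-y|\}$, and apply Cauchy--Schwarz against the weight $(1+|\xi|^2)^{\sigma}$. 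Splitting at $|\xi|=|x-y|^{-1}$ then yields exactly the exponent $\sigma-\tfrac12$.

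Next I express the difference through the kernel. Since $\int_\R\phi=1$ and $\g(z)=\widetilde{\g}(\widetilde{z})$,
\begin{align*}
\mathcal{K}[\g](\widetilde{r},z)-\g(z)=\int_\R\frac{1}{\widetilde{r}}\phi\Big(\frac{t}{\widetilde{r}}\Big)\big(\widetilde{\g}(\widetilde{z}-t)-\widetilde{\g}(\widetilde{z})\big)\,dt .
\end{align*}
Inserting the Hölder bound and substituting $t=\widetilde{r}w$ gives
\begin{align*}
|\mathcal{K}[\g](\widetilde{r},z)-\g(z)|\leq C_\sigma\|\widetilde{\g}\|_{H^\sigma}\,\widetilde{r}^{\,\sigma-1/2}\int_\R|\phi(w)||w|^{\sigma-1/2}\,dw .
\end{align*}
The last integral is finite because $\phi\in\Sw(\R)$. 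Setting $C'_{\phi,\mu}:=C_\sigma\int_\R|\phi(w)||w|^{\sigma-1/2}\,dw$ times the embedding constant gives the claim, valid for all $\widetilde{r}>0$ and $z\in E$.

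The main obstacle is bookkeeping rather than analysis. First, I must confirm that the embedding $V^{s-\mu}_{-s}(E)\hookrightarrow V^{s-\mu}_{-1/2}(E)\cong H^{s-\mu}(\R)$ holds for the fractional order $s-\mu$ defined by interpolation. This follows by interpolating the integer-order statements, since the weight comparison $\d^{-s+k}\geq c\,\d^{-1/2+k}$ holds when $\d$ is bounded, which is where $s\geq\tfrac12$ is used. Second, the upper bound $s<\mu+1$ matters: it keeps $\sigma<1$, so the Hölder exponent stays below $\tfrac12$ and no derivative of $\widetilde{\g}$ is needed. If $\sigma\geq 1$, one would instead have to exploit vanishing moments of $\phi$.
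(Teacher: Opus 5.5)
Your proof is correct. It shares the paper's skeleton: you subtract $\widetilde{\g}(\widetilde{z})$ using $\int_\R\phi=1$ and then invoke $H^{s-\mu}(\R)\hookrightarrow\Cont^{0,s-\mu-1/2}(\R)$. It differs in how you handle the tail, and that difference matters.

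The paper splits the integral at $|t|=\widetilde{r}$. It uses the H\"older bound only for $|t|\leq\widetilde{r}$. For $|t|>\widetilde{r}$ it uses $\|\widetilde{\g}\|_{L^\infty}$ together with $|x^2\phi(x)|\leq C'$, and claims a bound of order $\|\widetilde{\g}\|_{L^\infty}\widetilde{r}$. That step drops the prefactor $1/\widetilde{r}$. The correct pointwise bound is $\widetilde{r}^{-1}|\phi(t/\widetilde{r})|\leq C'\widetilde{r}\,t^{-2}$, whose integral over $|t|>\widetilde{r}$ is $2C'$, independent of $\widetilde{r}$. This is unavoidable: by scaling, $\int_{|t|>\widetilde{r}}\widetilde{r}^{-1}|\phi(t/\widetilde{r})|\,dt=\int_{|w|>1}|\phi(w)|\,dw$. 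So an $L^\infty$ bound on the tail can never produce decay in $\widetilde{r}$. Even the claimed $O(\widetilde{r})$ would be dominated by $\widetilde{r}^{s-\mu-1/2}$ only for $\widetilde{r}\leq1$, while the lemma is stated for all $\widetilde{r}>0$.

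Your route applies the H\"older bound on all of $\R$ and absorbs everything into the finite fractional moment $\int_\R|\phi(w)|\,|w|^{s-\mu-1/2}\,dw$. This avoids both problems and gives the estimate uniformly in $\widetilde{r}>0$. It is exactly the repair the paper's argument needs: use H\"older continuity on the tail as well.

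Your Fourier proof of the embedding is scale-invariant, so it does give the H\"older bound for all $|x-y|$. You need that, since you integrate over every $t\in\R$.

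One small correction to your closing remark: the hypothesis $s<\mu+1$ is not essential to your argument. The embedding $H^{\sigma}(\R)\hookrightarrow\Cont^{0,\sigma-1/2}(\R)$, including your Fourier computation, works for all $\tfrac12<\sigma<\tfrac32$. Vanishing moments of $\phi$ would only become relevant once the target exponent $\sigma-\tfrac12$ reaches $1$, not at $\sigma\geq1$.
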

        \begin{proof}
            Using the fact $\ds \int_\R \phi(t)dt=1$, we have 
            \begin{align*}
                \mathcal{K}[\g](\widetilde{r},z)-\widetilde{\g}(\widetilde{z})=\int_\R \frac{1}{\widetilde{r}} \phi(\frac{t}{\widetilde{r}})(\widetilde{\g}(\widetilde{z}-t)-\widetilde{\g}(\widetilde{z})) dt.
            \end{align*}
            We split this integral into two parts, one for $|t|\leq \widetilde{r}$ and the other for $|t|>\widetilde{r}$.

            Since $s-\mu>1/2$ and $\widetilde{g}\in H^{s-\mu}(\R)$, it follows from Sobolev embedding that $\widetilde{\g}\in \Cont^{s-\mu-1/2}(\R)$, and for some constant $C>0$ that
            \begin{align*}
                    \forall x,y\in\R,\quad |\widetilde{\g}(x)-\widetilde{\g}(y)| &\leq C \|\widetilde{\g}\|_{H^{s-\mu}(\R)}|x-y|^{s-\mu-1/2}.
            \end{align*}
            Then, we can estimate the first part of the integral as follows,
            \begin{align*}
                \left|\int_{|t|\leq \widetilde{r}} \frac{1}{\widetilde{r}} \phi\left(\frac{t}{\widetilde{r}}\right) (\widetilde{\g}(\widetilde{z}-t)-\widetilde{\g}(\widetilde{z})) dt \right| &\leq C\|\widetilde{\g}\|_{H^{s-\mu}(\R)}\widetilde{r}^{s-\mu-1/2}\|\phi\|_{L^1(\R)}.
            \end{align*}
            For the second part, since $\phi$ is rapidly decreasing, there exists $C'>0$ such that $\ds |x^2\phi(x)| \leq C'$. Then, we can estimate the second part of the integral as follows,
            \begin{align*}
                \left|\int_{|t|>\widetilde{r}} \frac{1}{\widetilde{r}} \phi\left(\frac{t}{\widetilde{r}}\right) (\widetilde{\g}(\widetilde{z}-t)-\widetilde{\g}(\widetilde{z})) dt \right| &\leq 2C'\|\widetilde{\g}\|_{L^\infty(\R)}\int_{\widetilde{r}}^{+\infty} \frac{\widetilde{r}^2}{t^2} dt \leq 2C'\|\widetilde{\g}\|_{L^\infty(\R)}\widetilde{r}.
            \end{align*}
            The claim follows by combining these estimates with the embedding \eqref{eq:embedding-weighted}.
        \end{proof}

    \bibliographystyle{abbrv}
    \bibliography{ref}

@article{Aspri2022,
   author = {Andrea Aspri and Elena Beretta and Elisa Francini and Sergio Vessella},
   doi = {10.1137/22M1480550},
   issn = {0036-1410},
   issue = {5},
   journal = {SIAM Journal on Mathematical Analysis},
   month = {10},
   pages = {5182-5222},
   publisher = {Society for Industrial and Applied Mathematics},
   title = {Lipschitz Stable Determination of Polyhedral Conductivity Inclusions from Local Boundary Measurements},
   volume = {54},
   url = {https://epubs.siam.org/doi/10.1137/22M1480550},
   year = {2022}
}

@article{Choulli2006,
   author = {M. Choulli},
   doi = {10.1080/00036810500499926},
   issn = {1563504X},
   issue = {6-7},
   journal = {Applicable Analysis},
   pages = {693-699},
   publisher = {Taylor \& Francis Group},
   title = {On the determination of an inhomogeneity in an elliptic equation},
   volume = {85},
   url = {https://www.tandfonline.com/doi/pdf/10.1080/00036810500499926},
   year = {2006}
}

@article{Choulli2003,
   author = {M Choulli},
   doi = {10.1088/0266-5611/19/4/307},
   issn = {0266-5611},
   issue = {4},
   journal = {Inverse Problems},
   month = {8},
   pages = {895-907},
   publisher = {IOP Publishing},
   title = {Local stability estimate for an inverse conductivity problem},
   volume = {19},
   url = {https://iopscience.iop.org/article/10.1088/0266-5611/19/4/307},
   year = {2003}
}

@article{Beretta2021,
   author = {Elena Beretta and Elisa Francini and Sergio Vessella},
   doi = {10.1137/20M1369609},
   issn = {0036-1410},
   issue = {4},
   journal = {SIAM Journal on Mathematical Analysis},
   month = {1},
   pages = {4303-4327},
   publisher = {Society for Industrial and Applied Mathematics},
   title = {Lipschitz Stable Determination of Polygonal Conductivity Inclusions in a Two-Dimensional Layered Medium from the Dirichlet-to-Neumann Map},
   volume = {53},
   url = {https://epubs.siam.org/doi/10.1137/20M1369609},
   year = {2021}
}

@article{Alessandrini1988,
   author = {Giovanni Alessandrini},
   doi = {10.1080/00036818808839730},
   issn = {1563504X},
   issue = {1-3},
   journal = {Applicable Analysis},
   pages = {153-172},
   publisher = {Gordon and Breach Science Publishers Ltd.},
   title = {Stable determination of conductivity by boundary measurements},
   volume = {27},
   url = {https://www.tandfonline.com/doi/abs/10.1080/00036818808839730},
   year = {1988}
}

@article{Leitzke2020,
   author = {Juliana Padilha Leitzke and Hubert Zangl},
   doi = {10.3390/S20185160},
   issn = {1424-8220},
   issue = {18},
   journal = {Sensors 2020, Vol. 20, Page 5160},
   month = {9},
   pages = {5160},
   pmid = {32927685},
   publisher = {Multidisciplinary Digital Publishing Institute},
   title = {A Review on Electrical Impedance Tomography Spectroscopy},
   volume = {20},
   url = {https://www.mdpi.com/1424-8220/20/18/5160/htm https://www.mdpi.com/1424-8220/20/18/5160},
   year = {2020}
}

@article{Martins2019,
   author = {Thiago de Castro Martins and André Kubagawa Sato and Fernando Silva de Moura and Erick Dario León Bueno de Camargo and Olavo Luppi Silva and Talles Batista Rattis Santos and Zhanqi Zhao and Knut Möeller and Marcelo Brito Passos Amato and Jennifer L. Mueller and Raul Gonzalez Lima and Marcos de Sales Guerra Tsuzuki},
   doi = {10.1016/J.ARCONTROL.2019.05.002},
   issn = {1367-5788},
   journal = {Annual Reviews in Control},
   month = {1},
   pages = {442-471},
   publisher = {Pergamon},
   title = {A review of electrical impedance tomography in lung applications: Theory and algorithms for absolute images},
   volume = {48},
   url = {https://www.sciencedirect.com/science/article/abs/pii/S1367578819300173},
   year = {2019}
}

@article{Mansouri2021,
   author = {Sofiene Mansouri and Yousef Alharbi and Fatma Haddad and Souhir Chabcoub and Anwar Alshrouf and Amr A Abd-Elghany},
   doi = {10.2478/joeb-2021-0007},
   journal = {J Electr Bioimp},
   pages = {50-62},
   title = {Electrical Impedance tomography-recent applications and developments},
   volume = {12},
   url = {http://creativecommons.org/licenses/by/4.0/},
   year = {2021}
}

@article{Uhlmann2009,
   author = {G. Uhlmann},
   doi = {10.1088/0266-5611/25/12/123011},
   issn = {0266-5611},
   issue = {12},
   journal = {Inverse Problems},
   month = {12},
   pages = {123011},
   publisher = {IOP Publishing},
   title = {Electrical impedance tomography and Calderón's problem},
   volume = {25},
   url = {https://iopscience.iop.org/article/10.1088/0266-5611/25/12/123011 https://iopscience.iop.org/article/10.1088/0266-5611/25/12/123011/meta},
   year = {2009}
}

@article{Costabel1999,
   author = {Martin Costabel and Monique Dauge and Serge Nicaise},
   doi = {10.1051/m2an:1999155},
   issn = {0764-583X},
   issue = {3},
   journal = {ESAIM: Mathematical Modelling and Numerical Analysis},
   month = {5},
   pages = {627-649},
   title = {Singularities of Maxwell interface problems},
   volume = {33},
   url = {http://www.esaim-m2an.org/10.1051/m2an:1999155},
   year = {1999}
}

@article{Costabel1993i,
   author = {Martin Costabel and Monique Dauge},
   doi = {10.1017/S0308210500021272},
   issn = {14737124},
   issue = {1},
   journal = {Proceedings of the Royal Society of Edinburgh Section A: Mathematics},
   pages = {109-155},
   publisher = {Royal Society of Edinburgh Scotland Foundation},
   title = {General edge asymptotics of solutions of second-order elliptic boundary value problems I},
   volume = {123},
   url = {https://www.cambridge.org/core/journals/proceedings-of-the-royal-society-of-edinburgh-section-a-mathematics/article/abs/general-edge-asymptotics-of-solutions-of-secondorder-elliptic-boundary-value-problems-i/54DBE17840E777FA41DEB8CE7F8D4FB2},
   year = {1993}
}

@article{Korevaar1994,
   author = {J. Korevaar and J. L. H. Meyers},
   doi = {10.1112/blms/26.4.353},
   issn = {00246093},
   issue = {4},
   journal = {Bulletin of the London Mathematical Society},
   month = {7},
   pages = {353-362},
   title = {Logarithmic Convexity for Supremum Norms of Harmonic Functions},
   volume = {26},
   url = {http://doi.wiley.com/10.1112/blms/26.4.353},
   year = {1994}
}

@article{Hanke2024,
   author = {Martin Hanke},
   doi = {10.1088/1361-6420/ad76d4},
   issn = {0266-5611},
   journal = {Inverse Problems},
   month = {10},
   publisher = {IOP Publishing},
   title = {Lipschitz stability of an inverse conductivity problem with two Cauchy data pairs},
   year = {2024}
}

@book{Gilbarg-Trudinger,
   author = {David Gilbarg and Neil S. Trudinger},
   city = {Berlin, Heidelberg},
   doi = {10.1007/978-3-642-61798-0},
   isbn = {978-3-540-41160-4},
   publisher = {Springer Berlin Heidelberg},
   title = {Elliptic Partial Differential Equations of Second Order},
   volume = {224},
   url = {http://link.springer.com/10.1007/978-3-642-61798-0},
   year = {2001}
}

@article{Liu2022,
   author = {Hongyu Liu and Chun-Hsiang Tsou},
   doi = {10.1007/s00526-022-02211-w},
   issn = {0944-2669},
   issue = {3},
   journal = {Calculus of Variations and Partial Differential Equations},
   month = {6},
   pages = {91},
   title = {Stable determination by a single measurement, scattering bound and regularity of transmission eigenfunctions},
   volume = {61},
   url = {http://arxiv.org/abs/2108.01557 https://link.springer.com/10.1007/s00526-022-02211-w},
   year = {2022}
}

@article{Friedman1989,
   author = {Avner Friedman and Victor Isakov},
   issue = {3},
   journal = {Indiana University Mathematics Journal},
   pages = {563-579},
   title = {On The Uniqueness in The Inverse Conductivity Problem with One Measurement},
   volume = {38},
   url = {https://www.jstor.org/stable/24895401},
   year = {1989}
}

@article{Cakoni2021,
   author = {Fioralba Cakoni and Jingni Xiao},
   doi = {10.1080/03605302.2020.1843489},
   issn = {0360-5302},
   issue = {3},
   journal = {Communications in Partial Differential Equations},
   month = {3},
   pages = {413-441},
   title = {On corner scattering for operators of divergence form and applications to inverse scattering},
   volume = {46},
   url = {http://arxiv.org/abs/1905.02558 https://www.tandfonline.com/doi/full/10.1080/03605302.2020.1843489},
   year = {2021}
}

@article{Rempel1989,
   author = {Stephan Rempel},
   doi = {10.1007/BF01196880},
   issn = {0378620X},
   issue = {6},
   journal = {Integral Equations and Operator Theory},
   month = {11},
   pages = {835-854},
   publisher = {Birkhäuser-Verlag},
   title = {Corner singularity for transmission problems in three dimensions},
   volume = {12},
   url = {https://link.springer.com/article/10.1007/BF01196880},
   year = {1989}
}

@article{Dauge1999,
   author = {Monique Dauge},
   doi = {10.1051/proc:1999044},
   editor = {Robert Boyer and Elisabeth Croc and Yves Dermenjian and Florence Hubert and Elaine Pratt},
   issn = {1270-900X},
   journal = {ESAIM: Proceedings},
   month = {8},
   pages = {19-40},
   publisher = {EDP Sciences},
   title = {Singularities of Corner Problems and Problems of Corner Singularities},
   volume = {6},
   url = {http://www.esaim-proc.org/10.1051/proc:1999044},
   year = {1999}
}

@article{Calderon1980,
   author = {Alberto P. Calder\'on},
   issue = {2-3},
   journal = {Comput. Appl. Math.},
   pages = {133-138},
   title = {On an inverse boundary value problem},
   volume = {25},
   url = {https://www.scielo.br/scielo.php?pid=S1807-03022006000200002&script=sci_arttext&tlng=es},
   year = {2006}
}

@article{Sylvester1987,
   author = {John Sylvester and Gunther Uhlmann},
   doi = {10.2307/1971291},
   issn = {0003486X},
   issue = {1},
   journal = {The Annals of Mathematics},
   month = {1},
   pages = {153},
   publisher = {JSTOR},
   title = {A Global Uniqueness Theorem for an Inverse Boundary Value Problem},
   volume = {125},
   url = {https://www.jstor.org/stable/1971291},
   year = {1987}
}

@article{Blsten2017,
   author = {Emilia L. K. Blåsten and Hongyu Liu},
   doi = {10.1016/j.jfa.2017.08.023},
   issn = {10960783},
   issue = {11},
   journal = {Journal of Functional Analysis},
   month = {12},
   pages = {3616-3632},
   publisher = {Academic Press Inc.},
   title = {On vanishing near corners of transmission eigenfunctions},
   volume = {273},
   url = {https://linkinghub.elsevier.com/retrieve/pii/S0022123617303373},
   year = {2017}
}

@article{Elschner2015,
   author = {Johannes Elschner and Guanghui Hu},
   doi = {10.1088/0266-5611/31/1/015003},
   issn = {13616420},
   issue = {1},
   journal = {Inverse Problems},
   month = {1},
   publisher = {Institute of Physics Publishing},
   title = {Corners and edges always scatter},
   volume = {31},
   year = {2015}
}

@article{Kondratev1967,
   author = {V. A. Kondrat'ev},
   issn = {01342452},
   journal = {Tr. Mosk. Mat. Obs.},
   month = {10},
   pages = {209-292},
   title = {Boundary value problems for elliptic equations in domains with conical or angular points},
   volume = {16},
   url = {http://www.mathnet.ru/php/archive.phtml?wshow=paper&jrnid=mmo&paperid=186&option_lang=eng},
   year = {1967}
}

@article{Dauge1989Oblique,
   author = {Monique Dauge and Serge. Nicaise},
   doi = {10.1080/03605308908820649},
   issn = {15324133},
   issue = {8-9},
   journal = {Communications in Partial Differential Equations},
   pages = {1147-1192},
   title = {Oblique Derivative and Interface Problems on Polygonal Domains and Networks},
   volume = {14},
   year = {1989}
}

@article{Nicaise1994ii,
   author = {Serge. Nicaise and Anna-Margarete Sändig},
   doi = {10.1002/mma.1670170603},
   issn = {10991476},
   issue = {6},
   journal = {Mathematical Methods in the Applied Sciences},
   pages = {431-450},
   title = {General interface problems—II},
   volume = {17},
   year = {1994}
}

@article{Nicaise1994i,
   author = {Serge. Nicaise and Anna-Margarete Sändig},
   doi = {10.1002/mma.1670170602},
   issn = {10991476},
   issue = {6},
   journal = {Mathematical Methods in the Applied Sciences},
   pages = {395-429},
   title = {General interface problems—I},
   volume = {17},
   year = {1994}
}

@article{Beretta2022,
   author = {Elena Beretta and Elisa Francini},
   doi = {10.1080/00036811.2020.1775819},
   issn = {0003-6811},
   issue = {10},
   journal = {Applicable Analysis},
   month = {7},
   pages = {3536-3549},
   publisher = {Taylor and Francis Ltd.},
   title = {Global Lipschitz stability estimates for polygonal conductivity inclusions from boundary measurements},
   volume = {101},
   url = {https://www.tandfonline.com/doi/full/10.1080/00036811.2020.1775819},
   year = {2022}
}

@article{Kellogg1971,
   author = {R.B. Kellogg},
   doi = {10.1016/b978-0-12-358502-8.50015-3},
   journal = {Numerical Solution of Partial Differential Equations–II},
   month = {1},
   pages = {351-400},
   publisher = {Elsevier},
   title = {Singularities in interface problems},
   year = {1971}
}

@article{Barcelo1994,
   author = {Bartolome Barcelo and Eugene Fabes and Jin-Keun Seo},
   doi = {10.2307/2160858},
   issn = {00029939},
   issue = {1},
   journal = {Proceedings of the American Mathematical Society},
   month = {9},
   pages = {183},
   title = {The Inverse Conductivity Problem with One Measurement: Uniqueness for Convex Polyhedra},
   volume = {122},
   url = {https://www.jstor.org/stable/2160858?origin=crossref},
   year = {1994}
}

@article{Bellout1992,
   author = {Hamid Bellout and Avner Friedman and Victor Isakov},
   doi = {10.2307/2154032},
   issn = {00029947},
   issue = {1},
   journal = {Transactions of the American Mathematical Society},
   month = {7},
   pages = {271-296},
   title = {Stability for an Inverse Problem in Potential Theory},
   volume = {332},
   url = {https://www.jstor.org/stable/2154032?origin=crossref},
   year = {1992}
}

@article{Alessandrini1992,
   author = {Giovanni Alessandrini and Rolando Magnanini},
   issue = {4},
   journal = {Annali della Scuola Normale Superiore di Pisa - Classe di Scienze},
   pages = {567-589},
   title = {The Index of Isolated Critical Points and Solutions of Elliptic Equations in the Plane},
   volume = {Ser. 4, 19},
   url = {http://www.numdam.org/item/ASNSP_1992_4_19_4_567_0},
   year = {1992}
}

@article{Kang1997,
   author = {Hyeonbae Kang and Jin Keun Seo and Dongwoo Sheen},
   doi = {10.1137/S0036141096299375},
   issn = {0036-1410},
   issue = {6},
   journal = {SIAM Journal on Mathematical Analysis},
   month = {11},
   pages = {1389-1405},
   title = {The Inverse Conductivity Problem with One Measurement: Stability and Estimation of Size},
   volume = {28},
   url = {http://epubs.siam.org/doi/10.1137/S0036141096299375},
   year = {1997}
}

@article{Seo1995,
   author = {Jin-Keun Seo},
   doi = {10.1007/s00041-001-4030-7},
   issn = {1069-5869},
   issue = {3},
   journal = {Journal of Fourier Analysis and Applications},
   month = {6},
   pages = {227-235},
   title = {On the Uniqueness in the Inverse Conductivity Problem},
   volume = {2},
   url = {http://link.springer.com/10.1007/s00041-001-4030-7},
   year = {1995}
}

@misc{Kang1999ball,
   author = {Hyeonbae Kang and Jin-Keun Seo},
   doi = {10.1137/S0036139997324595},
   issn = {00361399},
   issue = {5},
   journal = {SIAM Journal on Applied Mathematics},
   pages = {1533-1539},
   title = {Inverse conductivity problem with one measurement: Uniqueness of balls in R3},
   volume = {59},
   year = {1999}
}

@misc{Fabes1999,
   author = {Eugene Fabes and Hyeonbae Kang and Jin-Keun Seo},
   doi = {10.1137/S0036141097324958},
   issn = {00361410},
   issue = {4},
   journal = {SIAM Journal on Mathematical Analysis},
   pages = {699-720},
   title = {Inverse conductivity problem with one measurement: Error estimates and approximate identification for perturbed disks},
   volume = {30},
   year = {1999}
}

@article{Alessandrini1994,
   author = {G. Alessandrini and R. Magnanini},
   doi = {10.1137/S0036141093249080},
   issn = {0036-1410},
   issue = {5},
   journal = {SIAM Journal on Mathematical Analysis},
   month = {9},
   pages = {1259-1268},
   title = {Elliptic Equations in Divergence Form, Geometric Critical Points of Solutions, and Stekloff Eigenfunctions},
   volume = {25},
   url = {http://epubs.siam.org/doi/10.1137/S0036141093249080},
   year = {1994}
}

@book{Dauge1988,
   author = {Monique Dauge},
   city = {Berlin, Heidelberg},
   doi = {10.1007/BFb0086682},
   isbn = {978-3-540-50169-5},
   publisher = {Springer Berlin Heidelberg},
   title = {Elliptic Boundary Value Problems on Corner Domains},
   volume = {1341},
   url = {http://link.springer.com/10.1007/BFb0086682},
   year = {1988}
}

@book{Grisvard1985,
   author = {Pierre Grisvard},
   city = {Boston},
   isbn = {0-273-08647-2},
   publisher = {Pitman Advanced Publishing Program},
   title = {Elliptic Problems in Nonsmooth Domains},
   year = {1985}
}

@article{moi5,
   author = {Hongyu Liu and Chun-Hsiang Tsou},
   doi = {10.1088/1361-6420/ab9d6b},
   issn = {0266-5611},
   issue = {8},
   journal = {Inverse Problems},
   month = {8},
   pages = {085010},
   title = {Stable determination of polygonal inclusions in Calder\'on's problem by a single partial boundary measurement},
   volume = {36},
   url = {http://arxiv.org/abs/1902.04462 https://iopscience.iop.org/article/10.1088/1361-6420/ab9d6b},
   year = {2020}
}

@article{moi6,
   author = {Hongyu Liu and Chun-Hsiang Tsou and Wei Yang},
   doi = {10.1088/1361-6420/abefeb},
   issn = {0266-5611},
   issue = {5},
   journal = {Inverse Problems},
   month = {5},
   pages = {055005},
   title = {On Calderón’s inverse inclusion problem with smooth shapes by a single partial boundary measurement},
   volume = {37},
   url = {http://arxiv.org/abs/2006.10586 https://iopscience.iop.org/article/10.1088/1361-6420/abefeb},
   year = {2021}
}

@article{moi2,
   author = {Faouzi Triki and Chun-Hsiang Tsou},
   doi = {10.1016/j.jde.2020.02.028},
   issn = {10902732},
   issue = {4},
   journal = {Journal of Differential Equations},
   pages = {3259-3281},
   publisher = {Elsevier Inc.},
   title = {Inverse inclusion problem: A stable method to determine disks},
   volume = {269},
   url = {https://doi.org/10.1016/j.jde.2020.02.028},
   year = {2020}
}

@article{Alessandrini2009,
   author = {Giovanni Alessandrini and Luca Rondi and Edi Rosset and Sergio Vessella},
   doi = {10.1088/0266-5611/25/12/123004},
   issn = {0266-5611},
   issue = {12},
   journal = {Inverse Problems},
   month = {12},
   pages = {123004},
   title = {The stability for the Cauchy problem for elliptic equations},
   volume = {25},
   url = {https://iopscience.iop.org/article/10.1088/0266-5611/25/12/123004},
   year = {2009}
}

@article{Blsten2020,
   author = {Emilia L. K. Blåsten and Hongyu Liu},
   doi = {10.1088/1361-6420/ab958f},
   issn = {0266-5611},
   issue = {8},
   journal = {Inverse Problems},
   month = {8},
   pages = {085005},
   title = {Recovering piecewise constant refractive indices by a single far-field pattern},
   volume = {36},
   url = {https://iopscience.iop.org/article/10.1088/1361-6420/ab958f},
   year = {2020}
}

@article{Blsten2014,
   author = {Emilia L. K. Blåsten and Lassi Päivärinta and John Sylvester},
   doi = {10.1007/s00220-014-2030-0},
   issn = {14320916},
   issue = {2},
   journal = {Communications in Mathematical Physics},
   pages = {725-753},
   title = {Corners Always Scatter},
   volume = {331},
   year = {2014}
}

@book{Mazya2010,
   author = {Vladimir Maz'ya and J\"urgen Rossmann},
   city = {Providence, Rhode Island},
   doi = {10.1090/surv/162},
   isbn = {9780821849835},
   month = {4},
   publisher = {American Mathematical Society},
   title = {Elliptic Equations in Polyhedral Domains},
   volume = {162},
   url = {http://www.ams.org/surv/162},
   year = {2010}
}

@article{Blasten2016,
   author = {Emilia L. K. Blåsten and Hongyu Liu},
   doi = {10.1512/iumj.2021.70.8411},
   issn = {0022-2518},
   issue = {3},
   journal = {Indiana University Mathematics Journal},
   pages = {907-947},
   title = {On corners scattering stably and stable shape determination by a single far-field pattern},
   volume = {70},
   url = {http://arxiv.org/abs/1611.03647 http://www.iumj.indiana.edu/IUMJ/fulltext.php?artid=8411&year=2021&volume=70},
   year = {2021}
}

@book{Kozlov2002,
   author = {V. Kozlov and Vladimir Maz'ya and J\"urgen Rossmann},
   city = {Providence, Rhode Island},
   doi = {10.1090/surv/052},
   isbn = {9780821807545},
   month = {11},
   publisher = {American Mathematical Society},
   title = {Elliptic Boundary Value Problems in Domains with Point Singularities},
   volume = {52},
   url = {http://www.ams.org/surv/052},
   year = {2002}
}

@article{Nicaise1999,
   author = {Serge. Nicaise and Anna-Margarete Sändig},
   doi = {10.1142/S0218202599000403},
   issn = {0218-2025},
   issue = {06},
   journal = {Mathematical Models and Methods in Applied Sciences},
   month = {8},
   pages = {855-898},
   title = {Transmission problems for the Laplace and elastivity operators: Regularity and boundary integral formulation},
   volume = {09},
   url = {https://www.worldscientific.com/doi/abs/10.1142/S0218202599000403},
   year = {1999}
}

@article{Nicaise1990,
   author = {Serge. Nicaise},
   doi = {10.1080/03605309908820734},
   isbn = {0360530990},
   issn = {0360-5302},
   issue = {10},
   journal = {Communications in Partial Differential Equations},
   month = {1},
   pages = {1475-1508},
   title = {Polygonal interface problems:higher regularity results},
   volume = {15},
   url = {http://www.tandfonline.com/doi/abs/10.1080/03605309908820734},
   year = {1990}
}

@book{Ammari2008,
   author = {Habib Ammari},
   city = {Berlin, Heidelberg},
   doi = {10.1007/978-3-540-79553-7},
   isbn = {978-3-540-79552-0},
   journal = {An Introduction to Mathematics of Emerging Biomedical Imaging},
   publisher = {Springer Berlin Heidelberg},
   title = {An Introduction to Mathematics of Emerging Biomedical Imaging},
   volume = {62},
   url = {http://link.springer.com/10.1007/978-3-540-79553-7},
   year = {2008}
}
    
\end{document}